\newtheorem{thmx}{Theorem}
\providecommand\@dotsep{5}
\newcommand{\bbC}{\mathbb C}
\newcommand{\bbR}{\mathbb R}
\newcommand{\bbZ}{\mathbb Z}
\newcommand{\cA}{\mathcal A}
\newcommand{\cC}{\mathcal C}
\newcommand{\cF}{\mathcal F}
\newcommand{\cG}{\mathcal G}
\newcommand{\cU}{\mathcal U}
\newcommand{\fri}{\mathfrak{i}}
\newcommand{\frc}{\mathfrak{c}}
\newcommand{\frk}{\mathfrak{k}}
\newcommand{\ufld}{\text{ufld}}
\newcommand{\catname}[1]{{\mathsf{#1}}}
\newcommand{\TLJ}{\catname{TLJ}}
\renewcommand{\mod}{\catname{mod}}
\renewcommand{\vec}{\catname{vec}}
\newcommand{\rep}{\catname{rep}}
\DeclareMathOperator{\cEnd}{\catname{End}}
\newcommand{\<}{\langle}
\renewcommand{\>}{\rangle}
\DeclareMathOperator{\id}{id}
\DeclareMathOperator{\Z}{\mathbb Z}
\newcommand{\1}{\mathds{1}}
\DeclareMathOperator{\ra}{\rightarrow}
\newcommand{\xra}{\xrightarrow}
\DeclareMathOperator{\Hom}{Hom}
\DeclareMathOperator{\img}{im}
\DeclareMathOperator{\coker}{coker}
\DeclareMathOperator{\Irr}{Irr}
\DeclareMathOperator{\Path}{Path}
\declaretheorem[numberwithin=section]{theorem}
\declaretheorem[sibling=theorem]{lemma}
\declaretheorem[sibling=theorem]{proposition}
\declaretheorem[sibling=theorem, style=remark]{remark}
\declaretheorem[sibling=theorem, style=definition]{definition}
\declaretheorem[sibling=theorem, style=definition]{example}
\newtheorem*{theorem*}{Theorem}
\crefname{lemma}{Lemma}{Lemma}
  \crefname{corollary}{Corollary}{Corollary}
  \crefname{theorem}{Theorem}{Theorem}
  \crefname{definition}{Definition}{Definition}
   \crefname{proposition}{Proposition}{Proposition}
 \crefname{section}{Section}{Section} 
   \crefname{construction}{Construction}{Construction}
   \crefname{generalization}{Generalization}{Generalization}
  \crefname{construction}{Construction}{Construction}
  \crefname{notation}{Notation}{Notation}
   \crefname{example}{Example}{Example}
  \crefname{remark}{Remark}{Remark}
  \crefname{fact}{Fact}{Fact}
  \crefname{conjecture}{Conjecture}{Conjecture}
  \crefname{motivation}{Motivation}{Motivation}  
  \crefname{figure}{Figure}{Figure}
\newcommand{\nPi}[1][n]{{ \overset{\scriptscriptstyle #1}{\Pi}}}
\newcommand{\bigboxtimes} {
    {\mathrel{\raisebox{-.6ex}
    	{$\mathlarger{\mathlarger{\mathlarger{\boxtimes}}} $}
    	}
    }
}
\newcommand{\bigboxtimesp}[2] {
    \underset{#2}{\overset{#1}{
    	{\mathrel{
    		\raisebox{-.6ex} 
    		{$\mathlarger{\mathlarger{\mathlarger{\mathlarger{\boxtimes}}}} $}
    		}
    	}
    }
    }
}
\newcommand{\tri}{%
\mathrel{
\begin{tikz}[line cap=round, line join=round]%
  \draw[-] (0, 1ex) -- (0,2ex)
(0, 1ex) -- (1ex,0ex)
(0, 1ex) -- (-1ex,0ex);
\end{tikz}%
}
}
\newcommand{\itri}{%
\begin{tikz}[line cap=round, line join=round]%
  \draw[-] (0, 1ex) -- (0,0ex)
(0, 1ex) -- (1ex,2ex)
(0, 1ex) -- (-1ex,2ex);
\end{tikz}%
}
\newcommand{\xtwoheadrightarrow}[2][]{%
  \xrightarrow[#1]{#2}\mathrel{\mkern-14mu}\rightarrow
}
\renewcommand{\comment}[1]{}
\begin{document}
\title[Coxeter quivers in fusion categories]{Coxeter quiver representations in fusion categories and Gabriel's theorem}
\author{Edmund Heng}
\address{Institut des Hautes Etudes Scientifiques (IHES). Le Bois-Marie, 35, route de Chartres, 91440 Bures-sur-Yvette (France)}
\email{heng@ihes.fr}

\keywords{Quivers; Fusion categories; Root systems; Coxeter--Dynkin diagrams; Quantum groups}

\subjclass{16G20, 18M20, 17B22}

\begin{abstract}
We introduce a notion of representation for a class of generalised quivers known as \emph{Coxeter quivers}.
These representations are built using fusion categories associated to $U_q(\mathfrak{s}\mathfrak{l}_2)$ at roots of unity and we show that many of the classical results on representations of quivers can be generalised to this setting.
Namely, we prove a generalised Gabriel's theorem for Coxeter quivers that encompasses all \emph{Coxeter--Dynkin diagrams} -- including the non-crystallographic types $H$ and $I$.
Moreover, a similar relation between reflection functors and Coxeter theory is used to show that the indecomposable representations correspond bijectively to the positive roots of Coxeter root systems over fusion rings.
\end{abstract}

\maketitle

\section*{Introduction}
Recall the following well-celebrated result of Gabriel on the classification of finite-type quivers:
\begin{theorem*}[Gabriel's theorem \cite{Gab72}]
A quiver $\check{Q}$ has finitely-many isomorphism classes of indecomposable representations (finite-type) if and only if its underlying graph $\overline{\check{Q}}$ is a (finite, disjoint) union of ADE Dynkin diagrams: $A_n, D_n, E_6, E_7, E_8$.
Moreover, in these cases the isomorphism classes of indecomposable representations correspond bijectively to the positive roots of the root system associated to its underlying graph $\overline{\check{Q}}$ by taking the dimension vector.
\end{theorem*}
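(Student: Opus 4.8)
\section*{Proof proposal}

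The plan is to split the statement into the \emph{finiteness dichotomy} (finite type $\iff$ ADE) and the \emph{root correspondence} in the finite-type case, and to reduce at once to a connected quiver $\check Q$, the general statement following by taking direct sums over connected components. Fix an algebraically closed field $k$; for a representation $M$ write $\underline{\dim}\,M\in\bbZ_{\geq 0}^{\check Q_0}$ and set $q(\mathbf d)=\sum_i d_i^2-\sum_{a\colon i\to j}d_id_j$ for the Tits form, which depends only on $\overline{\check Q}$ since reversing an arrow leaves $d_id_j$ unchanged. The one homological input I would record first is that the path algebra is hereditary, so every $M$ has a two-term projective resolution; taking Euler characteristics of $\Hom(-,N)$ gives $\langle\underline{\dim}\,M,\underline{\dim}\,N\rangle=\dim_k\Hom(M,N)-\dim_k\operatorname{Ext}^1(M,N)$, hence $q(\underline{\dim}\,M)=\dim_k\operatorname{End}(M)-\dim_k\operatorname{Ext}^1(M,M)$.

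For the implication \emph{finite type $\Rightarrow$ ADE}, I would work with the representation variety $R(\check Q,\mathbf d)=\bigoplus_{a\colon i\to j}\Hom_k(k^{d_i},k^{d_j})$ together with the $G_{\mathbf d}=\prod_i\mathrm{GL}_{d_i}(k)$-action, whose orbits are exactly the isomorphism classes of representations with dimension vector $\mathbf d$. If $\check Q$ is finite type then, by Krull--Schmidt, $R(\check Q,\mathbf d)$ has only finitely many orbits for each $\mathbf d$, so it has a dense orbit $O_M$; since $\dim O_M=\dim G_{\mathbf d}-\dim\Stab_{G_{\mathbf d}}(M)$ and $\Stab_{G_{\mathbf d}}(M)=\Aut(M)\supseteq k^\times\cdot\id$, density forces $q(\mathbf d)=\dim G_{\mathbf d}-\dim R(\check Q,\mathbf d)=\dim\Aut(M)\geq 1$ for every $\mathbf d\in\bbZ_{\geq 0}^{\check Q_0}\setminus\{0\}$. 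A quadratic form whose symmetrization has non-positive off-diagonal entries and which is positive on the positive orthant is positive definite, and the classification of connected graphs with positive-definite Tits form then forces $\overline{\check Q}$ to be a Dynkin diagram of type $A_n$, $D_n$, $E_6$, $E_7$ or $E_8$.

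For \emph{ADE $\Rightarrow$ finite type} together with the root correspondence, I would run the Bernstein--Gelfand--Ponomarev reflection-functor argument. One defines the reflection functors $\Phi_i^{\pm}$ at a sink, resp.\ source, $i$; their formal properties are that $\Phi_i^-\Phi_i^+M\cong M$ for every indecomposable $M\not\cong S_i$ (while $\Phi_i^+S_i=0$), and that on dimension vectors $\Phi_i^{\pm}$ realizes the simple reflection $s_i$ whenever the output is nonzero. Since $\overline{\check Q}$ is a tree there are no oriented cycles, so a $(+)$-admissible ordering $1,\dots,n$ of the vertices exists, and $\Phi^+=\Phi_n^+\cdots\Phi_1^+$ is a Coxeter functor realizing the Coxeter transformation $c=s_n\cdots s_1$ on $\bbZ^{\check Q_0}$. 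Because $\overline{\check Q}$ is Dynkin, the Weyl group $W$ is finite, so $c$ has finite order; the standard consequence is that for every indecomposable $M$ some iterate of the partial Coxeter functors annihilates $M$, and tracing back the last nonzero step exhibits $M\cong\Phi_{i_1}^-\cdots\Phi_{i_r}^-S_{i}$ for an explicit sequence. Hence $\underline{\dim}\,M=s_{i_1}\cdots s_{i_r}(e_i)$ is a positive root, and $M$ is the \emph{unique} indecomposable with that dimension vector, since the sequence and $S_i$ are recovered from $\underline{\dim}\,M$. Conversely, finiteness of $W$ lets one write any positive root $\alpha$ as such a product applied to a fundamental root, and applying the corresponding inverse reflection functors to the simple $S_i$ yields an indecomposable with dimension vector $\alpha$; this establishes the bijection and shows there are precisely as many indecomposables as positive roots, so $\check Q$ is finite type.

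I expect the main obstacle to be the last step, specifically the ``preprojectivity'' input that \emph{every} indecomposable over a Dynkin quiver is reached from a simple by reflection functors, i.e.\ that iterating the Coxeter functor eventually annihilates it. This rests on the combinatorial fact, particular to the finite-$W$ case, that the partial dimension vectors produced by repeated application of $c$ cannot remain positive indefinitely, together with careful bookkeeping at sinks and sources when passing between orientations. By contrast, the positive-definiteness argument and the classification of Dynkin forms in the second step are classical and comparatively routine. An alternative for the last step that avoids reflection functors is to show, via upper semicontinuity on $R(\check Q,\mathbf d)$, that indecomposables satisfy $q(\underline{\dim}\,M)=1$ and that every positive root supports a unique indecomposable; this trades the combinatorial bookkeeping for geometric input of a similar level of care.
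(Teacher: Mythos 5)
This statement is classical Gabriel's theorem, which the paper does not prove: it is recalled in the introduction with a citation to \cite{Gab72} and used later as a black box (in the proof of \cref{thm: gabriel classification}), so there is no in-paper argument to compare against line by line. Your sketch is the standard and correct proof: the Tits-form/representation-variety argument (finitely many orbits forces a dense orbit, the dimension count gives $q(\mathbf d)\geq 1$ on nonzero nonnegative vectors, hence positive definiteness and the ADE classification) for the ``only if'' direction, and the Bernstein--Gelfand--Ponomarev reflection functors \cite{BGP_73} for the converse and the root correspondence. The two small points you should make sure are nailed down are exactly the ones you flag: the passage from $q\geq 1$ on the nonnegative integer orthant to positive definiteness (via $q(|v|)\leq q(v)$ using the non-positive off-diagonal coefficients, plus homogeneity), and the fact that iterating the Coxeter transformation eventually destroys positivity of any positive dimension vector, which follows from finiteness of $W$ via the telescoping identity $c(x)=x$ for $x=v+c(v)+\cdots+c^{m-1}(v)$ and non-degeneracy of the form. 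It is worth noting that your reflection-functor half is precisely the template the paper generalises in \cref{sec: reflection and roots}: \cref{lem: simple epi mono condition}, \cref{prop: reflection functor and reflection element}, \cref{lem: coxeter eventually non-positive} and the proof of \cref{thm: gabriel root} are the fusion-categorical analogues of the steps you describe, so reading them against your sketch is a good consistency check.
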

\noindent By the works of Dlab--Ringel \cite{Dlab_Ring75, DR_76}, it is known that the result above can be extended to include all Dynkin diagrams (adding in $B_n,C_n, F_4, G_2$) if one considers representations of a generalised notion of quivers known as valued quivers (or $K$-species).

The generalisation to valued quivers is quite natural as the $ADE$ diagrams are part of the Dynkin diagrams classifying the semisimple Lie algebras, which all have well-defined root systems.
However, the $ADE$ diagrams are also part of a similar (yet slightly different) classification known as the \emph{Coxeter--Dynkin diagrams}, which serve to classify the finite Coxeter groups \cite{Cox35}.
More importantly, root systems for Coxeter groups can be similarly developed; see for example \cite{Deodhar82, Dyer09}.
It is therefore natural to search for a generalised Gabriel's theorem that is related to Coxeter theory -- such is the aim of this paper.
Namely, we shall introduce a notion of representation for a (different) class of generalised quivers known as \emph{Coxeter quivers}, which will extend Gabriel's theorem to include all Coxeter--Dynkin diagrams -- including the non-crystallographic types $H_3, H_4$ and $I_2(n)$.

A \emph{Coxeter quiver} (see \cref{defn: Coxeter quiver}) is a quiver $Q$ whose arrows are labelled by natural numbers in $\{3,4,5,...\} \subset \mathbb{N}$, where a classical quiver is (by definition) a Coxeter quiver with all arrows labelled by `3'.
Whilst representations of quivers are built using the category of vector spaces, representations of Coxeter quivers $Q$ are built using different \emph{fusion categories} $\cC(Q)$ depending on $Q$.
These fusion categories $\cC(Q)$ are constructed from (tensor products of) fusion categories associated to $U_q(\mathfrak{s}\mathfrak{l}_2)$ at roots of unity\footnote{We will actually use the equivalent diagrammatic category known as the Temperley-Lieb-Jones categories $\TLJ_n$.} (see \cref{subsec: Coxeter quiver rep}); when $Q$ is just a classical quiver, $\cC(Q)$ will be (equivalent to) the category of vector spaces.

A representation $V$ of a Coxeter quiver $Q$ consists of the following datum:
to each vertex $i \in Q_0$, we assign an object $V_i$ in the fusion category $\cC(Q)$; and to each arrow $\alpha$ from $i \ra j$ with label $n$, we assign a morphism $V_{\alpha} : A_n \otimes V_i \ra V_j$ with $A_n$ depending on the label $n$ (see \cref{defn: coxeter quiver representation} for the precise definition).
In the particular case where $Q$ is just a classical quiver (all labels are `3'), $A_n = A_3$ will be the one-dimensional vector space, so that the definition coincides with the classical quiver representation.
These representations, together with their morphisms, form an abelian category in a similar fashion, and we say that a Coxeter quiver is of \emph{finite-type} if its category of representations has finitely many indecomposables (up to isomorphism).
The relevance to the first part of Gabriel's theorem is as follows:
\begin{thmx}[= \cref{thm: gabriel classification}] \label{thm: intro Gabriel}
A Coxeter quiver  $Q$ is of finite-type if and only if its underlying graph (forgetting orientations but keeping the labels) is a finite (disjoint) union of Coxeter--Dynkin diagrams: $A_n, B_n=C_n, D_n, E_6,E_7,E_8, F_4, G_2$, $H_3, H_4, I_2(n)$.
\end{thmx}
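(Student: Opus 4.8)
The plan is to transport the classical proof of Gabriel's theorem to the fusion-categorical setting, the two new ingredients being a Coxeter analogue of the Tits form of a quiver and the reflection functors for Coxeter quivers. First I would reduce to $Q$ connected, since a representation of a disjoint union is a tuple of representations of the components and finite-typeness passes both ways. To a representation $V$ one attaches its dimension vector $\underline{\dim}\,V$, recording the class of each $V_i$ in the appropriate module over the fusion ring; composing with the categorical dimension, which sends the generating simple of $\TLJ_n$ to $2\cos(\pi/n)$, identifies the lattice of dimension vectors with the root lattice of the Coxeter root system of $\overline Q$ --- simple representations going to simple roots --- under which the (hereditary) Euler form $\langle \underline{\dim}\,V, \underline{\dim}\,W\rangle := \dim_k\Hom(V,W) - \dim_k\mathrm{Ext}^1(V,W)$ has associated quadratic form the Coxeter form $q_Q$ of $\overline Q$ (represented, in a suitable normalisation, by $2I - A$ with $A_{ij} = 2\cos(\pi/n_{ij})$ and $A_{ii}=0$). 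The Coxeter-theoretic input I would isolate as a lemma: for connected $\overline Q$, the statements ``$\overline Q$ is a Coxeter--Dynkin diagram'', ``$W_{\overline Q}$ is finite'', ``$q_Q$ is positive definite'', and ``the Coxeter root system of $\overline Q$ is finite'' are equivalent (Coxeter's classification).

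For the \emph{if} direction, assume $\overline Q$ is Coxeter--Dynkin, so $q_Q$ is positive definite and $W_{\overline Q}$ is finite. I would run the reflection-functor argument: (a) iterating a Coxeter functor $C^+ = \Sigma^+_{i_m}\cdots\Sigma^+_{i_1}$ along an admissible source-ordering annihilates every indecomposable after finitely many steps --- because for finite $W_{\overline Q}$ no nonzero vector has all its Coxeter-element iterates in the positive cone, whereas dimension vectors of nonzero representations are positive --- so every indecomposable is obtained from a simple projective by a finite string of inverse reflection functors (it is ``preprojective''); (b) simple projectives are bricks with no self-extensions and the $\Sigma^\pm_i$ preserve this, so every indecomposable $V$ is a brick with $\mathrm{Ext}^1(V,V) = 0$, whence $q_Q(\underline{\dim}\,V) = \dim\mathrm{End}(V) - \dim\mathrm{Ext}^1(V,V) = 1$ and $\underline{\dim}\,V$ is a positive root; (c) since the $\Sigma^\pm_i$ restrict to mutually inverse equivalences between the relevant subcategories, $\underline{\dim}$ is injective on isomorphism classes of indecomposables, with image exactly the positive roots. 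As $\overline Q$ is Coxeter--Dynkin this set is finite, so $Q$ is of finite type.

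For the \emph{only if} direction, which I expect to be the crux, suppose $\overline Q$ is connected and not Coxeter--Dynkin, so $q_Q$ is not positive definite. Since $A$ is non-negative and (by connectedness) irreducible, Perron--Frobenius yields $v > 0$ with $q_Q(v) = (2 - \rho(A))|v|^2 \le 0$ and $\rho(A) \ge 2$: if $\rho(A) > 2$ the open negative cone of $q_Q$ meets the positive orthant and hence contains an integral $d > 0$, while if $\rho(A) = 2$ then $\overline Q$ is one of the affine Coxeter diagrams --- all crystallographic --- and one may take $d = \delta$ its integral null root; in both cases $d > 0$ and $q_Q(d) \le 0$. I would then set up the representation variety: for fixed $d$, representations with dimension vector $d$ form a finite-dimensional affine space $\mathrm{Rep}(Q,d) = \bigoplus_{\alpha\colon i\to j}\Hom_{\cC(Q)}(A_{n_\alpha}\otimes V_i, V_j)$ on which the linear algebraic group $G_d = \prod_i \Aut_{\cC(Q)}(V_i)$ acts with orbits the isomorphism classes; the diagonal $k^\times$ in $G_d$ acts trivially, so every orbit has dimension at most $\dim G_d - 1$, and the bookkeeping of fusion multiplicities gives $\dim G_d - \dim\mathrm{Rep}(Q,d) = q_Q(d)$. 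Were $Q$ of finite type, $\mathrm{Rep}(Q,d)$ would be a \emph{finite} union of $G_d$-orbits --- each representation being a direct sum of the finitely many indecomposables, by Krull--Schmidt --- hence would contain a dense orbit, forcing $\dim\mathrm{Rep}(Q,d) \le \dim G_d - 1$, i.e.\ $q_Q(d) \ge 1$, contradicting $q_Q(d) \le 0$. (Alternatively: when $W_{\overline Q}$ is infinite the Coxeter element has infinite order, the preprojective indecomposables $C^{-k}P_i$ never vanish, and have pairwise distinct dimension vectors.) Combining the two directions with the Coxeter lemma proves the classification; the main obstacle I anticipate is making this representation-variety dimension count rigorous over $\cC(Q)$ --- in particular establishing the identity with $q_Q(d)$ and identifying isomorphism classes with $G_d$-orbits when the $V_i$ are arbitrary objects of a fusion category --- together with pinning down the correct module-over-the-fusion-ring home for the dimension vector so that the Euler form indeed matches $q_Q$.
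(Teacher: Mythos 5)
Your route is genuinely different from the paper's, and as written it has a gap at exactly the point you flag as the anticipated obstacle: the identification of the homological/orbit-count quadratic form with the real Coxeter form $q_Q$ represented by $2I-A$, $A_{ij}=2\cos(\pi/n_{ij})$, is false. The $\bbC$-dimensions of hom spaces in $\cC(Q)$ are governed by fusion multiplicities, not by Frobenius--Perron dimensions: for an object $V_i$ one has $\dim_\bbC\cEnd(V_i)=\sum_{A\in\Irr(\cC(Q))}|A(V_i)|^2$, which differs from $\mathrm{FPdim}(V_i)^2$ already for $V_i=\nPi[5]_2$ in $\TLJ_5^{even}$ ($1$ versus $\tau^2$). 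Consequently $\dim G_d-\dim\mathrm{Rep}(Q,d)$ and $\dim_\bbC\Hom(V,W)-\dim_\bbC\mathrm{Ext}^1(V,W)$ compute the Tits form of the \emph{unfolded} classical quiver $\check Q$ evaluated on the unfolded dimension vector, not $q_Q$ evaluated on the Frobenius--Perron image of $\dim V$. A concrete failure: for $Q=\bullet\xra{5}\bullet$ the indecomposable $V=S(2)\otimes\nPi[5]_2$ has $\dim_\bbC\mathrm{End}(V)=1$ and $\mathrm{Ext}^1(V,V)=0$, yet $q_Q$ of its real dimension vector $(0,\tau)$ is $\tau^2\neq 1$. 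This same discrepancy breaks step (b) of your ``if'' direction (indecomposables need not land on roots of the real root system --- the correct target is the set of \emph{extended} positive roots over the fusion ring $K_0(\cC(Q))$, of which there are $|\Irr(\cC(Q))|$ copies of $\Phi_+$; this is the content of the paper's Theorem B) and breaks the contradiction $q_Q(d)\ge 1$ in your ``only if'' direction. Repairing the dimension count forces you to work with $\check Q$ and its integral Tits form, at which point you have essentially rederived the paper's argument.

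For contrast, the paper's proof is much softer: it establishes an abelian equivalence $\rep(Q)\simeq\rep(\check Q)$ (\cref{thm: rep Q and checkQ equiv}), checks by hand that $\overline{\check Q}$ is a union of $ADE$ diagrams exactly when $\overline Q$ is Coxeter--Dynkin (one direction is the table in \cref{fig: unfolding finite type coxeter}; the other uses \cref{lem: multilaced quiver not finite type} to exclude multiple labelled edges and then the injection $W(\overline Q)\hookrightarrow W(\overline{\check Q})$ coming from the folding to conclude $W(\overline Q)$ is finite), and quotes the classical Gabriel theorem for $\check Q$. No Euler form, reflection functor, or representation-variety argument is needed for the classification statement; those tools appear only later, over the fusion ring rather than over $\bbR$, to prove the bijection with extended positive roots. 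If you want to salvage your approach, the honest versions of your lemmas are: the quadratic form is the Tits form of $\check Q$, positivity/definiteness of that form is equivalent to $\overline{\check Q}$ being $ADE$, and the remaining Coxeter-theoretic step is to show $\overline{\check Q}$ is $ADE$ if and only if $\overline Q$ is Coxeter--Dynkin --- which is precisely the combinatorial content the paper supplies.
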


While representations of classical quivers are related to root system over $\Z$ defined on lattices $L_{\check{Q}}:= \bigoplus_{i \in {\check{Q}}_0} \Z$, the representations of Coxeter quivers are related to generalised root systems defined over (commutative) \emph{fusion rings} $K_0(\cC(Q))$ of $\cC(Q)$ (see \cref{defn: fusion ring} for definition of fusion rings).
More precisely, given a Coxeter quiver $Q$, the associated root system is defined on the ``lattice'' $L_Q := \bigoplus_{i \in Q_0} K_0(\cC(Q))$ equipped with a bilinear form that is linear in $K_0(\cC(Q))$ and takes value in $K_0(\cC(Q))$ as well.
This type of root systems were studied by Dyer in \cite{Dyer09} and many of the properties satisfied by the usual root systems can be generalised to this setting.
In particular, the bilinear form induces a (faithful) action of the corresponding Coxeter group and the notion of positive roots (and their extended version) can be defined in a similar fashion (see \cref{subsec: root system in fusion ring}).

Analogous to the classical case, the dimension vector of a Coxeter quiver representation (see \cref{defn: dim vector}) will take value in $L_Q$.
Using reflection functors for Coxeter quivers (similar to those in \cite{BGP_73, DR_76}) and their interaction with the generalised root systems described above, we obtain the second part of Gabriel's theorem for Coxeter quivers.
\begin{thmx}[= \cref{thm: gabriel root}] \label{thm: intro gabriel root}
Let $Q$ be a Coxeter quiver whose underlying graph is a Coxeter--Dynkin diagram. 
The isomorphism classes of indecomposable representations of $Q$ correspond bijectively to the extended positive roots of the free $K_0(\cC(Q))$-module $L_{Q}$ by taking the dimension vector.
Moreover, every indecomposable representation $V \in \rep(Q)$ is of the form
\[
V \cong V' \otimes A
\]
for some $V' \in \rep(Q)$ that corresponds to a positive root and some simple object $A$ in $\cC(Q)$.
\end{thmx}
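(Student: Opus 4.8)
The plan is to run the reflection-functor argument of Bernstein--Gelfand--Ponomarev \cite{BGP_73}, in the valued-quiver form of Dlab--Ringel \cite{DR_76}, in the fusion-categorical setting, using the following inputs developed earlier. First, the reflection functors: for a vertex $k$ that is a sink (resp.\ source) of $Q$ there is $\Phi_k^+\colon\rep(Q)\to\rep(s_k Q)$ (resp.\ $\Phi_k^-$) such that an indecomposable $V$ is either isomorphic to $S_k\otimes A$ for a simple object $A\in\cC(Q)$, with $\Phi_k^\pm V=0$, or else $\Phi_k^\pm V$ is indecomposable with $\Phi_k^\mp\Phi_k^\pm V\cong V$ and $\underline{\dim}\,\Phi_k^\pm V=s_k(\underline{\dim}\,V)$; moreover, since $-\otimes A$ is exact on the semisimple category $\cC(Q)$ and commutes with the data defining $\Phi_k^\pm$, there is a natural isomorphism $\Phi_k^\pm(V\otimes A)\cong(\Phi_k^\pm V)\otimes A$. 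Second, facts about the generalised root system on $L_Q$ over $K_0(\cC(Q))$ in the sense of \cite{Dyer09}: a representation concentrated at a single vertex $i$ is indecomposable exactly when it is $S_i\otimes A$ for a simple $A$, in which case its dimension vector is $[A]\,\alpha_i$, and conversely the only positive roots concentrated at a single vertex are the simple roots; each $s_k$ permutes the positive roots other than $\alpha_k$; and, since the underlying graph is a Coxeter--Dynkin diagram, the Coxeter element $c$ of the (finite) Coxeter group $W$ has no nonzero fixed vector on $L_Q$ (for instance, applying the Frobenius--Perron dimension character $K_0(\cC(Q))\to\bbR$, which sends simple objects to positive reals and intertwines the $K_0(\cC(Q))$-linear $W$-action with the geometric reflection representation of $W$, reduces this to the classical fact).

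I would first show every indecomposable $V$ reduces to one of the $S_k\otimes A$. Fix an admissible ordering $1,\dots,n$ of $Q_0$ and apply $\Phi_1^+,\Phi_2^+,\dots$ cyclically, one at a time, forming the Coxeter functor $\Phi^+=\Phi_n^+\cdots\Phi_1^+$ with Coxeter element $c=s_n\cdots s_1$. If the process never produces, at the vertex $k$ being reflected, a representation isomorphic to some $S_k\otimes A$, then $\Phi^+$ applies indefinitely and $\underline{\dim}\,(\Phi^+)^m V=c^m(\underline{\dim}\,V)$ is a nonzero vector with non-negative Frobenius--Perron entries for every $m$; but then $\sum_{m=0}^{h-1}c^m(\underline{\dim}\,V)$ (with $h$ the order of $c$) is a nonzero $c$-fixed element of $L_Q$, contradicting the previous paragraph. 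Hence the process terminates: there are vertices $i_1,\dots,i_m$ following the admissible order with $\Phi_{i_{m-1}}^+\cdots\Phi_{i_1}^+V\cong S_{i_m}\otimes A$. Reversing and using the reflection isomorphisms together with $\cC(Q)$-linearity gives
\[ V\;\cong\;\Phi_{i_1}^-\cdots\Phi_{i_{m-1}}^-(S_{i_m}\otimes A)\;\cong\;V'\otimes A,\qquad V':=\Phi_{i_1}^-\cdots\Phi_{i_{m-1}}^-(S_{i_m}), \]
with $V'$ indecomposable and, by $K_0(\cC(Q))$-linearity of the $W$-action, $\underline{\dim}\,V=(\underline{\dim}\,V')\,[A]$ where $\underline{\dim}\,V'=s_{i_1}\cdots s_{i_{m-1}}(\alpha_{i_m})$ is a root; this root has non-negative Frobenius--Perron entries, hence is positive, so $\underline{\dim}\,V$ is an extended positive root. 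This yields the ``moreover'' clause and that $\underline{\dim}$ takes values in the extended positive roots.

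For surjectivity, given an extended positive root $\gamma=\beta\,[A]$ with $\beta$ positive and $A$ simple, the same ``$c$ has no fixed vector'' fact forces some prefix $s_{i_{m-1}}\cdots s_{i_1}$ of the cyclic admissible word to carry $\beta$ out of the positive roots; choosing the shortest such prefix, the fact that $s_{i_m}$ permutes the positive roots $\neq\alpha_{i_m}$ gives $s_{i_{m-1}}\cdots s_{i_1}(\beta)=\alpha_{i_m}$ with all shorter prefixes carrying $\beta$ to positive roots. A short computation then shows $\Phi_{i_1}^-\cdots\Phi_{i_{m-1}}^-(S_{i_m}\otimes A)$ is well defined (no intermediate dimension vector is a simple root at the vertex being reflected, since it is always a positive root distinct from that simple root) and indecomposable with dimension vector $\gamma$. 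For injectivity, given indecomposables $V,W$ with $\underline{\dim}\,V=\underline{\dim}\,W$, apply the same constituent reflection functors to both: they always share a dimension vector, so one is annihilated exactly when the other is, and when this first happens the shared dimension vector is some $[A]\,\alpha_k$ and both representations are indecomposable and concentrated at $k$, hence both $\cong S_k\otimes A$; reversing the functors gives $V\cong W$. The main obstacle is to keep these two reductions -- of representations by reflection functors, and of vectors by simple reflections in the fusion-ring root system -- rigorously in step, i.e.\ to know that a reflection functor annihilates an indecomposable precisely when its dimension vector at the reflected vertex has become $[A]\,\alpha_k$, and that the root system over $K_0(\cC(Q))$ in Coxeter--Dynkin type has the required finiteness properties (no nonzero $c$-fixed vector, and the prefix reduction of positive roots); both rest on the structure theory developed in the earlier sections together with \cref{thm: intro Gabriel}.
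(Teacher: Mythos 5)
Your proposal is correct and follows essentially the same route as the paper's proof of \cref{thm: gabriel root}: an admissible sink ordering, BGP-style reflection functors compatible with the $\cC(Q)$-action, the identity $\dim R_i^{\pm}(V)=\sigma_i(\dim V)$, and the lemma that no positive element of $L_Q$ stays positive under all powers of the Coxeter element, followed by the same injectivity argument via freeness of $L_Q$ over $K_0(\cC(Q))$ and the same surjectivity argument via a shortest prefix carrying a positive extended root out of positivity. The only minor divergence is that you establish the ``no nonzero $c$-fixed vector'' fact through the Frobenius--Perron dimension character, whereas the paper deduces it directly from the non-degeneracy of the bilinear form $B$ over the fusion ring.
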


The approach of studying non-simply-laced Dynkin graphs and quivers by folding their simply-laced cousins via automorphisms is well-known; see for example \cite{Stein68, Lusz_book_intro_quant, Tanisaki80}.
However, certain non-crystallographic graphs require foldings that do not arise from automorphisms.
One famous example is Lusztig's folding of the $E_8$ graph onto the $H_4$ graph \cite{Lusz83}, with other similar examples studied in different contexts \cite{Scherbak88, Moody_Patera93, Zuber94, Crisp99, Dyer09}.
Although the theory of Coxeter quiver representations (a priori) may not seemed related to the classical theory of quiver representations, they turn out to be related through these types of foldings.
\begin{thmx}[= \cref{thm: rep Q and checkQ equiv}] 
Let $Q$ be a Coxeter quiver and let $\check{Q}$ be its associated unfolded classical quiver (see \cref{subsec: unfolding}).
The two categories $\rep(Q)$ and $\rep(\check{Q})$ are equivalent as abelian categories.
\end{thmx}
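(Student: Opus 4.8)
The plan is to show that, after decategorifying the fusion layer, a representation of $Q$ is literally the datum of a vector-space representation of $\check{Q}$. The only input needed is that $\cC(Q)$ is a finitely semisimple $\bbC$-linear category, so that every object decomposes canonically into isotypic components indexed by the finite set $\Irr(\cC(Q))$. Recall from \cref{subsec: unfolding} that the vertices of $\check{Q}$ are the pairs $(i,s)$ with $i\in Q_0$ and $s\in\Irr(\cC(Q))$, and that for an arrow $\alpha\colon i\to j$ of $Q$ with label $n$ the arrows of $\check{Q}$ from $(i,s)$ to $(j,t)$ are indexed by a basis of $\Hom_{\cC(Q)}(S_t,\,A_n\otimes S_s)$ (at most one-dimensional in the cases at hand, though I will not use this). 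As setup, I would fix for each such $\alpha,s,t$ a basis $\{e^{\alpha}_{s,t,\beta}\}_{\beta}$ of $\Hom_{\cC(Q)}(S_t,\,A_n\otimes S_s)$ and, using semisimplicity, dual morphisms $\bar{e}^{\alpha}_{s,t,\beta}\colon A_n\otimes S_s\to S_t$ with $\bar{e}^{\alpha}_{s,t,\beta}\circ e^{\alpha}_{s,t,\beta'}=\delta_{\beta\beta'}\id_{S_t}$ and $\sum_{t,\beta}e^{\alpha}_{s,t,\beta}\circ\bar{e}^{\alpha}_{s,t,\beta}=\id_{A_n\otimes S_s}$. (Note that no finite-type hypothesis is required: $\cC(Q)$ always has finitely many simple objects, so $\check{Q}$ is a finite quiver.)

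Next I would construct two additive functors. Define $F\colon\rep(Q)\to\rep(\check{Q})$ on objects by $F(V)_{(i,s)}:=\Hom_{\cC(Q)}(S_s,V_i)$, the $S_s$-isotypic multiplicity space of $V_i$, and on an arrow $\beta\colon(i,s)\to(j,t)$ of $\check{Q}$ lying over $\alpha\colon i\to j$ by the linear map $f\mapsto V_\alpha\circ(\id_{A_n}\otimes f)\circ e^{\alpha}_{s,t,\beta}$; on morphisms, $F(\phi)_{(i,s)}:=\Hom_{\cC(Q)}(S_s,\phi_i)$, which intertwines the structure maps by naturality together with the identity $\phi_j\circ V_\alpha=V'_\alpha\circ(\id_{A_n}\otimes\phi_i)$. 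Define $G\colon\rep(\check{Q})\to\rep(Q)$ on objects by $G(W)_i:=\bigoplus_{s\in\Irr(\cC(Q))}W_{(i,s)}\otimes_{\bbC}S_s$ (the copower, i.e.\ $\dim W_{(i,s)}$ copies of $S_s$), with structure morphism for $\alpha\colon i\to j$ the sum over the arrows $\beta\colon(i,s)\to(j,t)$ over $\alpha$ of the components $W_\beta\otimes\bar{e}^{\alpha}_{s,t,\beta}$ (after commuting $A_n$ past the vector space $W_{(i,s)}$); functoriality in $W$ is immediate.

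Then I would verify $G\circ F\cong\id_{\rep(Q)}$ and $F\circ G\cong\id_{\rep(\check{Q})}$. For the first, the evaluation morphism $\bigoplus_{s}\Hom_{\cC(Q)}(S_s,V_i)\otimes S_s\xrightarrow{\sim}V_i$ is an isomorphism in $\cC(Q)$ by semisimplicity and is natural in $V$; the completeness relation $\sum_{t,\beta}e^{\alpha}_{s,t,\beta}\circ\bar{e}^{\alpha}_{s,t,\beta}=\id$ is exactly what makes it compatible with the (re)assembled structure maps. For the second, Schur's lemma identifies $\Hom_{\cC(Q)}(S_s,\,W_{(i,s')}\otimes S_{s'})$ with $W_{(i,s)}$ if $s=s'$ and with $0$ otherwise, compatibly with the arrow data, so $F$ is an equivalence of categories. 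Finally, $\rep(Q)$ and $\rep(\check{Q})$ are abelian with kernels and cokernels formed vertexwise (in $\cC(Q)$, resp.\ in $\vec$), and $\Hom_{\cC(Q)}(S_s,-)$ and $(-)\otimes_{\bbC}S_s$ are exact since $\cC(Q)$ is semisimple; hence $F$ and $G$ are exact and $F$ is an equivalence of abelian categories --- in any case, an equivalence between abelian categories is automatically exact.

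I do not expect a conceptual obstacle: the statement is essentially a consistency check that the unfolding $\check{Q}$ has been defined correctly, and the equivalence is then forced by the canonical isotypic decomposition in $\cC(Q)$ together with the defining fusion data $A_n\otimes S_s\cong\bigoplus_{t,\beta}S_t$. The bulk of the work is bookkeeping --- fixing the bases $\{e^{\alpha}_{s,t,\beta}\}$ and their duals consistently --- and the one genuinely computational point is the diagram chase, using the completeness relation, showing that the evaluation isomorphism intertwines the structure morphisms of $GF(V)$ and $V$.
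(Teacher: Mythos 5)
Your proposal is correct and follows essentially the same route as the paper: both identify the vertex spaces of the unfolded quiver with the isotypic multiplicity spaces of the $V_i$ and use the (at most one-dimensional) hom spaces $\Hom_{\cC(Q)}(\nPi_{n-3}\otimes B,C)$ to turn the structure morphisms into matrices of scalars. The only real difference is how the equivalence is certified: you build an explicit quasi-inverse $G$ and verify $G\circ F\cong\id$ and $F\circ G\cong\id$ via the evaluation isomorphism, Schur's lemma and the completeness relation $\sum_{t,\beta}e^{\alpha}_{s,t,\beta}\circ\bar{e}^{\alpha}_{s,t,\beta}=\id$, whereas the paper constructs only the forward functor $\cU$ (after fixing decomposition isomorphisms $\varphi_X$) and proves full faithfulness and essential surjectivity directly by a diagram chase.
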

It is important to note that the equivalence is only an equivalence of abelian categories; $\rep(Q)$ is also a (right) \emph{module category} (see \cref{defn: module category}) over the fusion category $\cC(Q)$.
In other words, $\rep(Q)$ and the equivalence above provide an explicit way of realising a $\cC(Q)$-module category structure on $\rep(\check{Q})$, which can be seen as a folding structure on $\check{Q}$.\footnote{In the sense of \cite{DufTumarkin22}; see \cref{subsec: cluster and folding} for more details.}
Let us also mention here that with the appropriate notion of \emph{path algebra} $\Path(Q)$ for $Q$ (see \cref{defn: path alg}), we also have an equivalence between the category of modules $\Path(Q)$-$\mod$ and $\rep(Q)$; see \cref{sec: path algebra} for more details.

Now that we have revealed the connection to classical quiver representations, the observant reader may realise that the first part of Gabriel's theorem for Coxeter quivers (\Cref{thm: intro Gabriel}) is just a simple consequence of this theorem, and that is indeed the case -- one just has to show that $Q$ is finite type if and only if $\check{Q}$ is finite type.
This will only involve a rather simple combinatorial check (plus some Coxeter theory).

\subsection*{Outline of the paper}
Here is a brief outline of the paper.
\cref{sec: TLJ} contains the background materials, where we define fusion categories and introduce the fusion categories that will be used to construct representations of Coxeter quivers.
\cref{sec: Coxeter quiver and rep} contains the definition of Coxeter quivers and their representations.

\cref{sec: unfolding and gabriel classification} and \cref{sec: reflection and roots} contain the main results. 
In \cref{sec: unfolding and gabriel classification}, we define the unfolding of Coxeter quivers $Q$ to obtain their associated classical quiver $\check{Q}$ and moreover, relate their categories of representations.
This will allow us to prove the first part of Gabriel's theorem for Coxeter quivers (\Cref{thm: intro Gabriel} = \cref{thm: gabriel classification}).
\cref{sec: reflection and roots} is where we define root systems over fusion rings and reflection functors for Coxeter quivers, which we use to prove the second part of Gabriel's theorem for Coxeter quivers (\Cref{thm: intro gabriel root} = \cref{thm: gabriel root}).

\cref{sec: path algebra} is dedicated to the complementary result that the category of modules over the path algebra of a Coxeter quiver is equivalent to the category of Coxeter quiver representations.
Finally \cref{sec: further remarks} contains some remarks on related works and possible further directions.

\subsection*{Notation and Convention}
Throughout this paper, we will use hook arrows $\hookrightarrow$ and two-headed arrows $\twoheadrightarrow$ to represent monomorphisms and epimorphisms respectively.

Given an abelian category $\cC$, the image of a morphism $f: X \ra Y$ is by definition the kernel of the cokernel of $f$, captured by the diagram
\[
\begin{tikzcd}
& \img(f) := \ker(\coker(f)) \ar[d, hookrightarrow,, "\fri(f):= \frk(\frc(f))"] & \\
X \ar[r,"f"] \ar[ru, dashed, twoheadrightarrow, "\exists ! \overline{f}"] & Y \ar[r, twoheadrightarrow, "\frc(f)"] & \coker(f).
\end{tikzcd}
\]
In particular, the unique epi-mono factorisation of a morphism $X \xra{f} Y$ through its image shall be denoted by $X \xtwoheadrightarrow{\overline{f}} \img(f) \xhookrightarrow{\fri(f)} Y$.

Given a direct sum $\bigoplus_{i \in I} Y_i$ for some index set $I$ with maps $f_i: X \ra Y_i$ for each $i \in Q_0$, we shall use $[f_i]_{i \in I}$ to denote the map from $X \ra \bigoplus_{i \in I} V_i$ induced by the $f_i$'s into each summand, thinking of it as a column vector of maps once we fix some ordering on the index set $I$.
Similarly, given maps $g_i: Y_i \ra Z$, we shall use $[g_i]_{i \in I}^T$ to denote the map from $\bigoplus_{i \in I} Y_i \ra Z$ induced by the $g_i$ from each summand, thinking of it as a row vector of maps.
In particular, the map 
\[
X \xra{[f_i]_{i \in I}} \bigoplus_{i \in I} Y_i \xra{[g_i]_{i \in I}^T} Z
\] 
is just $\sum_{i \in I} g_i \circ f_i$, which is the same as the matrix multiplication $[g_i]_{i \in I}^T [f_i]_{i \in I}$ once an ordering of $I$ is fixed.
On the other hand, if we were given a map $f: \bigoplus_{i \in I} X_i \ra \bigoplus_{i \in I} Y_i$, we shall use $(f)_{i,j}$ to denote the map between the summands $(f)_{i,j}: X_i \ra Y_j$.
We will also use $A \overset{\oplus}{\subseteq} B$ to mean $A$ appears as a direct summand of $B$ (up to isomorphism).

In this paper, two very similar sounding notions will appear: a \emph{module category} over a fusion (tensor) category $\cC$ (\cref{defn: module category}) and a \emph{category of modules} over an algebra in a fusion category (\cref{defn: modules over algebra}). 
We warn the reader that these are two different (although related) notions and not to confuse the two of them.

\subsection*{Acknowledgements}
I'd like to first thank my PhD supervisor, Tony Licata, for his guidance and questions which led to the making of this work.
I'd also like to thank Drew D. Duffield and Greg Kuperberg for the interesting discussions during the preparation of this work.

\section{Preliminaries: fusion categories and \texorpdfstring{$\TLJ_n$}{TLJ}} \label{sec: TLJ}
\subsection{Fusion tensor categories and their module categories}
We shall quickly review the notion of a fusion category and some of its (desired) properties. 
We refer the reader to \cite{etingof_nikshych_ostrik_2005, EGNO15} for more details on this subject.

\begin{definition}\label{defn: tensor fusion category}
A (strict) \emph{tensor category} $\cC$ is a $\mathbb{K}$-linear (hom spaces are $\mathbb{K}$-vector spaces), abelian and (strict) monoidal category with a bi-exact monoidal structure $-\otimes -$, satisfying the following properties:
\begin{itemize}
\item the monoidal unit $\1$ is simple; and
\item rigidity: every object has left and right duals.
\end{itemize}
We say that $\cC$ is a \emph{fusion category} if $\cC$ is semi-simple and its set of isomorphism classes of simple objects is finite.
\end{definition}

\begin{example}
At this point the reader should check that the simplest possible example of a fusion category is the category of finite dimensional $\mathbb{K}$-vector spaces.
\end{example}

Other fusion categories of interests will be defined in \cref{subsection: TLJ}.

\begin{definition}\label{defn: fusion ring}
Let $\cA$ be an abelian category.
Its \emph{Grothendieck group} $K_0(\cA)$ is the abelian group freely generated by isomorphism classes $[A]$ of objects $A \in \cA$ modulo the relation
\[
[B] = [A] + [C] \iff 0 \ra A \ra B \ra C \ra 0 \text{ is exact}.
\]
If $\cA = \cC$ is moreover a fusion category, $K_0(\cC)$ can be equipped with a unital $\mathbb{N}$-ring structure.
Namely, it has a basis given by the (finite) isomorphism classes of simples $\{ [S_i] \}_{i=0}^k$ with $S_0 := \1$ denoting the monoidal unit of $\cC$, and multiplication is defined by the tensor product $[A]\cdot[B] := [A\otimes B]$, which satisfies
\[
[A] \cdot [B] = \sum_{i=0}^k r^{AB}_i [S_i], \quad r^{AB}_i \in \mathbb{N}.
\]
This equip $K_0(\cC)$ with an unital, associative ring structure with unit $[S_0] = [\1]$ and we call $K_0(\cC)$ the \emph{fusion ring} of $\cC$.
\end{definition}

\begin{definition} \label{defn: module category}
A right \emph{module category} over a fusion category $\cC$ is an additive category $\cA$ together with an additive monoidal functor $\Psi: \cC^{\otimes^\text{op}} \ra \cEnd(\cA)$.
We will usually denote the endofunctor $\Psi(A)$ assigned to $A \in \cC$ by $- \otimes A$.
\end{definition}
Note that when $\cA$ is moreover abelian, $\Psi$ will automatically map into the category of \emph{exact} endofunctors, since every object in a fusion category $\cC$ has both left and right duals.
As such, the Grothendieck group $K_0(\cA)$ of $\cA$ is also naturally a module over the fusion ring $K_0(\cC)$.
\begin{example}
\begin{enumerate}[(i)]
\item Any fusion category is always a right module category over itself, sending each object $A$ to the functor $- \otimes A$.
\item One can check that every additive category $\cA$ is naturally a module category over the category of finite dimensional vector spaces $\vec_{\mathbb{K}}$, sending each (non-zero) vector space $V$ to the endofunctor of $\cA$ which takes $A\in \cA$ to 
$
V(A) := \bigoplus_{i=1}^{\dim (V)} A,
$
and every morphism $A \xra{f} A'$ in $\cA$ to 
$
V(A) \xra{\bigoplus_{i=1}^{\dim V} f} V(A').
$
\end{enumerate} 
\end{example}

\subsection{Temperley-Lieb-Jones category at root of unity} \label{subsection: TLJ}
The fusion categories of interest in this paper will be built from (tensor products of) strict fusion categories known as the \emph{Temperley-Lieb-Jones category evaluated at} $q = e^{i\pi/n}$, which we denote as $\TLJ_n$.
This is a diagrammatic category, constructed as the additive completion of the category of Jones-Wenzl projectors that is semi-simplified by killing the negligible $(n-1)$th Jones-Wenzl projector.
In what follows, we will provide the minimal description of what is needed for the purpose of our paper, where we follow the description in \cite{chen2014temperleylieb} closely.
For more details, we refer the reader to \cite{wang_2010, turaev_2016}.
\begin{remark}
The representation-theoretic-inclined reader could instead use the semi-simplified category $\overline{\rep}(U_q(\mathfrak{s}\mathfrak{l}_2))$ of $U_q(\mathfrak{s}\mathfrak{l}_2)$-modules with $q = e^{i\frac{\pi}{n}}$, which is equivalent to $\TLJ_n$ as (braided) fusion categories; see \cite[Chapter XI, Section 6]{turaev_2016} and \cite[Section 5.5]{SnyTing09}.
\end{remark}

The Temperley-Lieb-Jones category $\TLJ_n$ at $q = e^{i\pi/n}$ \cite[Definition 5.4.1]{chen2014temperleylieb} is a strict fusion category that is $\bbC$-linear and is generated additively and monoidally by the following pairwise non-isomorphic $n-1$ simple objects:
\[
\Pi_0, \Pi_1, ..., \Pi_{n-2},
\]
with monoidal unit $\1$ given by $\Pi_0$.
Each simple object in $\TLJ_n$ is isomorphic to some (uniquely determined) $\Pi_a$, and we fix $\Pi_a$ to be the representative of each of the corresponding isomorphism class of simple objects.
In particular, the set of representatives of isomorphism classes of simple objects $\Irr(\TLJ_n)$ is given by
\[
\Irr(\TLJ_n) = \{ \Pi_0, \Pi_1, ..., \Pi_{n-2} \}.
\]

The simple objects $\Pi_a$ are self-dual (hence all objects are self dual).
In particular, we will use a cap (resp. cup) to denote the counit (resp. unit) of the self-dual structure on each $\Pi_a$ for $0 \leq a \leq n-2$:
\begin{equation} \label{dual counit}
\begin{tikzpicture}[scale = 0.4]
	\node at (0,-0.5) {$\Pi_a$};
	\node at (2,-0.5) {$\Pi_a$};
	\node at (1, 2) {$\Pi_0$};
  \draw (0, 0) arc[start angle=180,end angle=0,radius=1];
\end{tikzpicture}, \quad
\begin{tikzpicture}[scale = 0.4]
	\node at (0,2) {$\Pi_a$};
	\node at (2,2) {$\Pi_a$};
	\node at (1, -0.5) {$\Pi_0$};
  \draw (0, 1.5) arc[start angle=-180,end angle=0,radius=1];
\end{tikzpicture},
\end{equation}
satisfying the usual ``isotopy of strings'' relation (straight lines denote identity maps):
\[
\begin{tikzpicture}[scale = 0.4]
	\node at (4, 2.5) {$\Pi_a$};
  \draw (0, .5) arc[start angle=180,end angle=0,radius=1];
  \draw (4,.5) -- (4,2);
	\node at (0, 0) {$\Pi_a$};
	\node at (2, 0) {$\Pi_a$};
	\node at (4,0) {$\Pi_a$};
  \draw (0,-2) -- (0,-.5);
  \draw (2, -.5) arc[start angle=-180,end angle=0,radius=1];
	\node at (0, -2.5) {$\Pi_a$};
	\node at (6, 0) {$=$};
	\node at (8, 2.5) {$\Pi_a$};
  \draw (8,-2) -- (8,2);
  	\node at (8.5, 0) {id};
	\node at (8, -2.5) {$\Pi_a$};
	\node at (10, 0) {$=$};
	\node at (12, 2.5) {$\Pi_a$};
  \draw (12,.5) -- (12,2);
  \draw (14, .5) arc[start angle=180,end angle=0,radius=1];
	\node at (12, 0) {$\Pi_a$};
	\node at (14, 0) {$\Pi_a$};
	\node at (16,0) {$\Pi_a$};
  \draw (12, -.5) arc[start angle=-180,end angle=0,radius=1];
  \draw (16,-2) -- (16,-.5);
	\node at (16, -2.5) {$\Pi_a$};
\end{tikzpicture}.
\]
(Note that $\Pi_0$ were omitted since $\TLJ_n$ is a strict monoidal category, so that $\Pi_0 \otimes \Pi_a = \Pi_a = \Pi_a \otimes \Pi_0$.)

The semi-simple decompositions of the objects in $\TLJ_n$ is described by the following \emph{fusion rule}:
\begin{equation} \label{eq: fusion rule}
\Pi_a \otimes \Pi_b \cong 
	\begin{cases}
	\Pi_{|a-b|} \oplus \Pi_{|a-b| + 2} \oplus \cdots \oplus \Pi_{a+b}, &a+b \leq n-2; \\
	 \Pi_{|a-b|} \oplus \Pi_{|a-b| + 2} \oplus \cdots \oplus \Pi_{2n-(a+b)-4}, &a+b > n-2.
	\end{cases}
\end{equation}
Notice that no simple appears more than once in the semi-simple decomposition above.

\begin{remark}
Equation \eqref{eq: fusion rule} is symmetric along $a$ and $b$, so we have that $\Pi_a \otimes \Pi_b \cong \Pi_b \otimes \Pi_a$.
Nonetheless, the category $\TLJ_n$ is not symmetric, but \emph{braided}; see \cite[Chapter XII, Section 6]{turaev_2016}.
\end{remark}

The cases where $a=1,n-2$ and $n-3$ will be of utmost importance to us, which we shall explicitly record here for convenience:
\begin{itemize}
\item $a=1$:
\begin{equation} \label{eq: 1 b fusion rule}
\Pi_1 \otimes \Pi_b \cong \Pi_b \otimes \Pi_1 \cong
	\begin{cases}
	\Pi_1, &b=0 \\
	\Pi_{n-3}, &b=n-2\\
	\Pi_{b-1} \oplus \Pi_{b+1}, &\text{ otherwise}.
	\end{cases}
\end{equation}
\item $a = n-2$:
\begin{equation} \label{eq: z/2 fusion rule}
\Pi_{n-2} \otimes \Pi_b \cong \Pi_{n-2-b}.
\end{equation}
\item $a=n-3$: Together \eqref{eq: 1 b fusion rule} and \eqref{eq: z/2 fusion rule} imply that
\begin{equation} \label{eqn: n-3 b fusion rule}
\Pi_{n-3} \otimes \Pi_b \cong \Pi_{n-2} \otimes \Pi_1 \otimes \Pi_b \cong 
	\begin{cases}
	\Pi_{n-3}, &b=0 \\
	\Pi_{1}, &b=n-2\\
	\Pi_{n-2-b-1} \oplus \Pi_{n-2-b+1}, &\text{ otherwise}.
	\end{cases}
\end{equation}
\end{itemize}

Using the fusion rules in \eqref{eq: fusion rule}, one sees that if $a$ and $b$ are both even, then the semi-simple decomposition contains only evenly labelled objects.
We denote the full fusion subcategory of $\TLJ_n$ generated the evenly labelled object by $\TLJ_n^{even}$, which has set of simple representatives given by 
\[
\Irr(\TLJ^{even}_n) = \{\Pi_a \mid a \text{ is even}\}.
\]
Note that when $n = 3$, the fusion subcategory $\TLJ_3^{even}$ is equivalent to the category of finite dimensional vector spaces over $\bbC$.

It follows from semi-simplicity and \eqref{eq: fusion rule} that we have an explicit description of the following morphism spaces:
\begin{equation} \label{eqn: hom space trivalent}
\Hom_{\TLJ_n}(\Pi_a \otimes \Pi_b, \Pi_c) = 
\begin{cases}
\bbC\{\tri\}, &\text{ if } \Pi_c \overset{\oplus}{\subseteq} \Pi_a \otimes \Pi_b; \\
0,  &\text{ otherwise},
\end{cases}
\end{equation}
where $\tri$ denotes a choice of basis element for the one-dimensional morphism space known as the \emph{$q$-admissible map} \cite[Section 3.1]{chen2014temperleylieb}:
\begin{equation} \label{admissible triple}
\begin{tikzpicture}[scale = 0.4] 
  \node at (0,1.5) {$\Pi_c$};
  \node at (1,-1.5) {$\Pi_b$};
  \node at (-1,-1.5) {$\Pi_a$};
  \draw (0, 0) to (0,1);
  \draw (0, 0) to (1,-1);
  \draw (0, 0) to (-1,-1);
\end{tikzpicture}.
\end{equation}
When $c=0$, it follows from \eqref{eq: fusion rule} that the only case when the morphism space is non-zero is $a=b$ and the $q$-admissible map $\tri$ in this case coincides with the cap map $\cap$ in \eqref{dual counit}.
On the other hand when $a=0$ (resp. $b=0$), it must be that $c=a$ (resp. $c=b$) and the $q$-admissible map is the identity map on $\Pi_c$.
The (non-unique) isomorphism in the fusion rule \eqref{eq: fusion rule} can also be given by summing up the corresponding $q$-admissible map into each simple summand.

The $q$-admissible maps represented by the trivalent graphs are ``rotational invariant'', namely under the dualisations
\[
\Hom_{\TLJ_n}(\Pi_a \otimes \Pi_b, \Pi_c) \xra{\cong} \Hom_{\TLJ_n}(\Pi_a,  \Pi_c \otimes \Pi_b) \xra{\cong} \Hom_{\TLJ_n}(\Pi_c \otimes \Pi_a, \Pi_b),
\]
the map obtained agrees with the corresponding $q$-admissible map:
\[
\begin{tikzpicture}[scale = 0.4] 
  \node at (0,1.5) {$\Pi_b$};
  \node at (1,-1.5) {$\Pi_a$};
  \node at (-1,-1.5) {$\Pi_c$};
  \draw (0, 0) to (0,1);
  \draw (0, 0) to (1,-1);
  \draw (0, 0) to (-1,-1);
  \node at (2, 0) {$=$};
	\node at (10, 5) {$\Pi_b$};
  \draw (4, 3) arc[start angle=180,end angle=0,radius=1.5];
  \draw (10,3) -- (10,4.5);
	\node at (4, 2.5) {$\Pi_c$};
	\node at (7, 2.5) {$\Pi_c$};
	\node at (10, 2.5) {$\Pi_b$};
  \draw (4,.5) -- (4,2);
  \draw (7, 1.2) to (7,2);
  \draw (7, 1.2) to (8,0.5);
  \draw (7, 1.2) to (6,0.5);
  \draw (10,.5) -- (10,2);
	\node at (4, 0) {$\Pi_c$};
	\node at (6, 0) {$\Pi_a$};
	\node at (8, 0) {$\Pi_b$};
	\node at (10,0) {$\Pi_b$};
  \draw (4,-2) -- (4,-.5);
  \draw (6,-2) -- (6,-.5);
  \draw (8, -.5) arc[start angle=-180,end angle=0,radius=1];
	\node at (4, -2.5) {$\Pi_c$};
	\node at (6, -2.5) {$\Pi_a$};
\end{tikzpicture};
\]
the same is true for the other rotation.

We shall use the inverted trivalent graphs:
\[
\begin{tikzpicture}[scale = 0.4] 
  \node at (0,-1.5) {$\Pi_a$};
  \node at (1,1.5) {$\Pi_b$};
  \node at (-1,1.5) {$\Pi_c$};
  \draw (0, 0) to (0,-1);
  \draw (0, 0) to (1,1);
  \draw (0, 0) to (-1,1);
  \node at (2, 0) {$:=$};
	\node at (5, 2.5) {$\Pi_c$};
	\node at (8, 2.5) {$\Pi_b$};
  \draw (5, 1.2) to (5,2);
  \draw (5, 1.2) to (6,0.5);
  \draw (5, 1.2) to (4,0.5);
  \draw (8,.5) -- (8,2);
	\node at (4, 0) {$\Pi_a$};
	\node at (6, 0) {$\Pi_b$};
	\node at (8,0) {$\Pi_b$};
  \draw (4,-2) -- (4,-.5);
  \draw (6, -.5) arc[start angle=-180,end angle=0,radius=1];
	\node at (4, -2.5) {$\Pi_a$};
\end{tikzpicture}
\qquad,
\begin{tikzpicture}[scale = 0.4] 
  \node at (0,-1.5) {$\Pi_b$};
  \node at (1,1.5) {$\Pi_c$};
  \node at (-1,1.5) {$\Pi_a$};
  \draw (0, 0) to (0,-1);
  \draw (0, 0) to (1,1);
  \draw (0, 0) to (-1,1);
  \node at (2, 0) {$:=$};
	\node at (4, 2.5) {$\Pi_a$};
	\node at (7, 2.5) {$\Pi_c$};
  \draw (7, 1.2) to (7,2);
  \draw (7, 1.2) to (8,0.5);
  \draw (7, 1.2) to (6,0.5);
  \draw (4,.5) -- (4,2);
	\node at (4, 0) {$\Pi_a$};
	\node at (6, 0) {$\Pi_a$};
	\node at (8,0) {$\Pi_b$};
  \draw (8,-2) -- (8,-.5);
  \draw (4, -.5) arc[start angle=-180,end angle=0,radius=1];
	\node at (8, -2.5) {$\Pi_b$};
\end{tikzpicture}
\]
to denote the two dualisations of the $q$-admissible map in $\Hom_{\TLJ_n}(\Pi_a \otimes \Pi_b,  \Pi_c)$ to the ($q$-admissible) maps in $\Hom_{\TLJ_n}(\Pi_a,  \Pi_c \otimes \Pi_b)$ and $\Hom_{\TLJ_n}(\Pi_b,  \Pi_a \otimes \Pi_c)$ respectively.
Once again it follows from \eqref{eq: fusion rule} and the semi-simple property of $\TLJ_n$ that 
\begin{equation} \label{eqn: hom space inverted trivalent}
\Hom_{\TLJ_n}(\Pi_a, \Pi_c \otimes \Pi_b) = 
\begin{cases}
\bbC\{\itri\}, &\text{ if } \Pi_a \overset{\oplus}{\subseteq} \Pi_c \otimes \Pi_b; \\
0,  &\text{ otherwise}.
\end{cases}
\end{equation}
It also follows from the rotational invariant property that we have the following relation between the $q$-admissible maps:
\begin{equation} \label{frobenius rel}
\begin{tikzpicture}[scale = 0.4] 
	\node at (-2, 2.5) {$\Pi_b$};
	\node at (1, 2.5) {$\Pi_d$};
  \draw (-2, 1.2) to (-2,2);
  \draw (-2, 1.2) to (-1,0.5);
  \draw (-2, 1.2) to (-3,0.5);
  \draw (1,.5) -- (1,2);
	\node at (-3, 0) {$\Pi_a$};
	\node at (-1, 0) {$\Pi_c$};
	\node at (1,0) {$\Pi_d$};
  \draw (-3,-2) -- (-3,-.5);
  \draw (0, -1.3) to (-1,-.5);
  \draw (0, -1.3) to (1,-.5);
  \draw (0, -1.3) to (0,-2);
	\node at (-3, -2.5) {$\Pi_a$};
	\node at (0, -2.5) {$\Pi_e$};
  \node at (2.5, 0) {$=$};
	\node at (4, 2.5) {$\Pi_b$};
	\node at (7, 2.5) {$\Pi_d$};
  \draw (4,.5) -- (4,2);
  \draw (7, 1.2) to (7,2);
  \draw (7, 1.2) to (8,0.5);
  \draw (7, 1.2) to (6,0.5);
	\node at (4, 0) {$\Pi_b$};
	\node at (6, 0) {$\Pi_c$};
	\node at (8,0) {$\Pi_e$};
  \draw (5, -1.3) to (4,-.5);
  \draw (5, -1.3) to (6,-.5);
  \draw (5, -1.3) to (5,-2);
  \draw (8,-2) -- (8,-.5);
	\node at (5, -2.5) {$\Pi_a$};
	\node at (8, -2.5) {$\Pi_e$};
\end{tikzpicture}
\end{equation}
Finally, given two $q$-admissible triples $(a,b,c)$ and $(d,b,c)$, we have that
\begin{equation} \label{constant multiple identity}
\begin{tikzpicture}[scale = 0.4] 
  \node at (0,1.5) {$\Pi_d$};
  \node at (-1,-1.5) {$\Pi_b$};
  \node at (1,-1.5) {$\Pi_c$};
  \draw (0, 0) to (0,1);
  \draw (0, 0) to (1,-1);
  \draw (0, 0) to (-1,-1);
  
  \node at (0,-4.5) {$\Pi_a$};
  \draw (0, -3) to (0,-4);
  \draw (0, -3) to (1,-2);
  \draw (0, -3) to (-1,-2);
  
  \node at (3, -1.5) {$=$};
  
  \node at (8, -1.5) {$
  	\begin{cases}
  	0, &\text{ if } a\neq d; \\
  	\frac{\theta(a,b,c)}{[a+1]_q} \id, &\text{ if } a = d
  	\end{cases}
  	$};
\end{tikzpicture},
\end{equation}
where $\frac{\theta(a,b,c)}{[a+1]_q} \in \bbC$ is some non-zero constant (see \cite[Definition 3.2.1]{chen2014temperleylieb}) whose actual value will not matter to us.

Throughout this paper, we will also use the symbols `` $\cap$ '', `` $\cup$ '', `` $\tri{}$ '' and `` $\itri{}$ '' to represent these $q$-admissible maps during calculations whenever the domain and codomain are clear from context.

\subsection{Chebyshev polynomials and fusion ring}
Let $\Delta_k(d)$ denote the polynomial (in variable $d$) defined by the following recurrence relation
\[
\Delta_0(d) = 1, \quad \Delta_1(d) = d, \quad \Delta_{k+1}(d) = d \Delta_k(d) - \Delta_{k-1}(d).
\]
%
The fusion rule \eqref{eq: fusion rule} allows us to identify the fusion ring of $\TLJ_n$ with
\[
K_0(\TLJ_n) \cong \Z[d]/\< \Delta_{n-1}(d) \>,
\]
where each basis element $[\Pi_k]$ is mapped to $\Delta_k(d)$.

\subsection{Deligne's tensor product} \label{subsec: Deligne tensor}
The following notion of tensor product of monoidal categories will be useful later on.
\begin{definition}
Let $\cC$ and $\cC'$ be fusion categories.
The \emph{(Deligne's) tensor product} $\cC \bigboxtimes \cC'$ is the fusion category with objects and morphism spaces given as follows:
\begin{enumerate}
\item Obj$(\cC \bigboxtimes \cC')$ consists of finite direct sums of the formal objects $A \boxtimes A'$, with $A$ and $A'$ objects in $\cC$ and $\cC'$ respectively; 
\item $\Hom_{\cC \boxtimes \cC'} \left(\bigoplus_i A_i \boxtimes A_i', \bigoplus_j B_j \boxtimes B_j'\right) = \bigoplus_{i,j} \Hom_\cC \left( A_i, B_j \right) \otimes \Hom_{\cC'} \left( A_i', B_j' \right)$.
\end{enumerate}
The monoidal structure on $\cC \bigboxtimes \cC'$ is defined component-wise by
\[
(A \boxtimes A') \otimes (B \boxtimes B') := (A \otimes B) \boxtimes (A' \otimes B'),
\] 
with monoidal unit $\1_\cC \boxtimes \1_{\cC'}$ and direct sums in each $\boxtimes$-component are distributive over $\boxtimes$.
The simple objects are given by $X \boxtimes X'$ for $X$ and $X'$ simple objects in $\cC$ and $\cC'$ respectively.
Given a fixed choice of simple representatives $\Irr(\cC)$ and $\Irr(\cC')$, the simple representatives of $\cC \bigboxtimes \cC'$ will be chosen as follows:
\[
\Irr(\cC \bigboxtimes \cC') := \{ A \boxtimes A' \mid A \in \Irr(\cC), A' \in \Irr(\cC')\}.
\]
\end{definition}
Given a finite set $S$ and fusion category $\cC_s$ for each $s \in S$, we define the fusion category $\bigboxtimesp{}{s \in S} \cC_s$  in a similar fashion.
It is easy to see that the fusion ring of $\bigboxtimesp{}{s \in S} \cC_s$ is just a tensor product of the individual components
\[
K_0 \left(\bigboxtimesp{}{s \in S} \cC_s \right) \cong \bigotimes_{s \in S} K_0(\cC_s),
\]
where the tensor products of the fusion rings are taken over $\Z$.
\begin{remark}
The Deligne's tensor product for abelian categories is by definition a universal object; when it exists, it is unique up to a unique isomorphism. 
The above ``naive'' tensor product of categories is just an explicit realisation of the Deligne's tensor product for fusion categories (which does not work for general tensor categories); see for example \cite[Chapter 5]{Row19}.
\end{remark}

\section{Coxeter quiver representations in fusion categories}\label{sec: Coxeter quiver and rep}

\subsection{Coxeter quivers}
Recall that a \emph{quiver} $\check{Q}$ is a directed graph.
Namely, it has a set of vertices $\check{Q}_0$ and a set of arrows $\check{Q}_1$ equipped with two maps $s, t: \check{Q}_1 \ra \check{Q}_0$ known as the source and target map respectively, providing the orientation.
A \emph{path} of a quiver (of length $n$) is a sequence of arrows $(\alpha_{n}, \alpha_{n-1}, \cdots, \alpha_1)$ satisfying $t(\alpha_i) = s(\alpha_{i+1})$.
Notice that by convention our path goes from ``right to left'':
\[\begin{tikzcd}
	\bullet & \bullet & {} & \cdots & {} & \bullet & \bullet
	\arrow[from=1-3, to=1-2, "\alpha_{n-1}"]
	\arrow[from=1-2, to=1-1, "\alpha_n"]
	\arrow[from=1-7, to=1-6, "\alpha_1"]
	\arrow[from=1-6, to=1-5, "\alpha_2"]
\end{tikzcd}\]
In this paper, all quivers are assumed to be acyclic and finite (both $\check{Q}_0$ and $\check{Q}_1$ are finite sets).

We are interested in the following generalised setting:
\begin{definition} \label{defn: Coxeter quiver}
A \emph{Coxeter quiver} $Q=(Q_0,Q_1,s,t,v)$ consists of a quiver $(Q_0,Q_1,s,t)$ equipped with an arrow label map $v: Q_1 \ra \{ n \in \mathbb{N} \mid n \geq 3 \}$. 
\end{definition}
\begin{example}
A Coxeter quiver $Q$ with $Q_0 = \{x,y,z\}$, $Q_1=\{\alpha, \alpha', \beta, \gamma\}$, $s(\alpha) = s(\alpha') = s(\gamma) = x$, $t(\alpha) = t(\alpha') = s(\beta) = y$ and $t(\beta) = t(\gamma) = z$ will be drawn as follows.
\[\begin{tikzcd}
	&& y \\
	\\
	x &&&& z
	\arrow["{v(\alpha)=4}", shift left=1, from=3-1, to=1-3]
	\arrow["{v(\alpha')=8}"'{pos=0.6}, shift right=1, from=3-1, to=1-3]
	\arrow["{v(\beta)=3}", from=1-3, to=3-5]
	\arrow["{v(\gamma) = 3}"', from=3-1, to=3-5]
\end{tikzcd}\]
When we do not wish to name the vertices and arrows, we shall use bullets for the vertices and simply write the labels above the arrows, where we use the ``Coxeter graph convention'' that an arrow without label is always assumed to have label `3'.
For example, the Coxeter quiver above shall be drawn as:
\[\begin{tikzcd}
	&& \bullet \\
	\\
	\bullet &&&& \bullet
	\arrow["4", shift left=1, from=3-1, to=1-3]
	\arrow["8"', shift right=1, from=3-1, to=1-3]
	\arrow[from=1-3, to=3-5]
	\arrow[from=3-1, to=3-5]
\end{tikzcd}\]
In particular, a Coxeter quiver with only label `3' will look just like a classical quiver.
\end{example}

\begin{remark}
The reader who is familiar with Coxeter theory may wonder why the label $\infty$ is not included.
As it will become clear later during the consideration of representations (\cref{defn: coxeter quiver representation}), an arrow with label $n$ should be thought of as weighted by $2\cos(\pi/n)$; the object $\nPi[v(\alpha)]_{\scriptscriptstyle v(\alpha)-3}$ appearing in the definition is an object with quantum dimension (equivalently Perron-Frobenius dimension) equal to $2\cos(\pi/n)$.
As such, the $n \ra \infty$ case (where $2\cos(\pi/n) \ra 2$) should be represented by double arrows
\begin{tikzcd}
	\bullet & \bullet 
	\arrow[from=1-1, to=1-2, shift right = 1]
	\arrow[from=1-1, to=1-2, shift left = 1]
\end{tikzcd}.
This coincides with the classical convention where the affine $\hat{A}_1$ quiver is just the Kronecker quiver.
\end{remark}

\begin{remark}
Note that a Coxeter quiver is not the same as a valued quiver -- a valued quiver has its arrows labelled by a pair of positive integers.
One should think of Coxeter quivers as being associated to \emph{possibly non-integral}, but symmetric \emph{Coxeter} matrices; whereas valued quivers are associated to \emph{integral}, but only symmetrisable \emph{Cartan} matrices.
\end{remark}

\subsection{Representations of Coxeter quivers}\label{subsec: Coxeter quiver rep}
Let $Q=(Q_0,Q_1,s,t,v)$ be a Coxeter quiver and let $\img(v) \subset \{3,4,5,...\}$ be the image of $v$.
We shall denote
\[
\cC_n := 
	\begin{cases}
	\TLJ_n, &\text{when $n$ is even}; \\
	\TLJ^{even}_n, &\text{when $n$ is odd}
	\end{cases}
\]
with $\TLJ_n$ and $\TLJ_n^{even}$ defined in \cref{subsection: TLJ}.

We associate a fusion category $\cC(Q)$ to $Q$ defined by (see \cref{subsec: Deligne tensor} for definition of $\bigboxtimes$)
\[
\cC(Q) := 
\bigboxtimesp{}{n \in \img(v)} \cC_{n}.
\]
We use $\1_{\cC(Q)}$ to denote the monoidal unit of $\cC(Q)$, dropping the subscript whenever the category $\cC(Q)$ is clear from context.
To differentiate between the simple objects in different $\cC_{n}$, we shall use $\nPi_i$ to denote the simple objects in $\cC_{n}$.

Note that given $J \subseteq \img(v)$, $\bigboxtimes_{n \in J} \cC_n$ is naturally identified with a full fusion subcategory of $\cC(Q)$.
As such, for $A \in \bigboxtimes_{n \in J} \cC_n$, we shall abuse notation and also use $A$ to denote its corresponding object in $\cC(Q)$ under the embedding.
In particular, $\nPi_i$ will also be used to denote the simple object $\nPi_i \boxtimes \left( \bigboxtimes_{j \in \img(v) \setminus \{n\}} \nPi[j]_0 \right) \in \cC(Q)$.

\begin{definition}\label{defn: coxeter quiver representation}
A \emph{representation} $V$ of a Coxeter quiver $Q$ consists of the following datum:
\begin{itemize}
\item for each vertex $i \in Q_0$, an object $V_i \in \cC(Q)$; and
\item for each arrow $i \xra{v(\alpha)} j$ in $Q$, a morphism $V_\alpha$ in $\cC(Q)$
\[
V_\alpha: \nPi[v(\alpha)]_{v(\alpha)-3} \otimes V_i \ra V_j.
\]
\end{itemize}
A \emph{morphism of representations} $f: V \ra W$ is a collection of map $f_i: V_i \ra W_i$ for each vertex $i$ such that $f_j \circ V_\alpha = W_\alpha \circ (\id \otimes f_i)$ for each arrow $i \xra{v(\alpha)} j$.
Together the objects and morphisms form the \emph{category of representations of $Q$}, which we denote by $\rep(Q)$.
\end{definition}
Note that when $Q$ only has label `3', we have $\cC(Q) = \TLJ_3^{even} \cong \vec_{\bbC}$ and the definition above coincides with the usual definition of representations (and morphisms) of classical quiver representations (over $\bbC$).

As in the case for classical quivers, $\rep(Q)$ is an abelian category induced by the abelian structure of the underlying $\cC(Q)$: the direct sums, kernels and cokernels are taken component wise.
In particular, a representation is also said to be \emph{indecomposable} if it can not be given as a direct sum of two non-trivial representations (up to isomorphism).
Moreover, each object $A$ of $\cC(Q)$ acts on a representation $V$ by tensoring (on the right) component wise: on each vertex $i$ the resulting representation $V\otimes A$ has $(V \otimes A)_i := V_i \otimes A$ and on each arrow $i \xra{v(\alpha)} j$ we now have $(V\otimes A)_\alpha := V_\alpha \otimes \id_A$.
Similarly, $A$ acts on morphisms $f: V \ra W$ by sending each $f_i$ to $f_i \otimes \id_A$.
In other words, $\rep(Q)$ is a right \emph{module category} over $\cC(Q)$ (see \cref{defn: module category}).
\begin{example}\label{eg:I2(5)Coxeterquiver}
Consider the Coxeter quiver $Q:= \begin{tikzcd}[column sep=large] x \ar[r, "v(\alpha)=5"] & y \end{tikzcd}$.
In this case, $\cC(Q) = \TLJ^{even}_5$ has two simple objects $\1 := \Pi_0$ and $\Pi := \Pi_2$, with fusion rule
\[
\Pi \otimes \Pi \cong \1 \oplus \Pi.
\]
(The category $\cC(Q)$ is also known as the Fibonacci category.)
A representation $V$ of $Q$ consists of a morphism $V_\alpha: \Pi \otimes V_x \ra V_y$ in $\cC(Q)$, where $V_x$ and $V_y$ are objects in $\cC(Q)$.
For example, the morphism $\cap: \Pi \otimes \Pi \ra \1$ given by the counit of the self-duality of $\Pi$ defines a representation $V$ with $V_\alpha = \cap$, $V_x = \Pi$ (not $\Pi \otimes \Pi$) and $V_y = \1$. 
Note that $V$ is indecomposable since both $V_x = \Pi$ and $V_y = \1$ are indecomposable and $V_\alpha = \cap \neq 0$.
We can also apply the action of $\Pi \in \cC(Q)$ to obtain other representations.
For example, with the representation $V$ above, $V \otimes \Pi$ is the representation defined by the morphism 
\[
\cap \otimes \id_\Pi: (\Pi \otimes \Pi) \otimes \Pi = \Pi \otimes (\Pi \otimes \Pi) \ra \1 \otimes \Pi = \Pi,
\]
so that $(V\otimes \Pi)_x = \Pi \otimes \Pi$, $(V\otimes \Pi)_y = \1 \otimes \Pi = \Pi$ and $(V\otimes \Pi)_\alpha = \cap \otimes \id_\Pi$.
\end{example}

\begin{remark}
As we shall see in \cref{subsec: path algebra}, $\rep(Q)$ is equivalent to the category of modules over a path algebra associated to $Q$ (\cref{thm: rep and module equiv}), just as in the classical case.
\end{remark}

\section{Unfolding and Gabriel's classification for Coxeter quivers}\label{sec: unfolding and gabriel classification}
In this section we shall prove our first main theorem, namely a classification of Coxeter quivers which has only finitely many indecomposable representations (up to isomorphism).
This will be an easy consequence of a relation between representations of Coxeter quivers and representations of their unfolded classical quivers.
Throughout, $Q = (Q_0, Q_1, s, t, v)$ denotes a Coxeter quiver.

\subsection{Aside on \texorpdfstring{$\cC(Q)$}{C(Q)}} \label{subsec: C(Q) properties}
Let us describe some properties of $\cC(Q)$ that we will require later on.
As with $\TLJ_n$ and $\TLJ^{even}_n$, the representatives of isomorphism classes of simple objects in $\cC(Q)$ are given by:
\[
\Irr(\cC(Q)) = \left\{ \bigboxtimes_{n \in \img(v)} A(n) \mid A(n) \in \Irr(\cC_n) \right\}. 
\]
For $A = \bigboxtimes_{n \in \img(v)} A(n) \in \Irr(\cC(Q))$ and $k \in \img(v)$, we refer to $A(k) \in \cC_k$ as the \emph{$k$th (tensor) component} of $A$.
On the other hand, we define $\bar{A}(k)$ to be the ``complement'' of $A(k)$:
\[
\bar{A}(k) :=  \bigboxtimes_{n \in \img(v) \setminus \{k\}} A(n) \in \bigboxtimes_{n \in \img(v) \setminus \{k\}} \cC_n.
\]
In particular, we get that
\[
A = A(k) \otimes \bar{A}(k) \in \cC(Q),
\]
where $A(k)$ and $\bar{A}(k)$ are both viewed as objects in $\cC(Q)$ through the natural embedding of $\cC_k$ and $\bigboxtimes_{n \in \img(v) \setminus \{k\}} \cC_n$ inside $\cC(Q)$ respectively.
Note that the equation above is indeed an equality since $\cC_n$, and hence $\cC(Q)$, are strict monoidal categories.

Let $B,C\in \Irr(\cC(Q))$.
By the definition of $\cC(Q)$ we have that
\[
\Hom_{\cC(Q)}(\nPi[k]_{k-3} \otimes B, C) = \Hom_{\cC_k}(\nPi[k]_{k-3} \otimes B(k), C(k)) \otimes \left(\bigotimes_{n \in \img(v) \setminus \{k\}} \Hom_{\cC_n}(B(n), C(n)) \right).
\]
Notice that the following conditions are all equivalent:
\begin{equation}\label{eqn:equivcondition}
\begin{cases}
C \overset{\oplus}{\subseteq} \nPi_{n-3} \otimes B;\\
\Hom_{\cC(Q)}(\nPi_{n-3} \otimes B, C) = \bbC\{\tri \otimes \id_{\bar{B}(n)}\} \neq 0; \\
C(n) \overset{\oplus}{\subseteq} \nPi_{n-3} \otimes B(n) \text{ and } \bar{B}(n) = \bar{C}(n); \text{ and }\\
\Hom_{\cC_n}(\nPi_{n-3} \otimes B(n), C(n)) = \bbC\{\tri\} \neq 0 \text{ and } \bar{B}(n) = \bar{C}(n),
\end{cases}
\end{equation}
where $\tri$ denotes the $q$-admissible morphism in $\Hom_{\cC_n}(\nPi_{n-3} \otimes B(n), C(n))$.
In particular, we have the following variant of \eqref{eqn: hom space trivalent} for $\cC(Q)$:
\begin{equation} \label{eqn: C(Q) hom space tri}
\Hom_{\cC(Q)}(\nPi_{n-3} \otimes B, C) = 
\begin{cases}
\bbC\{\tri \otimes \id_{\bar{B}(n)}\}, &\text{ if \eqref{eqn:equivcondition} holds};\\
0,  &\text{ otherwise}.
\end{cases}
\end{equation}

\subsection{From Coxeter quivers to classical quivers} \label{subsec: unfolding}
To each Coxeter quiver $Q$, we shall assign an \emph{unfolded} classical quiver $\check{Q} = (\check{Q}_0, \check{Q}_1,s,t)$ as follows:
\begin{itemize}
\item its set of vertices $\check{Q}_0$ is given by the set $\Irr(\cC(Q)) \times Q_0$;
\item Given an arrow $i \xra{v(\alpha)} j$ in $Q$, we construct an arrow $(B,i) \xra{\beta} (C,j)$ for each pair of simple objects $B$ and $C$ satisfying the condition that $C \overset{\oplus}{\subseteq} \nPi[v(\alpha)]_{v(\alpha) -3} \otimes B$ (or any of the equivalent conditions in \eqref{eqn:equivcondition}).
We call these arrows the \emph{unfolded arrows of $\alpha$} and denote the set of unfolded arrows of $\alpha$ by $\ufld(\alpha)$.
The set of arrows $\check{Q}_1$ of $\check{Q}$ is then defined to be the disjoint union over all sets of unfolded arrows: $\check{Q}_1 = \coprod_{\alpha \in Q_1} \ufld(\alpha)$; two arrows in $\check{Q}$ between the same vertices obtained from unfoldings of different arrows in $Q$ are thus by convention not the same arrows. 
\end{itemize} 
Refer to \cref{fig: unfolding Coxeter quivers} and \cref{eg:unfoldedA4} for some examples of unfoldings.
Notice that even if $Q$ is connected (i.e.\ its underlying quiver is connected), $\check{Q}$ need not be connected.

\begin{figure}[ht]
\adjustbox{scale=0.8,center}{%
\begin{tikzcd}
	& {(\nPi[3]_0\boxtimes\nPi[5]_0, y)} & {(\nPi[3]_0\boxtimes\nPi[5]_0, z)} & {(\nPi[3]_0\boxtimes\nPi[5]_0, w)} \\
	{(\nPi[3]_0\boxtimes\nPi[5]_2, x)} &&&&& {(\nPi[4]_2, x)} & {(\nPi[4]_2, y)} & {(\nPi[4]_2, z)} \\
	& {(\nPi[3]_0\boxtimes\nPi[5]_2, y)} & {(\nPi[3]_0\boxtimes\nPi[5]_2, z)} & {(\nPi[3]_0\boxtimes\nPi[5]_2, w)} && {(\nPi[4]_1, x)} & {(\nPi[4]_1, y)} & {(\nPi[4]_1, z)} \\
	{(\nPi[3]_0\boxtimes\nPi[5]_0, x)} &&&&& {(\nPi[4]_0, x)} & {(\nPi[4]_0, y)} & {(\nPi[4]_1, z)} \\
	\\
	x & y & z & w && x & y & z
	\arrow["5", from=6-1, to=6-2]
	\arrow[from=6-2, to=6-3]
	\arrow[from=6-3, to=6-4]
	\arrow[from=4-1, to=3-2]
	\arrow[from=2-1, to=3-2]
	\arrow[from=2-1, to=1-2]
	\arrow[from=1-2, to=1-3]
	\arrow[from=1-3, to=1-4]
	\arrow[from=3-2, to=3-3]
	\arrow[from=3-3, to=3-4]
	\arrow["4", from=6-6, to=6-7]
	\arrow["4"', from=6-8, to=6-7]
	\arrow[from=4-6, to=3-7]
	\arrow[from=2-6, to=3-7]
	\arrow[from=3-6, to=2-7]
	\arrow[from=3-6, to=4-7]
	\arrow[from=4-8, to=3-7]
	\arrow[from=2-8, to=3-7]
	\arrow[from=3-8, to=2-7]
	\arrow[from=3-8, to=4-7]
\end{tikzcd}
}
\caption{Two examples of unfoldings $\check{Q}$ of Coxeter quivers $Q$, with $\check{Q}$ above $Q$.}
\label{fig: unfolding Coxeter quivers}
\end{figure}

Given an arrow $i \xra{v(\alpha)} j$ in $Q$, its set of unfolded arrows $\ufld(\alpha)$ can be equivalently described as follows, depending on the parity of $n:= v(\alpha)$.
When $n$ is even, $\ufld(\alpha)$ has the following $2(n-2)$ arrows for each simple object $\bar{A}$ in $\Irr\left( \bigboxtimes_{k \in \img(v)\setminus \{n\}} \cC_k \right)$:
\[\begin{tikzcd}[row sep = small, column sep = small]
	{{(\nPi_0 \otimes \bar{A}, i)}} & {{(\nPi_1 \otimes \bar{A}, i)}} & {{(\nPi_{2} \otimes \bar{A}, i)}} & {\phantom{(\nPi_{3} \otimes \bar{A}, i)}} & {{(\nPi_{n-2} \otimes \bar{A}, i)}} \\
	&&& \cdots \\
	{{(\nPi_{n-2} \otimes \bar{A}, j)}} & {{(\nPi_{n-3} \otimes \bar{A}, j)}} & {{(\nPi_{n-4} \otimes \bar{A}, j)}} & {\phantom{(\nPi_{n-5} \otimes \bar{A}, j)}} & {{(\nPi_{0} \otimes \bar{A}, j)}}
	\arrow[from=1-1, to=3-2, crossing over]
	\arrow[from=1-3, to=3-2, crossing over]
	\arrow[from=1-3, to=3-4, crossing over]
	\arrow[from=1-4, to=3-5, crossing over]
	\arrow[from=1-5, to=3-4, crossing over]
	\arrow[from=3-3, to=1-4, crossing over]
	\arrow[from=1-2, to=3-1, crossing over]
	\arrow[from=1-2, to=3-3, crossing over]
\end{tikzcd}.\]
When $n$ is odd, $\ufld(\alpha)$ has instead the following $(n-2)$ arrows for each $\bar{A}$ in $\Irr\left( \bigboxtimes_{k \in \img(v)\setminus \{n\}} \cC_k \right)$:
\[\begin{tikzcd}[row sep = tiny]
	{{(\nPi_0 \otimes \bar{A}, i)}} && {{(\nPi_{2} \otimes \bar{A}, i)}} & {\phantom{(\nPi_{3} \otimes \bar{A}, i)}} \\
	&&& \cdots \\
	& {{(\nPi_{n-3} \otimes \bar{A}, j)}} && {\phantom{(\nPi_{n-5} \otimes \bar{A}, j)}} & {{(\nPi_{0} \otimes \bar{A}, j)}}
	\arrow[from=1-1, to=3-2]
	\arrow[from=1-3, to=3-2]
	\arrow[from=1-3, to=3-4]
	\arrow[from=1-4, to=3-5]
\end{tikzcd}.\]

\subsection{Abelian equivalence between \texorpdfstring{$\rep(Q)$ and $\rep(\check{Q})$}{rep(Q) and its unfolding}}
Let $Q$ be a Coxeter quiver and $\check{Q}$ its unfolded quiver.
We shall also denote the category of representations of $\check{Q}$ (over $\bbC$) by the same notation $\rep(\check{Q})$.
We claim that the two abelian categories $\rep(Q)$ and $\rep(\check{Q})$ are equivalent.
Before we prove this let us set up some notation.

Throughout this subsection, we simplify our notation and use $\cC:= \cC(Q)$ and $I := \Irr(\cC)$.
Since $\cC$ is semi-simple, every object $X \in \cC$ is isomorphic to a direct sum of the simple objects in $I$.
This is called the \emph{semi-simple decomposition} of $X$ (with respect to $I$).
For each simple object $A \in I$, we use $A(X)$ to denote the direct sum of \emph{all} $A$ (i.e.\ with multiplicity) appearing in the semi-simple decomposition of $X$.
In particular, the semi-simple decomposition of $X$ is given by
\[
X \cong \bigoplus_{A \in I} A(X).
\]
While the isomorphism realising the semi-simple decomposition of $X$ may not be unique, note that the decomposition above is unique.
As such, we shall use $|A(X)|$ to denote the number of copies of $A$ in $|A(X)|$, so that $A(X) = A^{\oplus |A(X)|}$.

We are now ready to prove our theorem.
\begin{theorem} \label{thm: rep Q and checkQ equiv}
There exists an exact functor $\cU: \rep(Q) \ra \rep(\check{Q})$ that is fully faithful and essentially surjective. In other words, $\rep(Q)$ and $\rep(\check{Q})$ are equivalent as abelian categories.
\end{theorem}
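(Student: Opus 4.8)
The plan is to construct the functor $\cU$ explicitly on objects and morphisms, using the semisimple decomposition in $\cC = \cC(Q)$, and then verify exactness, full faithfulness, and essential surjectivity more or less directly. First I would define $\cU$ on objects: given $V \in \rep(Q)$, for each vertex $i \in Q_0$ and each simple $A \in I = \Irr(\cC)$, set $\cU(V)_{(A,i)} := \bbC^{\,|A(V_i)|}$, i.e. the multiplicity space of $A$ in the semisimple decomposition $V_i \cong \bigoplus_{A \in I} A^{\oplus |A(V_i)|}$. For each arrow $i \xra{v(\alpha)} j$ in $Q$ and each unfolded arrow $\beta : (B,i) \to (C,j)$ in $\ufld(\alpha)$ (so $C \overset{\oplus}{\subseteq} \nPi[v(\alpha)]_{v(\alpha)-3} \otimes B$), the morphism $V_\alpha : \nPi[v(\alpha)]_{v(\alpha)-3} \otimes V_i \to V_j$ must be decoded into a linear map $\bbC^{|B(V_i)|} \to \bbC^{|C(V_j)|}$. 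The key point, which follows from \eqref{eqn: C(Q) hom space tri} together with the fusion rule \eqref{eq: 1 b fusion rule}, is that $\Hom_{\cC}(\nPi[v(\alpha)]_{v(\alpha)-3} \otimes A, A')$ is one-dimensional, spanned by a fixed $q$-admissible map $\tri_{A,A'}$, precisely when $A' \overset{\oplus}{\subseteq} \nPi[v(\alpha)]_{v(\alpha)-3} \otimes A$, and zero otherwise; and moreover no simple appears with multiplicity $>1$ in these tensor products. So, after fixing once and for all a splitting $V_i \cong \bigoplus_A A^{\oplus|A(V_i)|}$ for every object, the map $V_\alpha$ is recorded by a matrix whose $(B,C)$-block is a scalar multiple of $\tri_{B,C}$ — and that scalar matrix is exactly $\cU(V)_\beta$.

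The crucial structural input is that $- \otimes \nPi[n]_{n-3}$ (equivalently the relevant hom-spaces) behave like a "multiplicity-free branching rule": this is what makes $\cU$ well-defined and faithful. Concretely, I would prove a lemma stating that for simple $A, A'$ in $\cC$ and $n \in \img(v)$, precomposition with the canonical isomorphism $\nPi[n]_{n-3} \otimes \bigl(\bigoplus_k A_k\bigr) \cong \bigoplus_k (\nPi[n]_{n-3} \otimes A_k)$ and postcomposition with a chosen splitting identifies $\Hom_{\cC}(\nPi[n]_{n-3}\otimes X, Y)$ with $\bigoplus_{(B,C)} \Hom_\bbC(\bbC^{|B(X)|}, \bbC^{|C(Y)|})$, the sum over pairs with $C \overset{\oplus}{\subseteq} \nPi[n]_{n-3}\otimes B$. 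This is where \eqref{constant multiple identity} (the "bubble = scalar" relation) and the one-dimensionality \eqref{eqn: hom space trivalent} do the work: any morphism out of $\nPi[n]_{n-3}\otimes X$ is, block by block, a scalar times a $q$-admissible map, and composing $q$-admissible maps against their duals reduces to scalars. Given this lemma, functoriality of $\cU$ (it respects composition of representation morphisms), exactness (kernels and cokernels in $\rep(Q)$ are computed vertexwise in $\cC$, and multiplicity spaces are exact in the semisimple category $\cC$), and full faithfulness (a morphism $f : V \to W$ in $\rep(Q)$ is the same data as its components $f_i : V_i \to W_i$, which by semisimplicity are the same as their multiplicity-space matrices, and the intertwining condition $f_j \circ V_\alpha = W_\alpha \circ (\id \otimes f_i)$ translates blockwise into the intertwining condition for $\cU(f)$ using the one-dimensionality again) all follow fairly mechanically.

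For essential surjectivity, given $W \in \rep(\check{Q})$ I would build $V \in \rep(Q)$ by reversing the recipe: set $V_i := \bigoplus_{A \in I} A^{\oplus \dim W_{(A,i)}}$, and for each arrow $i \xra{v(\alpha)} j$ define $V_\alpha : \nPi[v(\alpha)]_{v(\alpha)-3} \otimes V_i \to V_j$ as the sum over unfolded arrows $\beta : (B,i)\to(C,j)$ of $(W_\beta)$-entries times the $q$-admissible maps $\tri_{B,C}$, using the canonical decomposition of $\nPi[v(\alpha)]_{v(\alpha)-3} \otimes V_i$; then $\cU(V) \cong W$ by construction. The main obstacle — the place where care is genuinely needed rather than routine bookkeeping — is the \emph{coherence of choices of splittings}: $\cU$ depends a priori on the chosen semisimple decompositions $V_i \cong \bigoplus_A A^{\oplus|A(V_i)|}$, and to get an honest functor one must either fix such a splitting functorially (impossible in general) or, more cleanly, check that different choices yield naturally isomorphic functors and that composition is respected up to the canonical isotopy/rotation relations recorded after \eqref{eq: fusion rule}. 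I expect this to be handled by appealing to the rigidity of the one-dimensional hom-spaces: any two splittings differ by an invertible scalar matrix on each multiplicity space, and the $q$-admissible maps transform correctly under this because the trivalent diagrams are rotation/isotopy invariant and satisfy \eqref{frobenius rel} and \eqref{constant multiple identity}. The rest — exactness and the equivalence statement — then drops out formally.
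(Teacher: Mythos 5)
Your proposal is correct and follows essentially the same route as the paper: fix a semisimple splitting of each object, record $V_\alpha$ blockwise as scalar matrices against the one-dimensional hom-spaces \eqref{eqn: C(Q) hom space tri}, and verify functoriality, full faithfulness, essential surjectivity and exactness directly. The coherence-of-splittings worry you raise is handled in the paper simply by fixing the isomorphisms $\varphi_X$ once and for all at the start of the proof, so no naturality-of-choices argument is needed.
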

\begin{proof}
Throughout this proof, let us fix the following datum, which we will use to construct the functor $\cU$.
\begin{itemize}
\item For each object $X$ in $\cC$, we fix isomorphisms $\varphi_X: X \rightleftarrows \bigoplus_{A \in I} A(X) : \varphi_X^{-1}$ identifying $X$ with its semi-simple decomposition.
\end{itemize}
A hook arrow $\hookrightarrow$ and $\iota$ will always denote the canonical inclusion of a summand into a direct sum containing it; on the other hand a two-headed arrow $\twoheadrightarrow$ and $\rho$ will always denote the canonical projection of a direct sum onto one of its summand. 
It will be clear from the context which summand and direct sum we are referring to.

We shall construct the functor $\cU: \rep(Q) \ra \rep(\check{Q})$ as follows.
\begin{itemize}
\item On objects: \\
	Given a representation $V$ of $Q$, we will define a representation $U:= \cU(V)$ of $\check{Q}$. 
	On each vertex $(A,i) \in \check{Q}_0$, we set $U_{(A,i)} := \bbC^{|A(V_i)|}$.
	Let $i \xra{v(\alpha)} j$ be an arrow of $Q$ and denote $n := v(\alpha)$.
	Using the semi-simple decomposition of $V_i$ and $V_j$, the morphism $V_{\alpha}: \nPi_{n-3} \otimes V_i \ra V_j$ induces the morphism $\tilde{V}_\alpha$ defined by the composition:
	\[
	\tilde{V}_\alpha: \bigoplus_{A\in I} \nPi_{n-3} \otimes A(V_i) = \nPi_{n-3} \otimes \left(\bigoplus_{A\in I} A(V_i) \right) \xra{\id \otimes \varphi_{V_i}^{-1}} \nPi_{n-3} \otimes V_i \xra{V_\alpha} V_j \xra{\varphi_{V_j}} \bigoplus_{A \in I} A(V_j).
	\]
	Note that $\tilde{V}_\alpha$ is completely determined by the component morphisms $(\tilde{V}_\alpha)_{B,C}$ for each pair of simples $B,C$ such that $C \overset{\oplus}{\subseteq} \nPi_{n-3}\otimes B$, which by the construction of $\check{Q}$ is equivalent to having
	an arrow $(B, i) \xra{\beta} (C,j)$ in $\check{Q}$ that lives in $\ufld(\alpha)$ -- the other component morphisms are all zero.
	Moreover, for each such pair the morphism $(\tilde{V}_\alpha)_{B,C}$ is by definition a $|B(V_i)| \times |C(V_j)|$ matrix with each entry an element in $\Hom_{\cC}(\nPi_{n-3} \otimes B, C) = \bbC\{ \tri \otimes \id_{\bar{B}(n)} \}$ (see \eqref{eqn: C(Q) hom space tri}).
	We define $U_\beta$ to be the image of $(\tilde{V}_\alpha)_{B,C}$ in $M_{|B(V_i)| \times |C(V_j)|}(\bbC) = \Hom_{\bbC}(\bbC^{|B(V_i)|}, \bbC^{|C(V_j)|})$ by sending $\tri \otimes \id_{\bar{B}(n)} \mapsto 1 \in \bbC$.
\item On morphisms:\\
	Let $f: V \ra W$ be a morphism of representations of $Q$.
	For each $f_i: V_i \ra W_i$, define 
	\[
	\tilde{f_i}:= \varphi_{W_i} \circ f_i \circ \varphi_{V_i}^{-1} \in \Hom_\cC \left(\bigoplus_{A \in I} A(V_i), \bigoplus_{A \in I} A(W_i) \right).
	\]
	For each simple object $B \in I$, we simplify our notation to $(\tilde{f_i})_B := (\tilde{f_i})_{B,B}$ to denote the component morphism of $\tilde{f_i}$ that maps between the summands $B(V_i) \ra B(W_i)$ (which uniquely determine $\tilde{f_i}$ since all other component morphisms are zero).
	The morphism $(\tilde{f_i})_B$ is by definition a $|B(V_i)| \times |B(W_i)|$ matrix with each entry an element of $\Hom_{\cC}(B,B) = \bbC\{\id_B\}$.
	We define $(\cU(f))_{(B,i)}: \bbC^{|B(V_i)|} \ra \bbC^{|B(W_i)|}$ to be the image of $(\tilde{f_i})_B$ in $M_{|B(V_i)| \times |B(W_i)|}(\bbC) = \Hom_{\bbC}(\bbC^{|B(V_i)|}, \bbC^{|B(W_i)|})$ that sends $\id_B \mapsto 1 \in \bbC$.
\end{itemize}
	Let us first show that $\cU(f)$ does indeed define a morphism of representations. 
	Namely for each arrow $(B, i) \xra{\beta} (C,j)$ in $\check{Q}$ living in $\ufld(\alpha)$ with $n:= v(\alpha)$, we have to show that the following diagram commmutes:
	\[
	\begin{tikzcd}
	\bbC^{|B(V_i)|} = \cU(V)_{(B,i)} \arrow["\cU(V)_{\beta}"]{r} \arrow["\cU(f)_{(B,i)}"']{d} & \cU(V)_{(C,j)} = \bbC^{|C(V_j)|} \arrow["\cU(f)_{(C,j)}"]{d}\\
	\bbC^{|B(W_i)|} = \cU(W)_{(B,i)} \arrow["\cU(W)_{\beta}"]{r} & \cU(W)_{(C,j)} = \bbC^{|C(W_j)|}.
	\end{tikzcd}
	\]
	By the construction of $\cU$, the commutativity of this diagram is equivalent to the commutativity of the (outermost) square diagram formed by the four dashed arrows below:
\[\begin{tikzcd}[column sep=small, row sep=tiny]
	{\nPi_{n-3} \otimes B(V_i)} &&&&&& {C(V_j)} \\
	&&& {\text{(a)}} \\
	&& {\nPi_{n-3} \otimes V_i} && {V_j} \\
	& {\text{(c)}} && {\text{(e)}} && {\text{(d)}} \\
	&& {\nPi_{n-3} \otimes W_i} && {W_j} \\
	&&& {\text{(b)}} \\
	{\nPi_{n-3} \otimes B(W_i)} &&&&&& {C(W_j).}
	\arrow["{(\tilde{V}_\alpha)_{B,C}}", dashed, from=1-1, to=1-7]
	\arrow["{\id \otimes (\varphi_{V_i}^{-1} \circ \iota)}", from=1-1, to=3-3]
	\arrow["{\id \otimes (\tilde{f_i})_B}", dashed, from=1-1, to=7-1]
	\arrow["{(\tilde{f_j})_C}", dashed, from=1-7, to=7-7]
	\arrow["{V_\alpha}", from=3-3, to=3-5]
	\arrow["{\id \otimes f_i}", from=3-3, to=5-3]
	\arrow["{f_j}", from=3-5, to=5-5]
	\arrow["{\rho \circ \varphi_{V_j}}", from=3-5, to=1-7]
	\arrow["{W_\alpha}", from=5-3, to=5-5]
	\arrow["{\rho \circ \varphi_{W_j}}", from=5-5, to=7-7]
	\arrow["{\id \otimes (\varphi_{W_i}^{-1} \circ \iota)}", from=7-1, to=5-3]
	\arrow["{(\tilde{W}_\alpha)_{B,C}}", dashed, from=7-1, to=7-7]
\end{tikzcd}\]
	We claim that all five subdiagrams are commutative.
	Indeed, subdiagram (e) is commutative since $f$ is a morphism of representations;
	the subdiagrams (a) and (b)
	are commutative by the definition of the dashed maps.
	Finally, note that the following diagrams are commutative
	\begin{center}
	\begin{tikzcd}
	B(V_i) \ar[rd, "\varphi_{V_i}^{-1} \circ \iota" {yshift = -5pt}] \ar[ddd, dashed, "(\tilde{f_i})_B"] & \\
	& V_i  \ar[d, "f_i"]  \\
	& W_i  \\
	B(W_i) \ar[ru, "\varphi_{W_i}^{-1} \circ \iota"' {yshift = 5pt}] & 
	\end{tikzcd}
	,
	\begin{tikzcd}
	& C(V_j) \ar[ddd, dashed, "(\tilde{f_j})_C"] \\
	V_j \ar[d, "f_j"] \ar[ru,"\rho \circ \varphi_{V_j}"{yshift = -3pt}] & \\
	W_j \ar[rd,"\rho \circ \varphi_{W_j}"' {yshift = 3pt}] & \\
	& C(W_j)
	\end{tikzcd}
	\end{center}
	since $(\tilde{f}_i)_B$ (resp.\ $(\tilde{f}_j)_C$) is the restriction of $\tilde{f}_i$ (resp.\ $\tilde{f}_j$) to the summands $B(V_i) \ra B(W_i)$ (resp.\ $C(V_j) \ra C(W_j)$).
	The diagram on the right is exactly the subdiagram (d), and by applying the tensor product functor $\nPi_{n-3} \otimes -$ to the left diagram above we obtain subdiagram (c).
	This shows that all the subdiagrams are commutative and thus the dashed square commutes as desired.
	
	The fact that $\cU$ sends identity to identity follows easily from the definition.
	Moreover, $\cU(g_i)\circ\cU(f_i) = \cU(g_i \circ f_i)$ for all $i$ since $(\tilde{g_i})_B \circ (\tilde{f_i})_B = (\widetilde{g_i\circ f_i})_B$ for all simple object $B$.
	This completes the proof that $\cU$ is a functor from $\rep(Q)$ to $\rep(\check{Q})$.
	
The fact that $\cU$ is essentially surjective is clear from construction. 
Using the vector space structure on hom spaces, it is also easy to see that $\cU$ is faithful.

To show fullness of $\cU$, let $g \in \Hom_{\rep(\check{Q})}(\cU(V), \cU(W))$ for $V,W$ representations of $Q$, so that we have $g_{(A,i)} : \bbC^{|A(V_i)|} \ra \bbC^{|A(W_i)|}$ for each simple object $A \in I$ and vertex $i \in Q_0$.
For each vertex $i\in Q_0$ we define $f_i: V_i \ra W_i$ to be the unique morphism determined by
\[
(\tilde{f}_i)_{A} := g_{(A,i)}\cdot \id_A : A(V_i) \ra A(W_i)
\]
for each simple object $A$.
It is clear that if $f = \{f_i\}_{i \in {Q_0}}$ form a morphism of representations of $Q$, then $g$ would be equal to $\cU(f)$ by definition.
As such, we are left to show that $f$ is indeed a morphism of representations of $Q$; namely we are required to show that the diagram below commutes for each arrow $i \xra{v(\alpha)} j$ with $n:= v(\alpha)$
	\[
	\begin{tikzcd}
	\nPi_{n-3} \otimes V_i \ar[r, "V_\alpha"] \ar[d, "\id \otimes f_i"] & V_j \ar[d, "f_j"] \\
	\nPi_{n-3} \otimes W_i \ar[r, "W_\alpha"] & W_j.
	\end{tikzcd}
	\]
Using the semi-simple decomposition of $V_i, V_j, W_i$ and $W_j$, we can instead show that diagram of morphisms between the induced direct sum decomposition is commutative:
\begin{equation} \label{eqn: comm square for unfold fullness}
	\begin{tikzcd}
	\bigoplus_{A \in I} \nPi_{n-3} \otimes A(V_i) \ar[r, "\tilde{V}_\alpha"] \ar[d, "\id \otimes \tilde{f}_i"] & \bigoplus_{A \in I} A(V_j) \ar[d, "\tilde{f}_j"] \\
	\bigoplus_{A \in I} \nPi_{n-3} \otimes A(W_i) \ar[r, "\tilde{W}_\alpha"] & \bigoplus_{A \in I} A(W_j).
	\end{tikzcd}
\end{equation}
Let us start by analysing the composition given by the top right corner:
	\[
	\begin{tikzcd}
	\bigoplus_{A \in I} \nPi_{n-3} \otimes A(V_i) \ar[r, "\tilde{V}_\alpha"] & \bigoplus_{A \in I} A(V_j) \ar[d, "\tilde{f}_j"] \\
	 & \bigoplus_{A \in I} A(W_j).
	\end{tikzcd}
	\]
The morphism $\tilde{f}_j$ is uniquely determined by the component $(\tilde{f}_j)_A$, with all other component morphisms being zero.
On the other hand, $\tilde{V}_{\alpha}$ is determined by the component morphisms $(\tilde{V}_{\alpha})_{B,C}$ for each pair of simples $B,C$ such that $C \overset{\oplus}{\subseteq} \nPi_{n-3} \otimes B$, which is equivalent to having an arrow $(B,i) \xra{\beta} (C,j)$ in $\ufld(\alpha)$.
In total, we have that the composition $\tilde{f}_j \circ \tilde{V}_\alpha$ is uniquely determined by $(\tilde{f}_j \circ \tilde{V}_\alpha)_{B,C}$ for each pair of simples $B,C$ with $(B,i) \xra{\beta} (C,j)$ an arrow in $\ufld(\alpha)$, and
\[
(\tilde{f}_j \circ \tilde{V}_\alpha)_{B,C} = (\tilde{f}_j)_C \circ (\tilde{V}_\alpha)_{B,C}.
\]
By a similar argument, the composition $\tilde{W}_\alpha \circ (\nPi_{n-3} \otimes \tilde{f}_i)$ given by the bottom right corner 
	\[
	\begin{tikzcd}
	\bigoplus_{A \in I} \nPi_{n-3} \otimes A(V_i) \ar[d, "\id \otimes \tilde{f}_i"] &  \\
	\bigoplus_{A \in I} \nPi_{n-3} \otimes A(W_i) \ar[r, "\tilde{W}_\alpha"] & \bigoplus_{A \in I} A(W_j).
	\end{tikzcd}
	\]
is uniquely determined by $(\tilde{W}_\alpha \circ (\nPi_{n-3} \otimes \tilde{f}_i))_{B,C}$ for each pair of simples $B,C$ with $(B,i) \xra{\beta} (C,j)$ an arrow in $\ufld(\alpha)$, and
\[
(\tilde{W}_\alpha \circ (\nPi_{n-3} \otimes \tilde{f}_i))_{B,C} = (\tilde{W}_\alpha)_{B,C} \circ (\nPi_{n-3} \otimes (\tilde{f}_i)_B).
\]
As such, the square in \eqref{eqn: comm square for unfold fullness} commutes if and only if $(\tilde{f}_j)_C \circ (\tilde{V}_\alpha)_{B,C} = (\tilde{W}_\alpha)_{B,C} \circ (\nPi_{n-3} \otimes (\tilde{f}_i)_B)$ for each pair of simples $B,C$ with $(B,i) \xra{\beta} (C,j)$ an arrow in $\ufld(\alpha)$, which amounts to asking for commutativity of the following square
\[
	\begin{tikzcd}[column sep = large]
	\nPi_{n-3} \otimes B(V_i) \ar[r, "(\tilde{V}_\alpha)_{B,C}"] \ar[d, "\id \otimes (\tilde{f}_i)_B"] & C(V_j) \ar[d, "(\tilde{f}_j)_C"] \\
	\nPi_{n-3} \otimes B(W_i) \ar[r, "(\tilde{W}_\alpha)_{B,C}"] & C(W_j).
	\end{tikzcd}
\]
The commutativity of the above square is a direct consequence of the commutativity of the following square:
	\[
	\begin{tikzcd}
	\bbC^{|B(V_i)|} \arrow["\cU(V)_{\beta}"]{r} \arrow["g_{(B,i)}"']{d} & \bbC^{|C(V_j)|} \arrow["g_{(C,j)}"]{d}\\
	\bbC^{|B(W_i)|} \arrow["\cU(W)_{\beta}"]{r} & \bbC^{|C(W_j)|},
	\end{tikzcd}
	\]
which is given to us by the fact that $g$ is a morphism of representations of $\check{Q}$.
This completes the proof that $f$ is morphism of representations of $Q$, which shows that $\cU$ is full.

Finally the exactness of $\cU$ follows directly from the semi-simplicity of both $\cC(Q)$ and $\vec_{\bbC}$.
\end{proof}

\begin{example}\label{eg:unfoldedA4}
We continue with the Coxeter quiver $Q$ from \cref{eg:I2(5)Coxeterquiver}.
The associated unfolded quiver $\check{Q}$ is given by
\[
\check{Q} = \begin{tikzcd}[row sep = small]
	{(\Pi,x)} & {(\Pi,y)} \\
	{(\1, x)} & {(\1,y)}
	\arrow[from=2-1, to=1-2, crossing over]
	\arrow[from=1-1, to=1-2]
	\arrow[from=1-1, to=2-2]
\end{tikzcd},
\]
which is a type $A_4$ quiver.
As such, $\rep(Q) \cong \rep(\check{Q})$ has 10 indecomposable objects, in bijection with the 10 positive roots of the $A_4$ root system.
The 10 indecomposable representations in $\rep(Q)$ and their corresponding indecomposable representations in $\rep(\check{Q})$ in terms of their dimension vectors (each of which uniquely determines the indecomposable representation) are given as follows.
\begin{multicols}{2}
\begin{center}
\begin{enumerate}[(i)]
\item $0: \Pi \otimes \1 \ra 0 
		\ \leftrightsquigarrow \
	\begin{psmallmatrix} 0 & 0 \\ 1 & 0 \end{psmallmatrix}$; \\
\item $0: \Pi \otimes 0 \ra \1 
		\ \leftrightsquigarrow \
	\begin{psmallmatrix} 0 & 0 \\ 0 & 1 \end{psmallmatrix}$; \\
\item $\id_\Pi: \Pi \otimes \1 \ra \Pi
		\ \leftrightsquigarrow \
	\begin{psmallmatrix} 0 & 1 \\ 1 & 0 \end{psmallmatrix}$; \\
\item $\cap: \Pi \otimes \Pi \ra \1
		\ \leftrightsquigarrow \
	\begin{psmallmatrix} 1 & 0 \\ 0 & 1 \end{psmallmatrix}$; \\
\item $\tri : \Pi \otimes \Pi \ra \Pi
		\ \leftrightsquigarrow \
	\begin{psmallmatrix} 1 & 1 \\ 0 & 0 \end{psmallmatrix}$; \\
\item $0: \Pi \otimes \Pi \ra 0 
		\ \leftrightsquigarrow \ 
	\begin{psmallmatrix} 1 & 0 \\ 0 & 0 \end{psmallmatrix}$; \\
\item $0: \Pi \otimes 0 \ra \Pi
		\ \leftrightsquigarrow \
	\begin{psmallmatrix} 0 & 1 \\ 0 & 0 \end{psmallmatrix}$; \\
\item $\id_\Pi \otimes \id_\Pi: \Pi \otimes \Pi \ra \Pi\otimes \Pi
		\ \leftrightsquigarrow \
	\begin{psmallmatrix} 1 & 1 \\ 0 & 1 \end{psmallmatrix}$; \\
\item $\cap \otimes \id_\Pi: \Pi \otimes \Pi\otimes \Pi \ra \Pi
		\ \leftrightsquigarrow \
	\begin{psmallmatrix} 1 & 1 \\ 1 & 0 \end{psmallmatrix}$; \\
\item $\tri \otimes \id_\Pi: \Pi \otimes \Pi\otimes \Pi \ra \Pi\otimes\Pi
		\ \leftrightsquigarrow \
	\begin{psmallmatrix} 1 & 1 \\ 1 & 1 \end{psmallmatrix}$.
\end{enumerate}
\end{center}
\end{multicols}
\end{example}

\subsection{Gabriel's classification for Coxeter quivers}
Following the classical notion, we make the following definition:
\begin{definition}
A Coxeter quiver $Q$ is of \emph{finite type} if $\rep(Q)$ has only finitely many indecomposable objects up to isomorphism.
\end{definition}
The following observation will be useful in the proof of Gabriel's theorem for Coxeter quivers later on.
Let $Q = (Q_0, Q_1, s, t, v)$ be a Coxeter quiver.
A \emph{Coxeter subquiver} $Q'$ of $Q$ is just a subquiver $(Q'_0, Q'_1)$ (namely $Q'_0 \subseteq Q_0$ and $Q'_1 \subseteq Q_1$ with $s(\alpha), t(\alpha) \in Q'_0$ for all $\alpha \in Q'_1$) equipped with the restricted label map $v':= v|_{Q'_1}$.
In particular, $\img(v') \subseteq \img(v)$ and so the associated fusion category $\cC(Q')$ is naturally identified with a fusion subcategory of $\cC(Q)$.
It follows easily that this identifies $\rep(Q')$ with an abelian subcategory of $\rep(Q)$.
As such, if there exists a Coxeter subquiver of $Q$ that is not of finite type, then $Q$ can not be of finite type as well.

We now state the crucial lemma:
\begin{lemma} \label{lem: multilaced quiver not finite type}
The Coxeter quiver 
$
Q':= \begin{tikzcd}
	i & j
	\arrow["m", shift left=1, from=1-1, to=1-2]
	\arrow["n"', shift right=1, from=1-1, to=1-2]
\end{tikzcd}
$
is not of finite type.
\end{lemma}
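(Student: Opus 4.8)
The plan is to reduce to a classical quiver by means of \cref{thm: rep Q and checkQ equiv}: since $\rep(Q') \simeq \rep(\check{Q'})$ as abelian categories, it suffices to show that the unfolded classical quiver $\check{Q'}$ has infinitely many pairwise non-isomorphic indecomposable representations. I will do this by exhibiting inside $\check{Q'}$ a \emph{full} subquiver whose underlying graph contains a cycle (where a double edge counts as a ``cycle of length two''). Extension by zero then provides an additive, fully faithful functor from the representations of that full subquiver into $\rep(\check{Q'})$ --- any arrow of $\check{Q'}$ with both endpoints in the subquiver already belongs to it, so the compatibility conditions involving the remaining arrows are vacuous --- and so it sends pairwise non-isomorphic indecomposables to pairwise non-isomorphic indecomposables; on the other hand, a connected classical quiver with a cyclic underlying graph has infinitely many indecomposables, since by Gabriel's classical theorem every finite-type quiver has a forest as underlying graph. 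So it remains to locate such a subquiver.

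Write $\alpha$ for the arrow of $Q'$ labelled $m$ and $\beta$ for the arrow labelled $n$, and set $P_m := \nPi[m]_{m-3}$ and $P_n := \nPi[n]_{n-3}$, viewed as simple objects of $\cC(Q')$ supported in the respective tensor factor; by the fusion rules each of them is self-dual, so $\1 \overset{\oplus}{\subseteq} P_m \otimes P_m$ and $\1 \overset{\oplus}{\subseteq} P_n \otimes P_n$. If $m = n$, then $\cC(Q') = \cC_m$ and, since $P_m \overset{\oplus}{\subseteq} \nPi[m]_{m-3} \otimes \1$, both $\alpha$ and $\beta$ unfold (among other arrows) to an arrow $(\1, i) \to (P_m, j)$; by the convention that unfoldings of distinct arrows of $Q'$ remain distinct, these are two parallel arrows of $\check{Q'}$, so the full subquiver of $\check{Q'}$ on $\{(\1, i), (P_m, j)\}$ is a Kronecker quiver and we are done.

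If $m \neq n$, put $B := P_m \otimes P_n$, which is the simple object $\nPi[m]_{m-3} \boxtimes \nPi[n]_{n-3}$ of $\cC(Q') = \cC_m \bigboxtimes \cC_n$. Using the description of $\ufld(\alpha)$ and $\ufld(\beta)$ together with the equivalent conditions of \cref{subsec: C(Q) properties}, the self-duality facts above show that $\check{Q'}$ contains the four arrows
\[
(\1, i) \to (P_m, j), \quad (\1, i) \to (P_n, j), \quad (B, i) \to (P_m, j), \quad (B, i) \to (P_n, j),
\]
two of which are unfoldings of $\alpha$ and two of $\beta$. Since $m \neq n$ we have $(m, n) \neq (3, 3)$, hence $\1 \neq B$ and $P_m \neq P_n$ as simple objects of $\cC(Q')$, and so these four vertices are pairwise distinct. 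As every arrow of $\check{Q'}$ runs from an $i$-vertex to a $j$-vertex, the only arrows among the four are parallels of the four displayed ones; therefore the full subquiver of $\check{Q'}$ on $\{(\1, i), (B, i), (P_m, j), (P_n, j)\}$ has underlying graph a $4$-cycle (an orientation of $\tilde{A}_3$), and we are done in this case too.

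The main obstacle is the bookkeeping in the case $m \neq n$: verifying the existence of the four displayed arrows and the distinctness of the four vertices directly from the fusion rules of $\cC_m \bigboxtimes \cC_n$, while keeping track of the parities of $m$ and $n$ and of the degenerate small cases $m = 3$ or $n = 3$, in which $P_m$ or $P_n$ collapses to $\1$ --- the quadrilateral still does not degenerate, precisely because $m \neq n$. Everything else is routine.
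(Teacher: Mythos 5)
Your proof is correct, and while it follows the paper's overall strategy -- reduce to the unfolded classical quiver via \cref{thm: rep Q and checkQ equiv} and exhibit a subquiver of $\check{Q'}$ that is not of finite type -- the combinatorial witness you produce in the case $m \neq n$ is genuinely different from the paper's. The paper splits into three cases: for $m = n$ it finds the same Kronecker subquiver you do; for $m \neq n$ with both labels at least $4$ it exhibits a $\tilde{D}_4$-shaped star centred at $(\nPi[m]_{m-3} \otimes \nPi[n]_{n-3}, i)$ with four distinct targets; and when one label equals $3$ it splits further by the parity of the other label, producing either a double arrow (via a fusion-rule computation locating a self-reproducing simple $\nPi[m]_\ell \overset{\oplus}{\subseteq} \nPi[m]_\ell \otimes \nPi[m]_{m-3}$) or a square. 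Your single $4$-cycle on $\{(\1,i), (P_m \otimes P_n, i), (P_m, j), (P_n, j)\}$ handles all of $m \neq n$ uniformly, using only self-duality of $P_m$ and $P_n$ rather than the finer fusion rules, and you correctly check that the quadrilateral does not degenerate when one label is $3$ (the two $i$-vertices and the two $j$-vertices stay distinct precisely because $P_m \neq P_n$ and $P_m \otimes P_n \neq \1$). Two small remarks: your passage from a bad subquiver to infinitely many indecomposables via extension by zero is the same observation the paper records just before the lemma (and, as there, fullness of the subquiver is not actually needed, since arrows outside the subquiver are simply assigned the zero map); and in a few degenerate instances (e.g.\ $m=5$, $n=3$) the full subquiver on your four vertices acquires an extra parallel arrow, so its underlying graph is a $4$-cycle with a doubled edge rather than literally a $4$-cycle -- harmless for your conclusion, and covered by your convention that a double edge counts as a cycle.
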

\begin{proof}
Using \cref{thm: rep Q and checkQ equiv} it is sufficient to show that its unfolded classical quiver $\check{Q}$ is not of finite type.
We prove this by showing that $\check{Q}$ contains a subquiver that is not of finite type.

Note that if $m=n$ then it follows by the construction of $\check{Q}$ that $\check{Q}$ will contain double arrows 
$\begin{tikzcd} (i, \1) \ar[shift left=1, from=1-1, to=1-2] \ar[shift right=1, from=1-1, to=1-2] & (j, \nPi_{n-3}) \end{tikzcd}$.
Thus for the remaining part of the proof we shall assume that $m \neq n$.

Let us start with the case where both $m, n \geq 4$.
In this case notice that
\[\begin{tikzcd}[column sep=small, row sep = small]
	{(\nPi[m]_{m-3}\otimes \nPi_2,j)} && {(\nPi[m]_{2}\otimes \nPi_{n-3},j)} \\
	& {(\nPi[m]_{m-3}\otimes \nPi_{n-3},i)} \\
	{(\nPi[m]_{m-3}\otimes \nPi_0,j)} && {(\nPi[m]_{0}\otimes \nPi_{n-3},j)}
	\arrow[from=2-2, to=1-1]
	\arrow[from=2-2, to=3-1]
	\arrow[from=2-2, to=1-3]
	\arrow[from=2-2, to=3-3]
\end{tikzcd}\]
is a subquiver of $\check{Q}$, hence $Q$ is not of finite type.

We are now left with the case where at least one of $m,n$ is 3; without loss of generality suppose $n = 3$ and $m \geq 4$.
We shall denote the arrow with label $m$ as $\alpha$ and the arrow with label $n=3$ as $\beta$.
Our analysis will depend the parity of $m$.
\begin{itemize}
\item (odd case) $m = 2k+1$ for $k \geq 2$.\\
	Denote $$\ell := \begin{cases}
		k,   &\text{ if $k$ is even}; \\
		k-1, &\text{ if $k$ is odd}.
		\end{cases}$$
	Using \eqref{eq: z/2 fusion rule} and \eqref{eq: 1 b fusion rule} we get $\nPi[m]_{\ell} \otimes \nPi[m]_{m-3} \cong \left( \nPi[m]_{\ell+1} \oplus \nPi[m]_{\ell-1} \right) \otimes \nPi[m]_{m-2}$.
	When $\ell = k$ we have that
	\[
	\nPi[m]_{\ell-1} \otimes \nPi[m]_{m-2} \cong \nPi[m]_{k} = \nPi[m]_{\ell}, \quad (\text{from } \eqref{eq: z/2 fusion rule})
	\]
	otherwise when $\ell = k-1$ we have that
	\[
	\nPi[m]_{\ell+1} \otimes \nPi[m]_{m-2} \cong \nPi[m]_{k-1} = \nPi[m]_{\ell}. \quad (\text{from } \eqref{eq: z/2 fusion rule})
	\]
	In both cases we have $\nPi[m]_{\ell} \overset{\oplus}{\subseteq} \nPi[m]_{\ell} \otimes \nPi[m]_{m-3}$ and so $\ufld(\alpha)$ contains an arrow $(\nPi[m]_{\ell},i) \ra (\nPi[m]_{\ell},j)$.
	On the other hand, since $v(\beta) = 3$, $\ufld(\beta)$ also contains an arrow $(\nPi[m]_{\ell},i) \ra (\nPi[m]_{\ell},j)$, which shows that 
	\[
	(\nPi[m]_{\ell},i) \rightrightarrows (\nPi[m]_{\ell},j)
	\]
	is a subquiver of $\check{Q}$.
	This shows that $\check{Q}$ is not of finite type.
\item (even case) $m = 2k$ for $k \geq 2$.\\
	As before, one shows that 
	\[
	\nPi[m]_{k-2} \overset{\oplus}{\subseteq} \nPi[m]_{k-1} \otimes \nPi[m]_{m-3}, \text{ and} \quad \nPi[m]_{k-1} \overset{\oplus}{\subseteq} \nPi[m]_{m-3} \otimes \nPi[m]_{k-2}.
	\]
	Together with the arrows from $\ufld(\beta)$, we get that the following is a subquiver of $\check{Q}$:
	\[\begin{tikzcd}[column sep = large]
	{(\nPi[m]_{k-1},i)} & {(\nPi[m]_{k-1},j)} \\
	{(\nPi[m]_{k-2},i)} & {(\nPi[m]_{k-2},j)}
	\arrow[from=1-1, to=1-2]
	\arrow[from=1-1, to=2-2]
	\arrow[from=2-1, to=1-2]
	\arrow[from=2-1, to=2-2]
	\end{tikzcd}\]
	which shows that $\check{Q}$ is not of finite type.
\end{itemize}
This covers all possible cases and hence completes the proof.
\end{proof}

\begin{figure}[ht]
\begin{center}
\begin{tabular}{||c | c ||} 
 \hline & \\ [-2ex]
 $\overline{Q}$ & $\overline{\check{Q}}$ \\ [1ex] 
 \hline\hline
 $B_n = C_n = \dynkin [Coxeter]B{}$ & $A_{2n-1} \coprod D_{n+1} = 
 \adjustbox{scale=0.8}{
	\begin{tikzcd}[every arrow/.append style = {shorten <= -.5em, shorten >= -.5em}]
	\bullet & \bullet & \bullet & \cdots & \bullet & \bullet & \bullet \\
	\bullet & \bullet & \bullet & \cdots & \bullet & \bullet & \bullet \\
	\bullet & \bullet & \bullet & \cdots & \bullet & \bullet & \bullet
	\arrow[no head, from=1-5, to=1-6]
	\arrow[no head, from=2-5, to=2-6]
	\arrow[no head, from=3-5, to=3-6]
	\arrow[no head, from=3-6, to=2-7]
	\arrow[no head, from=2-7, to=1-6]
	\arrow[no head, from=2-6, to=1-7]
	\arrow[no head, from=2-6, to=3-7]
	\arrow[no head, from=1-5, to=1-4, shorten <= -.5em, shorten >= .7em]
	\arrow[no head, from=2-5, to=2-4, shorten <= -.5em, shorten >= .7em]
	\arrow[no head, from=3-5, to=3-4, shorten <= -.5em, shorten >= .7em]
	\arrow[no head, from=1-2, to=1-3]
	\arrow[no head, from=1-4, to=1-3, shorten <= .7em, shorten >= -.5em]
	\arrow[no head, from=3-2, to=3-3]
	\arrow[no head, from=2-2, to=2-3]
	\arrow[no head, from=2-4, to=2-3, shorten <= .7em, shorten >= -.5em]
	\arrow[no head, from=3-4, to=3-3, shorten <= .7em, shorten >= -.5em]
	\arrow[no head, from=1-2, to=1-1]
	\arrow[no head, from=2-2, to=2-1]
	\arrow[no head, from=3-2, to=3-1]
	\end{tikzcd}
 }$ \\ 
 \hline
 $F_4 = \dynkin [Coxeter]F4{}$ & $E_6 \coprod E_6 =
 \adjustbox{scale=0.8}{
\begin{tikzcd}[every arrow/.append style = {shorten <= -.5em, shorten >= -.5em}]
	\bullet & \bullet & \bullet & \bullet \\
	\bullet & \bullet & \bullet & \bullet \\
	\bullet & \bullet & \bullet & \bullet
	\arrow[no head, from=1-1, to=1-2]
	\arrow[no head, from=2-1, to=2-2]
	\arrow[no head, from=3-1, to=3-2]
	\arrow[no head, from=3-2, to=2-3]
	\arrow[no head, from=2-3, to=1-2]
	\arrow[no head, from=2-2, to=1-3]
	\arrow[no head, from=1-3, to=1-4]
	\arrow[no head, from=2-2, to=3-3]
	\arrow[no head, from=3-3, to=3-4]
	\arrow[no head, from=2-3, to=2-4]
\end{tikzcd}
 }$ \\
 \hline
 $G_2 = \dynkin [Coxeter,gonality=6]G2$  & $A_5 \coprod A_5 =
 \adjustbox{scale=0.8}{
\begin{tikzcd}[every arrow/.append style = {shorten <= -.5em, shorten >= -.5em}]
	\bullet & \bullet & \bullet & \bullet & \bullet \\
	\bullet & \bullet & \bullet & \bullet & \bullet
	\arrow[no head, from=1-1, to=2-2]
	\arrow[no head, from=2-1, to=1-2]
	\arrow[no head, from=1-2, to=2-3]
	\arrow[no head, from=2-2, to=1-3]
	\arrow[no head, from=1-3, to=2-4]
	\arrow[no head, from=2-3, to=1-4]
	\arrow[no head, from=1-4, to=2-5]
	\arrow[no head, from=2-4, to=1-5]
\end{tikzcd}
 }$ \\
 \hline
 $H_3 = \dynkin [Coxeter]H3$ & $D_6 =
 \adjustbox{scale=0.8}{
\begin{tikzcd}[row sep = tiny, every arrow/.append style = {shorten <= -.5em, shorten >= -.5em}]
	& \bullet & \bullet \\
	\bullet \\
	& \bullet & \bullet \\
	\bullet
	\arrow[no head, from=4-1, to=3-2]
	\arrow[no head, from=3-2, to=2-1]
	\arrow[no head, from=2-1, to=1-2]
	\arrow[no head, from=1-2, to=1-3]
	\arrow[no head, from=3-2, to=3-3]
\end{tikzcd} 
 }$ \\
 \hline
 $H_4 = \dynkin [Coxeter]H4$ & $E_8 =
 \adjustbox{scale=0.8}{
\begin{tikzcd}[row sep = tiny, every arrow/.append style = {shorten <= -.5em, shorten >= -.5em}]
	& \bullet & \bullet & \bullet \\
	\bullet \\
	& \bullet & \bullet & \bullet \\
	\bullet
	\arrow[no head, from=4-1, to=3-2]
	\arrow[no head, from=3-2, to=2-1]
	\arrow[no head, from=2-1, to=1-2]
	\arrow[no head, from=1-2, to=1-3]
	\arrow[no head, from=3-2, to=3-3]
	\arrow[no head, from=1-3, to=1-4]
	\arrow[no head, from=3-3, to=3-4]
\end{tikzcd} 
 }$ \\ 
 \hline
 $I_2(2m) = \begin{tikzcd}[every arrow/.append style = {shorten <= -.5em, shorten >= -.5em}]
	\bullet & \bullet
	\arrow["2m", no head, from=1-1, to=1-2]
	\end{tikzcd}$ & $A_{2m-1} \coprod A_{2m-1} =
 \adjustbox{scale=0.8}{
\begin{tikzcd}[row sep=tiny, every arrow/.append style = {shorten <= -.5em, shorten >= -.5em}]
	\bullet & \bullet & \bullet & {{}} & \bullet \\
	&&& \cdots \\
	\bullet & \bullet & \bullet & {{}} & \bullet
	\arrow[no head, from=1-1, to=3-2]
	\arrow[no head, from=3-1, to=1-2]
	\arrow[no head, from=1-2, to=3-3]
	\arrow[no head, from=3-2, to=1-3]
	\arrow[no head, from=3-3, to=1-4, shorten >= .6em]
	\arrow[no head, from=1-3, to=3-4, shorten >= .6em]
	\arrow[no head, from=3-4, to=1-5, shorten <= .6em]
	\arrow[no head, from=1-4, to=3-5, shorten <= .6em]
\end{tikzcd}
 }$ \\ 
 \hline
 $I_2(2m+1) = \begin{tikzcd}[every arrow/.append style = {shorten <= -.5em, shorten >= -.5em}]
	\bullet & \bullet
	\arrow["2m+1", no head, from=1-1, to=1-2]
	\end{tikzcd}$ & $A_{2m} =
 \adjustbox{scale=0.8}{
\begin{tikzcd}[row sep=tiny, every arrow/.append style = {shorten <= -.5em, shorten >= -.5em}]
	\bullet &  & \bullet & {{}} & \\
	&&& \cdots \\
	 & \bullet &  & {{}} & \bullet
	\arrow[no head, from=1-1, to=3-2]
	\arrow[no head, from=3-2, to=1-3]
	\arrow[no head, from=1-3, to=3-4, shorten >= .6em]
	\arrow[no head, from=1-4, to=3-5, shorten <= .6em]
\end{tikzcd}
 }$ \\ [1ex] 
\hline
\end{tabular}
\end{center}
\caption{Unfolding of finite type non-simply-laced Coxeter graphs to finite type simply-laced Coxeter graphs.}
\label{fig: unfolding finite type coxeter}
\end{figure}

The underlying labelled graph of a Coxeter quiver $Q$ is defined to be the labelled graph obtained from $Q$ by forgetting the orientation.
A Coxeter quiver is said to be \emph{connected} if its underlying graph is connected.
We are now ready to prove Gabriel's classification theorem for Coxeter quivers.
\begin{theorem}\label{thm: gabriel classification}
Let $Q$ be a connected Coxeter quiver.
Then $Q$ is of finite type if and only if the underlying labelled graph of $Q$ is a Coxeter--Dynkin diagram: \\
\(A_n = \dynkin [Coxeter]A{} \), \
\(B_n = C_n = \dynkin [Coxeter]B{}\), \
\(D_n = \dynkin [Coxeter]D{} \), \
\(E_6 = \dynkin [Coxeter]E6\), \newline
\(E_7 = \dynkin [Coxeter]E7\), \
\(E_8 = \dynkin [Coxeter]E8 \), \
\(F_4 = \dynkin [Coxeter]F4\), \
\(G_2 = \dynkin [Coxeter,gonality=6]G2 \), \
\(H_3 = \dynkin [Coxeter]H3\), \vspace{15pt} \\ 
\(H_4 = \dynkin [Coxeter]H4\), \
\(I_2(n) = \dynkin [Coxeter,gonality=n]I{}\).
\end{theorem}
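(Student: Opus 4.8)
The plan is to reduce the whole statement to the classical Gabriel theorem through the abelian equivalence $\rep(Q)\simeq\rep(\check Q)$ of \cref{thm: rep Q and checkQ equiv}. By that equivalence $Q$ is of finite type if and only if $\check Q$ is, and by classical Gabriel this happens exactly when every connected component of $\overline{\check Q}$ is a Dynkin diagram of type $A$, $D$ or $E$. So the theorem becomes the purely combinatorial assertion that $\overline Q$ is a Coxeter--Dynkin diagram if and only if $\overline{\check Q}$ is a disjoint union of $ADE$ Dynkin diagrams, and I would prove the two implications separately.

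For the ``if'' direction I would argue by inspection. If $\overline Q$ is simply laced ($A_n$, $D_n$, $E_6$, $E_7$, $E_8$), then every arrow has label $3$, so $\cC(Q)\simeq\vec_{\bbC}$, $\Irr(\cC(Q))$ is a single element and $\check Q=Q$ is already $ADE$. Otherwise $\overline Q$ is one of $B_n=C_n$, $F_4$, $G_2$, $H_3$, $H_4$, $I_2(n)$, and I would compute the unfolding $\check Q$ directly from the recipe of \cref{subsec: unfolding} via the fusion rules \eqref{eq: fusion rule}, \eqref{eq: 1 b fusion rule}, \eqref{eq: z/2 fusion rule}; in each case $\overline{\check Q}$ is the disjoint union of $ADE$ diagrams listed in \cref{fig: unfolding finite type coxeter}. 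This is a finite mechanical check; the only bookkeeping point is that along a label-$3$ arrow the unfolded vertices just replicate over $\Irr(\cC(Q))$, so it suffices to understand the unfolding of the unique edge of label $\ge 4$ and then tensor with the remaining simples. Classical Gabriel then gives that $Q$ is of finite type.

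For the ``only if'' direction I would argue contrapositively: assume $\overline Q$ is connected but not a Coxeter--Dynkin diagram, and exhibit inside it a Coxeter subquiver that is not of finite type. This suffices because $\cC(Q')$ is a fusion subcategory of $\cC(Q)$, so $\rep(Q')$ is an abelian subcategory of $\rep(Q)$ and finite type is inherited, as recorded just before \cref{lem: multilaced quiver not finite type}. If $\overline Q$ is not a simple graph it contains the two-vertex multi-edge quiver of \cref{lem: multilaced quiver not finite type}, which is not of finite type. If $\overline Q$ is a simple graph then it is a Coxeter graph in the usual sense; being connected and not on the finite-type list, the classification of finite Coxeter groups supplies a connected Coxeter subgraph $\Gamma$ that is itself not of finite Coxeter type and is minimal such. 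If all labels of $\Gamma$ equal $3$ (types $\widetilde A_n$, i.e.\ a cycle on at least three vertices, $\widetilde D_n$, $\widetilde E_6$, $\widetilde E_7$, $\widetilde E_8$), then $\cC(\Gamma)\simeq\vec_{\bbC}$ and $\check\Gamma=\Gamma$ is an affine Dynkin diagram, hence not of finite type by classical Gabriel. If $\Gamma$ carries a label $\ge 4$ --- the affine diagrams $\widetilde B_n$, $\widetilde C_n$, $\widetilde F_4$, $\widetilde G_2$, together with the short supplementary list of small labelled diagrams forced by the Coxeter classification (three-vertex paths with an edge of label $\ge 6$, paths with two edges of label $\ge 4$, cycles carrying a label $\ge 4$) --- I would unfold $\Gamma$ with the fusion rules and locate inside $\overline{\check\Gamma}$ either a double edge or an affine Dynkin subdiagram, exactly as in the proof of \cref{lem: multilaced quiver not finite type}; that subquiver is not of finite type, hence neither is $\Gamma$, hence neither is $Q$.

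The hard part is this last case. Everything else is a citation or a bounded mechanical computation, but ruling out the labelled affine and small hyperbolic graphs requires, for each of them, tracking how a well-chosen simple object of $\cC(\Gamma)$ propagates through \eqref{eq: fusion rule}, \eqref{eq: 1 b fusion rule} and \eqref{eq: z/2 fusion rule} along the diagram and checking that the resulting unfolded subquiver is genuinely non-$ADE$ --- the same parity-sensitive fusion-rule analysis already carried out for the two-vertex multi-edge quiver in \cref{lem: multilaced quiver not finite type}, now run over somewhat longer graphs.
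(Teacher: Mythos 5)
Your ``if'' direction and the overall reduction coincide with the paper's: \cref{thm: rep Q and checkQ equiv} plus classical Gabriel reduces everything to comparing $\overline{Q}$ with $\overline{\check{Q}}$, and the finite-type unfoldings are exactly the table in \cref{fig: unfolding finite type coxeter}. The divergence is in the ``only if'' direction, and here your route has a genuine gap. The paper does not run a contrapositive case analysis at all: if $Q$ is of finite type then so is $\check{Q}$, so by classical Gabriel $\overline{\check{Q}}$ is a union of $ADE$ diagrams and $W\bigl(\overline{\check{Q}}\bigr)$ is finite; \cref{lem: multilaced quiver not finite type} rules out multiple edges, so $\overline{Q}$ is an honest Coxeter graph and the vertex map $(A,i)\mapsto i$ is an admissible folding of Coxeter graphs in the sense of Lusztig and Crisp, which induces an \emph{injective} homomorphism $W(\overline{Q})\hookrightarrow W\bigl(\overline{\check{Q}}\bigr)$. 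Hence $W(\overline{Q})$ is finite and the classification of finite Coxeter groups finishes. This one citation replaces the entire analysis you propose, and in fact shows a posteriori that every minimal infinite $\Gamma$ must unfold to something non-$ADE$, which is the statement you were planning to verify by hand.

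As for your version: the contrapositive strategy is workable in principle, but it is incomplete as written in two respects. First, the enumeration of minimal non-finite connected labelled Coxeter graphs is wrong. Beyond the labelled affine diagrams and your three parenthetical families there are, for instance, the rank $\geq 4$ minimal infinite diagrams carrying a single label $5$ --- the five-vertex path with labels $5,3,3,3$ (all of whose proper connected subgraphs are of type $A_4$ or $H_4$) and the branched four-vertex star with legs labelled $3,3,5$ --- i.e.\ the compact and non-compact hyperbolic (Lann\'er-type) diagrams; moreover ``three-vertex paths with a label $\geq 6$'' and ``paths with two labels $\geq 4$'' are infinite families, so the claim of a bounded mechanical check needs a uniform argument that you do not supply. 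Second, the part you yourself flag as the hard part --- exhibiting a non-$ADE$ component in the unfolding of each such $\Gamma$ --- is precisely the content of the direction and is entirely deferred. I would replace this whole branch with the folding-homomorphism argument; your treatment of the multi-edge case via \cref{lem: multilaced quiver not finite type} and of the simply-laced affine subgraphs is fine and survives unchanged.
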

\begin{proof}
Let $\check{Q}$ be the unfolded quiver of $Q$.
We denote the underlying labelled graph of $Q$ by $\overline{Q}$ and the underlying (unlabelled) graph of $\check{Q}$ by $\overline{\check{Q}}$.

If $\overline{Q}$ is a Coxeter--Dynkin diagram, then one can easily check that the connected components of $\overline{\check{Q}}$ are $ADE$ Dynkin diagrams; refer to \cref{fig: unfolding finite type coxeter} for the complete list.
By \cref{thm: rep Q and checkQ equiv} and the classical Gabriel's theorem, it follows that $Q$ is of finite type.

Suppose instead that $Q$ is of finite type. 
By \cref{thm: rep Q and checkQ equiv} its unfolded quiver $\check{Q}$ has only finitely many indecomposable representations, hence the classical Gabriel's theorem says that $\check{Q}$ is a disjoint union of $ADE$ Dynkin diagrams.
In particular, the classification of finite Coxeter groups says that the associated Coxeter group $W\left(\overline{\check{Q}}\right)$ is a finite group.
Since $Q$ is of finite type by assumption, \cref{lem: multilaced quiver not finite type} implies that $Q$ can not have more than one labelled arrow between any pair of vertices.
In particular, its underlying labelled graph $\overline{Q}$ is actually a Coxeter graph.
By construction, the map $f: \overline{\check{Q}}_0 \ra \overline{Q}_0$ sending $(A,i) \xmapsto{f} i$ defines an (admissible) folding map of Coxeter graphs (in the sense of \cite[Definition 4.1]{Crisp99} or \cite[Section 3.1]{Lusz83}).
As such, it induces an injective Coxeter group homomorphism $W(\overline{Q}) \ra W\left(\overline{\check{Q}}\right)$; see \cite[Proposition 2.3 and 4.3]{Crisp99} or \cite[Corollary 3.3]{Lusz83}.
Since $W\left(\overline{\check{Q}}\right)$ is finite, it follows that $W(\overline{Q})$ must be finite. 
By the classification of finite Coxeter groups we get that $\overline{Q}$ must be a Coxeter--Dynkin diagram.
\end{proof}

\section{Reflection functors and positive roots} \label{sec: reflection and roots}
In this section, we use reflection functors similar to those defined in \cite{BGP_73, DR_76} to relate indecomposable representations to positive roots of the underlying Coxeter graph.
The root system at play will be the ones studied in \cite{Dyer09}, which are defined over (commutative) fusion rings instead of $\Z$.
We start by briefly recalling this generalised notion of root systems.

\subsection{Root system over fusion rings and dimension vectors} \label{subsec: root system in fusion ring}
Note that if $Q$ and $Q'$ are Coxeter quivers with the same underlying labelled graph $\Gamma:= \overline{Q} = \overline{Q'}$, then we must have that $\cC(Q) = \cC(Q')$.
To emphasise this fact, we shall also use the notation 
\[
\cC(\Gamma) := \cC(Q) = \cC(Q').
\]
Let $\Gamma$ be the underlying labelled graph of a Coxeter quiver, with vertex set $\Gamma_0$, edge set $\Gamma_1$ and label map $v: \Gamma_1 \ra \{n \in \mathbb{N} \mid n \geq 3\}$.
Define $L_{\Gamma}$ to be the free $K_0(\cC(\Gamma))$-module
\[
L_\Gamma := \bigoplus_{i \in \Gamma_0} K_0(\cC(\Gamma)).
\]
Elements in $L_\Gamma$ shall be represented as $(v_i)_{i \in \Gamma_0}$, where we think of them as a vector with entries in $K_0(\cC(\Gamma))$.
The standard basis elements with $[\1_{\cC(Q)}] \in K_0(\cC(\Gamma))$ at position $i$ and 0 elsewhere will be denoted by $e_i$.
The following is a slight generalisation of root systems over commutative rings for labelled graphs.
\begin{definition} \label{defn: root system}
Let $\Gamma:= \overline{Q}$ be the underlying labelled graph of a Coxeter quiver $Q$.
\begin{enumerate}[(i)]
\item An element $x$ of the (commutative) fusion ring $K_0(\cC(\Gamma))$ is called \emph{positive} if  $x = [X]$ for some non-zero object $X \in \cC(Q)$. 
The set of positive elements is denoted by $K_0(\cC(\Gamma))_{>0}$.
Similarly we say that $v:= (v_i)_{i \in \Gamma_0} \in L_\Gamma$ is \emph{positive} if each $v_i$ is positive or zero, but not all zero (namely $v \neq 0$). 
Both notions of positivity are clearly closed under addition. \label{defn: positive}
\item Define $B: L_\Gamma \times L_\Gamma \ra K_0(\cC(\Gamma))$ to be the (symmetric) $K_0(\cC(\Gamma))$-bilinear form uniquely determined by
\[
B(e_i,e_j) := \begin{cases}
	2[\1_{\cC(\Gamma)}], &\text{ when } i = j; \\
	- \sum_{\substack{\alpha \in \Gamma_1 \\ \alpha \text{ connects $i$ and $j$} } } \left[\nPi[v(\alpha)]_{v(\alpha)-3}\right], &\text{ when } i\neq j.
	\end{cases}
\]
\item For each $i \in \Gamma_0$, we define the $K_0(\cC(\Gamma))$-linear map $\sigma_i: L_\Gamma \ra L_\Gamma$ by
\[
\sigma_i(v) := v - B(e_i,v)e_i.
\]
It is easy to see that $\sigma_i$ is an involution.\label{defn: reflection group}
The group of $K_0(\cC(\Gamma))$-linear maps generated by $\sigma_i$ for $i \in \Gamma_0$ is denoted by $W_\Gamma$. 
\item The elements $e_i \in L_\Gamma$ are called the \emph{simple roots} and elements $v \in L_\Gamma$ living in the $W_\Gamma$-orbit of the simple roots are called \emph{roots}.
An \emph{extended root} is an element in $L_\Gamma$ given by $[A]\cdot v$ for $v$ a root and $A$ a simple object in $\cC(Q)$.
A root (similarly extended root) is \emph{positive} if it is positive as an element in $L_\Gamma$.
\item The set of extended roots is denoted by $\Phi^{ext}$ and the set of positive extended roots is denoted by $\Phi_+^{ext}$.
On the other hand, the set of roots is denoted by $\Phi \subseteq \Phi^{ext}$ and the set of positive roots is denote by $\Phi_+ \subseteq \Phi_+^{ext}$.
It is easy to check from the definition that
\[
\Phi^{ext} = \coprod_{A \in \Irr(\cC(\Gamma))} [A]\cdot \Phi, \quad \Phi_+^{ext} = \coprod_{A \in \Irr(\cC(\Gamma))} [A]\cdot \Phi_+. 
\]
\end{enumerate}
\end{definition}

When $\Gamma$ is a Coxeter graph, i.e. no two vertices in $\Gamma$ are connected by two distinct edges, the datum
\[
\left(K_0(\cC(\Gamma)), K_0(\cC(\Gamma))_{>0}, L_\Gamma, L_\Gamma, B(-,-), \{e_i\}_{i \in \Gamma_0}, \{e_i\}_{i \in \Gamma_0}, \id \right)
\]
is a root basis datum with positivity structure in the sense of \cite[Section 2.3]{Dyer09} (see also Section 3.4 and 3.5 in loc. cit.).
In particular, we have the following result:
\begin{proposition}[\protect{\cite[Proposition 2.4]{Dyer09}}] \label{prop:rootproperties}
Suppose $\Gamma = \overline{Q}$ is a Coxeter graph.
Then we have the following:
\begin{enumerate}[(i)]
\item $W_\Gamma$ is isomorphic to the Coxeter group associated to $\Gamma$.
\item Every root and extended root is either positive or negative; namely
\[
\Phi = \Phi_+ \coprod -\Phi_+, \quad \Phi^{ext} = \Phi^{ext}_+ \coprod -\Phi^{ext}_+ .
\]
\end{enumerate}
\end{proposition}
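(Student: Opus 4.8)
The plan is twofold: first reduce to \cite[Proposition 2.4]{Dyer09} by checking that the octuple
\[
\left(K_0(\cC(\Gamma)),\, K_0(\cC(\Gamma))_{>0},\, L_\Gamma,\, L_\Gamma,\, B,\, \{e_i\}_{i\in\Gamma_0},\, \{e_i\}_{i\in\Gamma_0},\, \id\right)
\]
satisfies the axioms of a root basis datum with positivity structure from \cite[\S2.3]{Dyer09}, and then sketch the underlying argument for the reader. The axiom check is the only place where something must genuinely be verified, but it is short: $K_0(\cC(\Gamma))$ is a commutative $\mathbb{N}$-ring with $\bbZ$-basis the classes of simples, so its positive cone $K_0(\cC(\Gamma))_{>0}$ (classes of nonzero objects) is closed under addition and multiplication, contains $1$, and contains neither $0$ nor $-[X]$ for $X\neq 0$; this makes $x\le y\Leftrightarrow y-x\in K_0(\cC(\Gamma))_{>0}\cup\{0\}$ a partial order compatible with the module structure. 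On the form side, $B(e_i,e_i)=2$, each off-diagonal entry is $-[\nPi[m]_{m-3}]\le 0$ where $m=m_{ij}$ is the label (there is at most one edge since $\Gamma$ is a Coxeter graph), and the only quantitative input needed is $[\nPi[m]_{m-3}]^2-1\in K_0(\cC(\Gamma))_{>0}$, which is exactly the fusion rule $\nPi[m]_0\overset{\oplus}{\subseteq}\nPi[m]_{m-3}\otimes\nPi[m]_{m-3}$ (morally $2\cos(\pi/m)<2$); equivalently, the rank-two restriction is the root basis datum of the dihedral group $I_2(m)$ over $K_0(\cC_m)$. With this in place \cite[Proposition 2.4]{Dyer09} applies verbatim.

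For (i), I would first check the braid relation $(\sigma_i\sigma_j)^{m_{ij}}=\id$. This can be checked on the rank-two free submodule $M_{ij}:=K_0(\cC(\Gamma))e_i\oplus K_0(\cC(\Gamma))e_j$, since $M_{ij}$ is stable under $\sigma_i$ and $\sigma_j$ while both act trivially on $L_\Gamma/M_{ij}$ (as $B(e_i,e_k)e_i\in M_{ij}$). On $M_{ij}$ the relation becomes an identity of $2\times 2$ matrices over $K_0(\cC_{m_{ij}})$, which one proves by applying the family of ring homomorphisms to $\bbC$ indexed by the (distinct) roots of the Chebyshev polynomial $\Delta_{m_{ij}-1}$ cutting out $K_0(\cC_{m_{ij}})$ and invoking the classical dihedral identity in each factor. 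This yields a surjection $\phi\colon W(\Gamma)\twoheadrightarrow W_\Gamma$ with $s_i\mapsto\sigma_i$; it is also faithful, because extending scalars along the Perron--Frobenius dimension homomorphism $K_0(\cC(\Gamma))\to\bbR$ (which sends $[\nPi[m]_{m-3}]\mapsto 2\cos(\pi/m)$) turns $(L_\Gamma,B)$ into the classical geometric representation of $W(\Gamma)$, and that representation is faithful by a theorem of Tits.

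For (ii), observe that every extended root has the form $[A]\cdot w(e_i)$ with $A$ simple and $w\in W_\Gamma$, and that positivity and negativity are preserved under multiplication by positive elements and are mutually exclusive (a nonzero element of an $\mathbb{N}$-ring cannot be both, and an extended root is nonzero by the Perron--Frobenius specialisation); so it suffices to show each $w(e_i)$ is positive or negative. I would prove this by induction on the length $\ell(w)$ of $w$ in the generators $\sigma_i$. The base case is trivial. For the step, pick $j$ with $\ell(ws_j)<\ell(w)$: if $j=i$ then $w(e_i)=-(ws_i)(e_i)$ and the strictly shorter element settles it; if $j\neq i$, write $w=u\,d$ with $d$ in the dihedral parabolic $\langle s_i,s_j\rangle$ and $u$ the minimal-length representative of $w\langle s_i,s_j\rangle$, so $\ell(ux)=\ell(u)+\ell(x)$ for all $x$ in that parabolic. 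The crucial point, and the main obstacle, is the rank-two computation: for $x\in\langle s_i,s_j\rangle$ with $\ell(xs_i)>\ell(x)$, the vector $x(e_i)\in M_{ij}$ has both coordinates positive; concretely these coordinates are the classes $[\nPi[m]_{k}]$ for suitable $k$ (via the identification $[\nPi[m]_k]\leftrightarrow\Delta_k(d)$), hence manifestly positive. Substituting $w(e_i)=u\big(x(e_i)\big)=(\text{positive})\cdot u(e_i)+(\text{positive})\cdot u(e_j)$ and applying the induction hypothesis to the strictly shorter $u$ (which satisfies $\ell(us_i),\ell(us_j)>\ell(u)$) shows $w(e_i)$ is positive. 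The delicate steps are precisely this rank-two positivity statement together with the length bookkeeping in the dihedral parabolic; everything else — the axiom verification, the specialisation arguments, and the reduction from extended roots to roots — is formal.
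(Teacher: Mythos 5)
Your proposal is correct and takes essentially the same route as the paper: the paper's entire argument consists of verifying that the octuple you list is a root basis datum with positivity structure in Dyer's sense and then citing \cite[Proposition 2.4]{Dyer09} verbatim, which is exactly your first paragraph. The remainder of your write-up (braid relations via the rank-two specialisations at roots of the Chebyshev polynomial, faithfulness via the Perron--Frobenius specialisation to the Tits representation, and the length-induction for positivity) is a correct unpacking of what Dyer's proof does, but the paper itself does not reproduce any of it.
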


\begin{definition}\label{defn: dim vector}
Let $V$ be a representation of a Coxeter quiver $Q$.
We define the \emph{dimension vector} $\dim(V)$ of $V$ as
\[
\dim(V) := ([V_i])_{i \in Q_0} \in L_{\overline{Q}}.
\]
\end{definition}
Note that if $Q$ is a classical quiver ($\img(v) = 3$), $K_0(\cC(\overline{Q})) \cong \Z$ and $\dim(V)$ for $V \in \rep(Q)$ corresponds exactly to the classical dimension vector of $V$.

\begin{remark}
Let $S(i)$ be the simple representation of $Q$ such that $S(i)_i = \1_{\cC(Q)}$ and is zero everywhere else.
It is easy to see that the Grothendieck classes $\{ [S(i)] \mid i \in Q_0 \} \subset K_0(\rep(Q))$ form a $K_0(\cC(\overline{Q}))$-basis for $K_0(\rep(Q))$, so that $K_0(\rep(Q)) \cong L_{\overline{Q}}$.
\end{remark}

\subsection{Reflection functors and Gabriel's theorem on indecomposables and positive roots}
Throughout, $Q$ will be a Coxeter quiver and $\check{Q}$ will be its associated unfolded classical quiver.
We shall reserve $S(i)$ to denote the simple representation of $Q$ such that $S(i)_i = \1_{\cC(Q)}$ and is zero everywhere else.

Before we begin, let us set up some notation.
Recall that every object in $\cC(Q)$ is self-dual, so we have the following natural isomorphism between morphism spaces in $\cC(Q)$:
\[
\Hom_{\cC(Q)}(A \otimes B_1, B_2) \cong \Hom_{\cC(Q)}(B_1, A \otimes B_2).
\]
More precisely, the isomorphism can be described as follows.
For each $f: A\otimes B_1 \ra B_2$, we denote the image of $f$ in $\Hom_{\cC(Q)}(B_1, A \otimes B_2)$ induced by the duality of $A$ by $\hat{f}$, which diagrammatically is given by:
\begin{equation} \label{eqn: induced dual map}
\begin{tikzpicture}
\node (1A) {};
\node[right=1.5ex of 1A] (1B2) {$B_2$};
\node[below=11ex of 1A] (2A) {$A$};
\node[right=4.5ex of 2A] (2B1) {$B_1$};
\node[below=3ex of 1B2,draw] (f) {$f$};

\draw (1B2) -- (f);
\draw (2A) -- (f);
\draw (2B1) -- (f);
\node[right=6ex of f] (squig) {$\mapsto$};
\node[right=3ex of squig] (fhat) {$\hat{f} :=$};
\node[right=1ex of fhat] (2A'1) {$A$};
\node[right=2.5ex of 2A'1] (2A'2) {$A$};
\node[right=4.5ex of 2A'2] (2B'1) {$B_1$};
\node[above=8ex of 2A'1] (1A') {$A$};
\node[right=6ex of 1A'] (1B'2) {$B_2$};
\node[below=6ex of 2B'1] (3B'1) {$B_1,$};     
\node[below=2ex of 1B'2,draw] (f') {$f$};

\draw (2A'1.south) arc[start angle=-180, end angle=0, radius=3ex] (2A'2);
\draw (1A') -- (2A'1);
\draw (2B'1) -- (3B'1);
\draw (1B'2) -- (f');
\draw (2A'2) -- (f');
\draw (2B'1) -- (f');
\end{tikzpicture}
\end{equation}
where the cup $\cup: \1_{\cC(Q)} \ra A \otimes A$ denotes the unit morphism coming from the self-duality of $A$  ($\1_{\cC(Q)}$ omitted from the diagram since $\cC(Q)$ is strictly monoidal) and the straight lines denote the corresponding identity morphisms.

Given any vertex $i \in Q_0$, we define $\sigma_i Q$ to be the Coxeter quiver that reverses all the arrows $\alpha$ that start or end at $i$, whilst preserving the labels $v(\alpha)$.
Following the classical notion we say that a vertex is a \emph{sink} (resp.\ \emph{source}) if it only has incoming (resp.\ outgoing) arrows.

The following fact (similar to the classical case) can be easily checked using the semi-simple property of $\cC(Q)$.
\begin{lemma} \label{lem: simple epi mono condition}
Let $V$ be an indecomposable representation of $Q$.
\begin{enumerate}[(i)]
\item Let $i \in Q_0$ be a sink.
Then either $V \cong S(i) \otimes A$ for some $A \in \Irr(\cC(Q))$, or the following morphism $\xi$ in $\cC(Q)$
\[
\xi:= \left[ V_{\alpha} \right]^T_{\substack{ \scriptscriptstyle
	\alpha \in Q_1 \\ \scriptscriptstyle
	t(\alpha) = i}
	}
		: \bigoplus_{\substack{
			\alpha \in Q_1 \\
			t(\alpha) = i}
			}	 
				\nPi[v(\alpha)]_{v(\alpha)-3} \otimes V_{s(\alpha)} \ra V_i
\]
is an epimorphism.
\item Let $i \in Q_0$ be a source.
Then either $V \cong S(i) \otimes A$ for some $A \in \Irr(\cC(Q))$, or the following morphism $\vartheta$ in $\cC(Q)$
\[
\vartheta:= \left[ \hat{V}_{\alpha} \right]_{\substack{ \scriptscriptstyle
	\alpha \in Q_1 \\ \scriptscriptstyle
	s(\alpha) = i}
	}
		: V_i \ra \bigoplus_{\substack{
		\alpha \in Q_1 \\
		s(\alpha) = i}
		} 
			\nPi[v(\alpha)]_{v(\alpha)-3} \otimes V_{t(\alpha)}
\]
is a monomorphism (refer to \eqref{eqn: induced dual map} for the notation $\hat{V}_\alpha$).
\end{enumerate}
\end{lemma}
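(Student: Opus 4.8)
The plan is to adapt the classical Bernstein--Gelfand--Ponomarev argument (as in \cite{BGP_73, DR_76}), using the semisimplicity of $\cC(Q)$ in place of the splitting of short exact sequences of vector spaces, and the self-duality of every object of $\cC(Q)$ to run (ii) by the same mechanism as (i). In both parts the strategy is: if the relevant map fails to be epi (resp.\ mono), split off the obstruction as a direct summand of $V$ supported at $i$, and then invoke indecomposability.

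For (i), suppose $i$ is a sink and that $\xi$ is \emph{not} an epimorphism, so $\img(\xi)$ is a proper subobject of $V_i$; by semisimplicity of $\cC(Q)$ it has a nonzero complement $C$, giving $V_i \cong \img(\xi) \oplus C$ in $\cC(Q)$. I would then check two things. First, setting $V'_i := \img(\xi)$ and $V'_j := V_j$ for $j \neq i$ defines a subrepresentation $V' \subseteq V$ with the inherited structure maps: this is legitimate because each $V_\alpha$ with $t(\alpha) = i$ has image inside $\img(\xi)$ (it is the composite of $\xi$ with the inclusion of its $\alpha$-summand), and there are no arrows leaving the sink $i$. Second, the object $C$ placed at vertex $i$ with zero elsewhere is a complementary subrepresentation $V''$, again because $i$ is a sink. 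Hence $V \cong V' \oplus V''$, and indecomposability together with $C \neq 0$ forces $V' = 0$; then $V$ is concentrated at $i$, so $V$ is the direct sum of the simple summands of $V_i$, and indecomposability forces $V_i$ to be simple, i.e.\ $V \cong S(i) \otimes V_i$ --- the first alternative.

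Part (ii) is the mirror image. Suppose $i$ is a source and $\vartheta$ is not a monomorphism, so $K := \ker(\vartheta)$ is nonzero; then $K$ is contained in $\ker(\hat{V}_\alpha)$ for every $\alpha$ with $s(\alpha) = i$. Using naturality in the first variable of the self-duality isomorphism $\Hom_{\cC(Q)}(\nPi[v(\alpha)]_{v(\alpha)-3} \otimes -, V_{t(\alpha)}) \cong \Hom_{\cC(Q)}(-, \nPi[v(\alpha)]_{v(\alpha)-3} \otimes V_{t(\alpha)})$ from \eqref{eqn: induced dual map}, this is equivalent to $V_\alpha$ vanishing on $\nPi[v(\alpha)]_{v(\alpha)-3} \otimes K$ for all such $\alpha$. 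Writing $V_i \cong K \oplus W$ in $\cC(Q)$, the object $K$ at vertex $i$ therefore defines a subrepresentation $V'' \subseteq V$ (no arrows enter the source $i$, and the arrows leaving $i$ annihilate $K$), with complement $V'$ given by $W$ at $i$ and $V_j$ for $j \neq i$. As before $V \cong V' \oplus V''$, indecomposability forces $V' = 0$ since $K \neq 0$, and $V \cong S(i) \otimes V_i$ with $V_i = K$ simple.

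I expect the only mildly delicate step to be the duality bookkeeping in (ii): one must verify that the isomorphism \eqref{eqn: induced dual map} is natural enough that ``$\hat{V}_\alpha$ annihilates a subobject $K \hookrightarrow V_i$'' is equivalent to ``$V_\alpha$ annihilates $\nPi[v(\alpha)]_{v(\alpha)-3} \otimes K$''. This is a short diagrammatic computation with the cup/cap isotopy identities in $\cC(Q)$ together with functoriality of $\nPi[v(\alpha)]_{v(\alpha)-3} \otimes -$; granting it, both parts reduce to routine manipulations of direct sums in the semisimple category $\cC(Q)$.
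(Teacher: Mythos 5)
Your proof is correct and follows exactly the route the paper intends: the paper states this lemma without proof, remarking only that it ``can be easily checked using the semi-simple property of $\cC(Q)$,'' and your argument is precisely that check --- splitting off the cokernel of $\xi$ (resp.\ the kernel of $\vartheta$) as a complementary summand supported at the sink (resp.\ source) and invoking indecomposability. The duality point you flag in (ii), that $\hat{V}_\alpha \circ \iota_K = 0$ iff $V_\alpha \circ (\id \otimes \iota_K) = 0$, does hold by naturality of the adjunction isomorphism in \eqref{eqn: induced dual map} (a zig-zag computation), so there is no gap.
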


\begin{definition}[Reflection functors]
Let $i \in Q_0$ be a sink.
We define $R^+_i: \rep(Q) \ra \rep(\sigma_iQ)$ to be the functor constructed as follows.
Let the morphism
\[
\xi : \bigoplus_{\substack{
			\alpha \in Q_1 \\
			t(\alpha) = i}
			}	 
				\nPi[v(\alpha)]_{v(\alpha)-3} \otimes V_{s(\alpha)} \ra V_i
\]
be as defined in \cref{lem: simple epi mono condition}.
Given a representation $V \in \rep(Q)$, $R^+_i(V)$ is the representation of $\sigma_i Q$ such that for all vertices $j\neq i \in Q_0$, $R^+_i(V)_j = V_j$.
On the vertex $i$ we set $R^+_i(V)_i := \ker(\xi)$ to be the kernel of $\xi$, fitting into the exact sequence
\[
0 \ra \ker(\xi) \xra{\frk(\xi)} \bigoplus_{\substack{
		\alpha \in Q_1 \\
		t(\alpha) = i}
		} 
			\nPi[v(\alpha)]_{v(\alpha)-3} \otimes V_{s(\alpha)} \ra V_i.
\]
For each arrow $\alpha$ not ending at $i$, we set $R^+_i(V)_{\alpha} = V_{\alpha}$.
For each arrow $\beta$ ending at $i$, we consider the morphism $\Psi_\beta$ defined as the projection of $\frk(\xi)$ onto the summand $\nPi[v(\beta)]_{v(\beta)-3} \otimes  V_{s(\beta)}$:
\[
\Psi_\beta: \ker(\xi) \xhookrightarrow{\frk(\xi)} \bigoplus_{\alpha \in A_i} \nPi[v(\alpha)]_{v(\alpha)-3} \otimes V_{s(\alpha)} \xtwoheadrightarrow{\rho} \nPi[v(\beta)]_{v(\beta)-3} \otimes  V_{s(\beta)},
\]
and define $R^+_i(V)_\beta := \hat{\Psi}_\beta$ (refer to \eqref{eqn: induced dual map} for the notation $\hat{\Psi}_\beta$). 

\noindent
Given a morphism $f: V \ra W$ of representations of $Q$, we define $R^+_i(f): R^+_i(V) \ra R^+_i(W)$ by setting $R^+_i(f)_j= f_j$ for all $j\neq i$ and $R^+_i(f)_i$ is the unique morphism induced by the kernel property defining $R^+_i(V)$ and $R^+_i(W)$:
\[\begin{tikzcd}[row sep = large]
	0 & {R^+_i(V)_i} & {\bigoplus_{\substack{
			\alpha \in Q_1 \\
			t(\alpha) = i}
			}	 
				\nPi[v(\alpha)]_{v(\alpha)-3} \otimes V_{s(\alpha)}} & {V_i} \\
	0 & {R^+_i(W)_i} & {\bigoplus_{\substack{
			\alpha \in Q_1 \\
			t(\alpha) = i}
			}	 
				\nPi[v(\alpha)]_{v(\alpha)-3} \otimes W_{s(\alpha)}} & {W_i}
	\arrow[from=1-1, to=1-2]
	\arrow[from=1-2, to=1-3]
	\arrow[from=1-3, to=1-4]
	\arrow[from=2-1, to=2-2]
	\arrow[from=2-2, to=2-3]
	\arrow[from=2-3, to=2-4]
	\arrow["{\exists ! R^+_i(f_i)}"', dashed, from=1-2, to=2-2]
	\arrow["{\bigoplus_{\substack{ \scriptscriptstyle
			\alpha \in Q_1 \\ \scriptscriptstyle
			t(\alpha) = i}
			}	 
				\id \otimes f_{s(\alpha)}}"', from=1-3, to=2-3]
	\arrow["{f_i}"', from=1-4, to=2-4]
\end{tikzcd}\]
It is easy to check that $R^+_i(f)$ satisfies the required property to be a morphism of representations.
If $i \in Q_0$ is instead a source, one can define $R^-_i : \rep(Q) \ra \rep(\sigma_iQ)$ in a similar fashion, using instead the cokernel of the morphism $\vartheta$ defined in \cref{lem: simple epi mono condition}.
\end{definition}
The following property of reflection functors follows from the kernel--cokernel properties:
\begin{lemma} \label{lem: reflection almost mutual inverse}
Let $V \in \rep(Q)$.
\begin{enumerate}[(i)]
\item Suppose $i$ is a sink and $\xi$ is as in \cref{lem: simple epi mono condition}. 
If $\xi$ is an epimorphism, then $R^-_i R^+_i (V) \cong V$.
\item Suppose $i$ is a source and $\vartheta$ is as in \cref{lem: simple epi mono condition}.
If $\vartheta$ a monomorphism, then $R^+_i R^-_i (V) \cong V$.
\end{enumerate}
\end{lemma}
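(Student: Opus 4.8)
The plan is to prove (i) in full and then obtain (ii) by the dual argument --- reversing every arrow and interchanging sink $\leftrightarrow$ source, $\ker \leftrightarrow \coker$, and epimorphism $\leftrightarrow$ monomorphism throughout. So fix a sink $i \in Q_0$, set $W := R^+_i(V) \in \rep(\sigma_i Q)$ (in which $i$ is now a source), and write
\[
\vartheta_W : W_i \ra \bigoplus_{\substack{\beta \in (\sigma_i Q)_1 \\ s(\beta)=i}} \nPi[v(\beta)]_{v(\beta)-3} \otimes W_{t(\beta)}
\]
for the morphism of \cref{lem: simple epi mono condition}(ii) used to form $R^-_i(W)$. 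The arrows $\beta$ of $\sigma_i Q$ starting at $i$ are precisely the reversals of the arrows $\alpha$ of $Q$ ending at $i$, and by construction of $R^+_i$ one has $W_i = \ker(\xi)$ and $W_{t(\beta)} = V_{s(\alpha)}$, so the codomain of $\vartheta_W$ is \emph{literally} the direct sum $\bigoplus_{\alpha:\,t(\alpha)=i}\nPi[v(\alpha)]_{v(\alpha)-3}\otimes V_{s(\alpha)}$ serving as the domain of $\xi$.

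The key step is to identify $\vartheta_W$ with the kernel inclusion $\frk(\xi)$. The $\beta$-component of $\vartheta_W$ is $\hat{W_\beta}$, and $W_\beta = R^+_i(V)_\beta = \hat{\Psi}_\beta$, where $\Psi_\beta = \rho\circ\frk(\xi)$ is the composite of $\frk(\xi)$ with the projection onto the $\beta$-summand. The two ``bending'' bijections between $\Hom_{\cC(Q)}(\nPi[v(\beta)]_{v(\beta)-3}\otimes X,\, Y)$ and $\Hom_{\cC(Q)}(X,\, \nPi[v(\beta)]_{v(\beta)-3}\otimes Y)$ furnished by the self-duality \eqref{eqn: induced dual map} are mutually inverse --- this is exactly the zig-zag identity for the rigidity of $\cC(Q)$ --- so applying the bending operation once more to $W_\beta = \hat{\Psi}_\beta$ returns $\Psi_\beta$; that is, $\hat{W_\beta} = \rho\circ\frk(\xi)$. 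Since a morphism into a direct sum is determined by its components, assembling these yields $\vartheta_W = \frk(\xi)$.

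Now invoke the hypothesis that $\xi$ is an epimorphism. Since $\frk(\xi)$ is the kernel of $\xi$, the sequence
\[
0 \ra \ker(\xi) \xra{\frk(\xi)} \bigoplus_{\alpha:\,t(\alpha)=i}\nPi[v(\alpha)]_{v(\alpha)-3}\otimes V_{s(\alpha)} \xra{\xi} V_i \ra 0
\]
is short exact, so $\vartheta_W = \frk(\xi)$ is a monomorphism and $\xi$ exhibits $\coker(\vartheta_W)$. Hence $R^-_i(W)_i = \coker(\vartheta_W) \cong V_i$, while $R^-_iR^+_i(V)_j = W_j = V_j$ for $j\neq i$. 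Compatibility with the structure maps is then a short check: arrows of $Q$ not incident to $i$ are carried through unchanged, while for an arrow $\alpha$ of $Q$ ending at $i$ the dual construction gives $R^-_iR^+_i(V)_\alpha = \frc(\vartheta_W)\circ\iota_\alpha$, and since $\vartheta_W = \frk(\xi)$ with $\xi$ realising $\coker(\frk(\xi))$, this is $\xi\circ\iota_\alpha = V_\alpha$. Naturality of the isomorphisms $R^-_iR^+_i(V)\cong V$ in $V$ follows from functoriality of $\ker$ and $\coker$ and the definition of $R^{\pm}_i$ on morphisms; and (ii) is the same argument with $\ker/\coker$, sinks/sources and epis/monos swapped.

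The main obstacle I anticipate is not conceptual but the careful bookkeeping of the duality conventions: one must be certain that applying a reflection functor twice returns the \emph{identity} on the structure morphisms attached to $i$, not merely on the underlying objects. This rests entirely on the involutivity of the bending bijections --- the snake identities for the rigid structure of $\cC(Q)$ --- together with the elementary fact that a morphism to, or from, a direct sum is determined by its components. No new ingredient is needed beyond the classical Bernstein--Gelfand--Ponomarev argument.
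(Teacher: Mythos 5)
Your proof is correct and follows exactly the route the paper intends: the paper states this lemma without proof, asserting only that it ``follows from the kernel--cokernel properties,'' and your argument is the standard Bernstein--Gelfand--Ponomarev verification, with the one genuinely new point --- that the two bending bijections of \eqref{eqn: induced dual map} are mutually inverse by the snake identity, so that $\vartheta_W$ reassembles to $\frk(\xi)$ --- handled correctly. Nothing further is needed.
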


Note that $\Gamma:= \overline{Q} = \overline{\sigma_iQ}$ for any $i$, so that the dimension vectors $\dim(V)$ and $\dim(V')$ with $V \in \rep(Q)$ and $V' \in \rep(\sigma_iQ)$ are both elements in $L_\Gamma = \bigoplus_{i \in \Gamma_0} K_0(\cC(\Gamma))$.
Similar to the classical case, reflection functors and reflections on $L_\Gamma$ are related through dimension vectors as follows.
\begin{proposition} \label{prop: reflection functor and reflection element}
Let $V$ be an indecomposable representation of $Q$ and let $i \in Q_0$ be a sink (resp. source). 
If $V \not\cong S(i) \otimes A$ for any $A \in \Irr(\cC(Q))$, then 
\[
\dim(R^\pm_i(V)) = \sigma_i(\dim(V)) \in L_{\overline{Q}}.
\]
\end{proposition}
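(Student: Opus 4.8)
The plan is to reduce the statement to a one-line computation in the fusion ring $K_0(\cC(Q))$. First I would observe that the reflection functor $R^\pm_i$ only changes the object sitting at the vertex $i$: by construction $R^+_i(V)_j = V_j$ (resp.\ $R^-_i(V)_j = V_j$) for every $j \neq i$. Likewise $\sigma_i(\dim(V)) = \dim(V) - B(e_i,\dim(V))\,e_i$ differs from $\dim(V)$ only in the $i$-th coordinate. Hence it suffices to verify the single identity
\[
[R^\pm_i(V)_i] = [V_i] - B\bigl(e_i, \dim(V)\bigr) \in K_0(\cC(Q)).
\]

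Next I would unpack the right-hand side. Using $K_0(\cC(Q))$-bilinearity of $B$ together with its values on the standard basis (the definition of $B$ in \cref{defn: root system}), and recalling that there are no loops since quivers are acyclic,
\[
B\bigl(e_i,\dim(V)\bigr) = 2[V_i] - \sum_{\substack{\alpha \in Q_1 \\ \alpha \text{ incident to } i}} \bigl[\nPi[v(\alpha)]_{v(\alpha)-3}\bigr]\cdot [V_{j(\alpha)}],
\]
where $j(\alpha)$ denotes the endpoint of $\alpha$ other than $i$, and where each arrow between $i$ and a common neighbour contributes its own summand. So the identity to prove becomes
\[
[R^\pm_i(V)_i] = \sum_{\substack{\alpha \in Q_1 \\ \alpha \text{ incident to } i}} \bigl[\nPi[v(\alpha)]_{v(\alpha)-3}\bigr]\cdot [V_{j(\alpha)}] - [V_i].
\]

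For the sink case ($R^+_i$), the hypothesis $V \not\cong S(i)\otimes A$ lets me invoke \cref{lem: simple epi mono condition}(i): the morphism $\xi$ is an epimorphism, so $\img(\xi) = V_i$ and the defining complex of $R^+_i(V)_i = \ker(\xi)$ is in fact a short exact sequence
\[
0 \ra R^+_i(V)_i \ra \bigoplus_{\substack{\alpha \in Q_1 \\ t(\alpha)=i}} \nPi[v(\alpha)]_{v(\alpha)-3} \otimes V_{s(\alpha)} \xra{\ \xi\ } V_i \ra 0
\]
in $\cC(Q)$. Applying $K_0(\cC(Q))$ — which is additive on short exact sequences and sends $[X\otimes Y]$ to $[X]\cdot[Y]$ by the definition of the fusion ring (\cref{defn: fusion ring}) — gives exactly the displayed identity, since for a sink every arrow incident to $i$ terminates at $i$ and has $s(\alpha)=j(\alpha)$. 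The source case is formally dual: \cref{lem: simple epi mono condition}(ii) makes $\vartheta$ a monomorphism, so $R^-_i(V)_i = \coker(\vartheta)$ sits in the short exact sequence $0 \ra V_i \xra{\vartheta} \bigoplus_{s(\alpha)=i} \nPi[v(\alpha)]_{v(\alpha)-3}\otimes V_{t(\alpha)} \ra R^-_i(V)_i \ra 0$, and the same Grothendieck-group computation finishes the argument.

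I do not expect a genuine obstacle here; the argument is the classical one transported to the fusion-ring setting. The two points that need care are the bookkeeping for several arrows between $i$ and one of its neighbours (each must be counted separately, matching the sum that defines $B(e_i,e_j)$), and the precise use of \cref{lem: simple epi mono condition} to identify $\img(\xi)$ with $V_i$ (resp.\ to know that $\vartheta$ is injective) — this is the only place the hypothesis $V\not\cong S(i)\otimes A$ enters, and it is exactly what promotes the defining complex of the reflection functor to a short exact sequence.
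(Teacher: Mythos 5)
Your proposal is correct and follows essentially the same route as the paper: invoke \cref{lem: simple epi mono condition} to promote the defining kernel (resp.\ cokernel) sequence of $R^\pm_i(V)_i$ to a short exact sequence, pass to the fusion ring, and match the result against $[V_i]-B(e_i,\dim(V))$. The only difference is presentational — you expand $B(e_i,\dim(V))$ explicitly before comparing, whereas the paper folds that expansion into the final equality of its Grothendieck-group computation.
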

\begin{proof}
We prove the case where $i$ is a sink; the proof for when $i$ is a source follows similarly.

By \cref{lem: simple epi mono condition}, the assumption on $V$ gives us the follows exact sequence
\[
0 \ra \ker(\xi) \xra{\frk(\xi)} \bigoplus_{\substack{
		\alpha \in Q_1 \\
		t(\alpha) = i}
		} 
			\nPi[v(\alpha)]_{v(\alpha)-3} \otimes V_{s(\alpha)} \ra V_i \ra 0,
\]
where $\ker(\xi) = R^+_i(V)_i$ by definition.
This implies that
\begin{align*}
[R^+_i(V)_i] &= \left[\bigoplus_{\substack{
		\alpha \in Q_1 \\
		t(\alpha) = i}
		} 
			\nPi[v(\alpha)]_{v(\alpha)-3} \otimes V_{s(\alpha)} \right] - [V_i] \\
&= \left( \sum_{\substack{
		\alpha \in Q_1 \\
		t(\alpha) = i}
		} 
			\left[\nPi[v(\alpha)]_{v(\alpha)-3}\right][V_{s(\alpha)}] \right) - [V_i] \\
&= [V_i] - B(e_i, \dim(V)).
\end{align*}
Since $R^+_i(V)_j = V_j$ for all $j \neq i$, it follows that $\dim(R^+_i(V)) = \sigma_i(\dim(V))$ as required.
\end{proof}

From now on we shall focus on the case where $\Gamma := \overline{Q}$ is a Coxeter--Dynkin diagram (refer to \cref{thm: gabriel classification} for the classification).
Let $n := |Q_0|$ be the number of vertices of $Q$.
In this case, the vertices $Q_0$ of the Coxeter quiver $Q$ can always be equipped with an \emph{admissible (sink) ordering}: the vertices of $Q_0$ are labelled by $1,2,...,n$ so that $1$ is a sink in $Q$ and for each $i \in \{2,...,n\}$, $i$ is a sink in the quiver $\sigma_{i-1}\cdots \sigma_2\sigma_1Q$ and moreover $\sigma_n \cdots \sigma_1Q = Q$.
\begin{definition}
Let $\Gamma := \overline{Q}$ be a Coxeter--Dynkin diagram and suppose $Q$ is equipped with an admissible (sink) ordering.
The element $c:= \sigma_n \cdots \sigma_2\sigma_1 \in W_\Gamma$ (see \cref{defn: root system}(\ref{defn: reflection group}) for the definition of $W_\Gamma$) is called the \emph{Coxeter element} associated to the ordering.
Similarly, the composition of reflection functors $C^+:= R^+_n \cdots R^+_1 : \rep(Q) \ra \rep(Q)$ is called the \emph{Coxeter functor} associated to the ordering.
\end{definition}
The following slight generalisation of a (well-known) lemma will be useful to us.
\begin{lemma} \label{lem: coxeter eventually non-positive}
Suppose $\Gamma = \overline{Q}$ is a Coxeter--Dynkin diagram with an admissible (sink) ordering on $Q$.
If $v := (v_i)_{i \in \Gamma_0} \in L_\Gamma$ is a positive element (see \cref{defn: root system}(\ref{defn: positive})), then there exists $r \in \mathbb{N}$ such that $c^r(v)$ is no longer a positive element.
\end{lemma}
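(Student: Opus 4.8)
The plan is to reduce the statement to the classical fact about Coxeter transformations of the unfolded quiver $\check{Q}$. Write $R := K_0(\cC(\Gamma))$ with its basis $\{[A] \mid A \in \Irr(\cC(\Gamma))\}$ of classes of simples, and define an abelian-group isomorphism $\Theta \colon L_\Gamma \xrightarrow{\ \sim\ } L_{\check{Q}} = \bigoplus_{(A,i) \in \check{Q}_0}\bbZ$ by expanding each coordinate in this basis: if $v_i = \sum_A v_i^A\,[A]$, set $\Theta(v)_{(A,i)} := v_i^A$. (One checks this is precisely the map induced on Grothendieck groups by the equivalence $\cU$ of \cref{thm: rep Q and checkQ equiv}, via the dimension vectors.) Comparing \cref{defn: root system} \ref{defn: positive} with the usual notion of positivity in $\bbZ^{\check{Q}_0}$, one sees immediately that $v \in L_\Gamma$ is positive if and only if $\Theta(v)$ has all coordinates $\geq 0$ and $\Theta(v) \neq 0$.

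The key step is to check that $\Theta$ intertwines each reflection $\sigma_i$ on $L_\Gamma$ with the product $\Sigma_i := \prod_{A \in \Irr(\cC(\Gamma))}\sigma_{(A,i)}$ of the classical simple reflections of $\check{Q}$ at the vertices lying over $i$. Since $\Gamma$ is a Coxeter graph, for $i \neq j$ joined by an edge with label $m_{ij}$ one has $B(e_i,e_j) = -[\nPi[m_{ij}]_{m_{ij}-3}]$; and by the very construction of $\check{Q}$ (see \cref{subsec: unfolding} and \cref{subsec: C(Q) properties}) the number of edges of $\check{Q}$ between $(A,i)$ and $(B,j)$ equals the multiplicity of $A$ in $\nPi[m_{ij}]_{m_{ij}-3}\otimes B$, i.e.\ the coefficient of $[A]$ in $[\nPi[m_{ij}]_{m_{ij}-3}]\cdot[B]$ (which is symmetric in $A,B$ by self-duality), while no two vertices lying over the same $i$ are ever adjacent in $\check{Q}$. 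Expanding $\sigma_i(v) = v - B(e_i,v)e_i$ and each $\sigma_{(A,i)}$ in the simple basis and matching coefficients then yields $\Theta\circ\sigma_i = \Sigma_i\circ\Theta$ — this is the one genuine computation in the argument. Hence, for the given admissible sink ordering $1,\dots,n$ on $Q$ with Coxeter element $c = \sigma_n\cdots\sigma_1$, we obtain $\Theta\circ c = \check{c}\circ\Theta$ with $\check{c} := \Sigma_n\cdots\Sigma_1$; and since the $\sigma_{(A,i)}$ within a single block commute and every vertex of $\check{Q}$ occurs exactly once, $\check{c}$ is a Coxeter element of $W_{\overline{\check{Q}}}$.

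It then remains to invoke the classical situation for $\check{Q}$. Because $\Gamma$ is a Coxeter--Dynkin diagram, \cref{thm: gabriel classification} (equivalently \cref{fig: unfolding finite type coxeter}) identifies $\overline{\check{Q}}$ with a disjoint union of $ADE$ Dynkin diagrams, so $\check{c}$ is a product of Coxeter elements of irreducible finite Coxeter groups; in particular $\check{c}$ has finite order $h$ and has no nonzero fixed vector in $L_{\check{Q}}$ (on each connected component the exponents all lie strictly between $0$ and the Coxeter number, so $1$ is not an eigenvalue of $\check{c}$). Now let $v \in L_\Gamma$ be positive and put $w := \Theta(v)$, so $w \geq 0$ and $w \neq 0$. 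If $\check{c}^{\,r}(w) \geq 0$ held for every $r \in \mathbb{N}$, then $y := \sum_{r=0}^{h-1}\check{c}^{\,r}(w)$ would satisfy $\check{c}(y) = y$, forcing $y = 0$; but $y \geq w \geq 0$ with $w \neq 0$, a contradiction. (This last step is exactly the classical lemma underlying Gabriel's theorem for $\check{Q}$, cf.\ \cite{BGP_73}.) So there is an $r$ with $\check{c}^{\,r}(w) = \Theta(c^r(v))$ having a negative coordinate; as $c$ is invertible, $\check{c}^{\,r}(w) \neq 0$, and translating back through $\Theta$ shows that $c^r(v)$ is not positive in the sense of \cref{defn: root system} \ref{defn: positive}.

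The main obstacle I anticipate is precisely the intertwining identity $\Theta\circ\sigma_i = \Sigma_i\circ\Theta$: one must match, in full generality (both parities of the labels $m_{ij}$, and the Deligne-tensor structure of $\cC(\Gamma)$), the edge multiplicities of $\check{Q}$ against the off-diagonal entries of the fusion-ring-valued form $B$. The remaining ingredients — the positivity dictionary under $\Theta$, the verification that $\check{c}$ is a genuine Coxeter element, and the finite-order/no-fixed-vector endgame — are either immediate from the definitions or entirely classical.
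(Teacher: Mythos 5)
Your argument is correct, but it follows a genuinely different route from the paper's. You unfold: transporting $v$ along the coordinate isomorphism $\Theta\colon L_\Gamma \to \bbZ^{\check{Q}_0}$, checking that $\sigma_i$ corresponds to the commuting product $\prod_{A}\sigma_{(A,i)}$ of classical simple reflections (the linear-algebra shadow of the relation \eqref{eqn: reflect quiver and unfold} noted later in the paper), so that $c$ becomes a genuine Coxeter element of the $ADE$ Coxeter group of $\overline{\check{Q}}$, and then invoking the classical facts that such an element has finite order and no eigenvalue $1$ before running the averaging argument. The paper instead works intrinsically over the fusion ring: it asserts that $B(-,-)$ is non-degenerate on $L_\Gamma$ (so $c(x)=x$ forces $x=0$), uses \cref{prop:rootproperties} to get $c^m=\id$ from finiteness of $W_\Gamma$, and applies the same averaging/positivity argument directly to $v + c(v) + \cdots + c^{m-1}(v)$. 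The trade-off: the paper's proof is shorter and avoids the unfolding combinatorics entirely, but leaves the non-degeneracy of the fusion-ring-valued form as an ``easy to check'' claim; your version replaces that claim by the completely standard eigenvalue fact for Coxeter elements of finite Weyl groups, at the cost of having to verify the intertwining identity $\Theta\circ\sigma_i = \Sigma_i\circ\Theta$ (which your multiplicity count, using that no simple occurs more than once in $\nPi[m]_{m-3}\otimes B$, does correctly). Both endgames are the same telescoping-sum argument, and your final translation back through $\Theta$ to the paper's notion of positivity is accurate.
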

\begin{proof}
When $\Gamma$ is a Coxeter--Dynkin diagram, it is easy to check that $B(-,-)$ is non-degenerate; namely $B(-,v) = 0$ if and only if $v = 0$.
It follows that $c(x) = x \iff x=0$.
Moreover, $W_\Gamma$ is a finite Coxeter group by \cref{prop:rootproperties}, so $c^m = \id$ for some $m \in \mathbb{N}$.
This means that with $x:= v + c(v) + \cdots + c^{m-1}(v)$, we get $c(x) = x$, which implies that $x = 0$.
Since $v$ is a positive element and the set of positive elements is closed under addition, it must be the case that at least one of $c(v), \cdots, c^{m-1}(v)$ is not a positive element.
\end{proof}

We are now ready to prove the second part of Gabriel's theorem for Coxeter quivers.
\begin{theorem}\label{thm: gabriel root}
Suppose $Q$ is a Coxeter quiver whose underlying labelled graph $\Gamma:= \overline{Q}$ is a Coxeter--Dynkin diagram.
The dimension vector map $\dim$ induces a bijection between the set of indecomposable representations of $Q$ (up to isomorphism) and the set of positive extended roots $\Phi_+^{ext}$ of $L_\Gamma$.
Moreover, every indecomposable representation $V \in \rep(Q)$ is of the form
\[
V \cong V' \otimes A
\]
for some $V' \in \rep(Q)$ that corresponds to a positive root (i.e. $\dim(V') \in \Phi_+$) and some simple object $A$ in $\cC(\Gamma)$.
\end{theorem}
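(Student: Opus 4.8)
The plan is to adapt the Bernstein--Gelfand--Ponomarev argument via reflection and Coxeter functors, the one genuinely new feature being that the ``exceptional'' objects now form whole families $\{S(i)\otimes A : A\in\Irr(\cC(Q))\}$ and that these families interact with the module-category action $-\otimes A$. First I would assemble a package of facts about reflection functors, all obtained by the classical arguments from \cref{lem: simple epi mono condition}, \cref{lem: reflection almost mutual inverse} and \cref{prop: reflection functor and reflection element}, together with two elementary observations: (a) for an indecomposable $V$ and a sink $i$, one has $V\cong S(i)\otimes A$ if and only if $\dim(V)=[A]e_i$ --- indeed, if $\dim(V)=[A]e_i$ then $V_j=0$ for all $j\neq i$, so the domain of $\xi$ is the zero object, $\xi$ cannot be an epimorphism, and \cref{lem: simple epi mono condition} forces $V\cong S(i)\otimes A'$ with necessarily $A'\cong A$; and (b) each $R_i^{\pm}$ commutes with $-\otimes A$ up to natural isomorphism, since $-\otimes A$ is exact and $R_i^{\pm}$ is built from kernels, cokernels and finite biproducts. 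Granting these, one shows: if $i$ is a sink of $Q$ and $V$ is indecomposable with $V\not\cong S(i)\otimes A$ for every $A\in\Irr(\cC(Q))$, then $R_i^{+}(V)$ is again a nonzero indecomposable, satisfies $R_i^{+}(V)\not\cong S(i)\otimes A'$ for every $A'$ (because $\dim R_i^{+}(V)=\sigma_i\dim(V)$ while $\sigma_i([A']e_i)=-[A']e_i$ is not positive, whereas $\dim R_i^{+}(V)$ is), and $R_i^{-}R_i^{+}(V)\cong V$; consequently $R_i^{+}$ restricts to a bijection from the set of indecomposables of $\rep(Q)$ not isomorphic to any $S(i)\otimes A$ onto the corresponding set for $\rep(\sigma_iQ)$, with inverse $R_i^{-}$, and dually at a source.

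Next, fix an admissible sink ordering $1,\dots,n$ with Coxeter element $c$ and Coxeter functor $C^{+}$. Given an indecomposable $V$, apply $R_1^{+},R_2^{+},\dots$, cycling with period $n$ (the $m$-th functor is $R_{j_m}^{+}$, $j_m\equiv m\bmod n$, a sink of the current quiver). So long as the current $V_{m-1}$ is not of the form $S(j_m)\otimes A$, the package produces a nonzero indecomposable $V_m=R_{j_m}^{+}(V_{m-1})$ with $\dim V_m=\sigma_{j_m}\dim V_{m-1}$; since $V_{rn}=(C^{+})^{r}(V)$ and $\dim V_{rn}=c^{r}\dim V$ is a positive element for every $r$ (dimension vectors of nonzero representations are positive), this contradicts \cref{lem: coxeter eventually non-positive} unless at some first step $k_0$ we reach $V_{k_0-1}\cong S(j_{k_0})\otimes A$. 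Composing the backward bijections $R_{j_{\ell}}^{-}$ from the package (valid since $V_0,\dots,V_{k_0-2}$ were non-exceptional) and using that $R^{\pm}$ commutes with $-\otimes A$, we obtain $V\cong V'\otimes A$ with $V':=R_{j_1}^{-}\cdots R_{j_{k_0-1}}^{-}(S(j_{k_0}))$. Here $V'$ is indecomposable: if $V'\cong X\oplus Y$ then $V\cong(X\otimes A)\oplus(Y\otimes A)$ forces, say, $X\otimes A=0$, hence $X=0$ since $A\neq0$. A short bookkeeping check --- the backward intermediate representations are exactly the $V_{\ell}$ with the $A$-twist stripped off, hence non-exceptional for the same reason the $V_{\ell}$ are --- then lets us apply \cref{prop: reflection functor and reflection element} along the backward chain to get $\dim V'=(\sigma_{j_1}\cdots\sigma_{j_{k_0-1}})(e_{j_{k_0}})\in\Phi$, which is positive since $V'\neq0$. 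Thus $\dim V=[A]\dim V'\in[A]\cdot\Phi_{+}\subseteq\Phi_{+}^{ext}$, proving both that $\dim$ maps indecomposables into $\Phi_{+}^{ext}$ and the ``moreover'' clause.

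Injectivity of $\dim$ follows from the same process: if $\dim V=\dim W$, then by observation (a) the two representations stay non-exceptional together and reach the exceptional case at the same step $k_0$ and the same object $S(j_{k_0})\otimes A$ (with $A$ read off from $\dim V_{k_0-1}=[A]e_{j_{k_0}}$), whence $V_{k_0-1}\cong W_{k_0-1}$ and then $V\cong W$ by applying the backward bijections. For surjectivity, given $\beta\in\Phi_{+}^{ext}$ run the reflections $\sigma_{j_1},\sigma_{j_2},\dots$ on $\beta$ itself; each iterate is an extended root, hence positive or negative by \cref{prop:rootproperties}, and by \cref{lem: coxeter eventually non-positive} some iterate fails to be positive. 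At the first such step, the positive extended root $\beta_{k_0-1}$ has $\sigma_{j_{k_0}}\beta_{k_0-1}$ negative, and since $\sigma_{j_{k_0}}$ only alters the $j_{k_0}$-coordinate this forces $\beta_{k_0-1}=[C]e_{j_{k_0}}$ for some nonzero object $C$ which, being a positive extended root concentrated at a single vertex (cf.\ \cref{defn: root system}), must be simple. Then $S(j_{k_0})\otimes C$ is a nonzero indecomposable of the current quiver with dimension vector $\beta_{k_0-1}$, non-exceptional at the source $j_{k_0-1}$ (it is concentrated at $j_{k_0}\neq j_{k_0-1}$), so applying the backward bijections $R_{j_1}^{-}\cdots R_{j_{k_0-1}}^{-}$ yields an indecomposable of $\rep(Q)$ with dimension vector $\beta$. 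Combined with the previous paragraph, this gives the asserted bijection.

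The main obstacle is the package in the first paragraph: establishing, in this twisted fusion-categorical setting, that the reflection functors restrict to mutually inverse bijections between the appropriate classes of indecomposables. The classical proof --- the splitting argument based on the monomorphism attached to $R_i^{+}(V)$ together with \cref{lem: reflection almost mutual inverse} --- does go through, but one must carry along the self-duality isomorphisms underlying the $\widehat{(\ )}$ operation and the twists $-\otimes A$ at every step, and this is the most delicate bookkeeping. A secondary point is the surjectivity claim that a positive extended root of the form $[C]e_i$ necessarily has $C$ simple, which rests on the explicit description of $\Phi_{+}^{ext}$ in \cref{defn: root system}.
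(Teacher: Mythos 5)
Your proposal is correct and follows essentially the same route as the paper: an admissible sink ordering, the Coxeter functor, \cref{prop: reflection functor and reflection element} and \cref{lem: coxeter eventually non-positive} to force every indecomposable down to some $S(k+1)\otimes A$ after finitely many reflection functors, and the same dimension-vector bookkeeping for injectivity, surjectivity, and the $V\cong V'\otimes A$ clause. The ``package'' you single out as the main obstacle (reflection functors restricting to mutually inverse bijections on non-exceptional indecomposables, and their compatibility with $-\otimes A$) is exactly what the paper also delegates to the classical BGP argument via \cref{lem: simple epi mono condition} and \cref{lem: reflection almost mutual inverse}, so your extra care there is a refinement rather than a divergence.
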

\begin{proof}
The proof of the first statement is essentially identical to the classical case, using analogous lemmas that we have prepared beforehand.
Start by fixing some admissible (sink) ordering on $Q_0$.
The fact that dimension vectors can not be negative together with \cref{prop: reflection functor and reflection element} and \cref{lem: coxeter eventually non-positive} imply that  for any indecomposable $V \in \rep(Q)$, there is a shortest possible expression $R^+_{k+1} \cdots R^+_1 (C^+)^r$ such that
\begin{equation}\label{eqn: coming from simples}
R^+_k \cdots R^+_1 (C^+)^r (V) \cong S(k+1) \otimes A
\end{equation}
for some $A \in \Irr(\cC(\Gamma))$.
Moreover, using \cref{prop: reflection functor and reflection element} again, we get 
\[
\dim(V) = c^{-r}s_1\cdots s_k ([A] \cdot e_{k+1}) = [A]\cdot (c^{-r}s_1\cdots s_k (e_{k+1})) \in \Phi_+^{ext},
\]
which shows that $\dim$ indeed maps indecomposables to positive extended roots.

Now suppose $\dim(V) = \dim(V')$ for two indecomposables $V, V'\in \rep(Q)$.
Since $\dim(V) = \dim(V')$, the same functor $R^+_k \cdots R^+_1 (C^+)^r$ must send both $V$ and $V'$ to simple objects $S(k+1)\otimes A$ and $S(k+1)\otimes A'$ respectively, where  $A$ and $A'$ are a priori possibly distinct simple objects in $\cC(\Gamma)$.
However, since
\[
[A]\cdot e_{k+1} = s_k \cdots s_1 c^r (\dim(V)) = s_k \cdots s_1 c^r (\dim(V')) = [A']\cdot e_{k+1},
\]
being free over $K_0(\cC(\Gamma))$ says that $[A] = [A']$ and hence $A\cong A'$.
It follows that $V \cong V'$ by applying the appropriate (inverse) reflection functors.
This shows that $\dim$ is injective.

Now to show surjectivity, suppose $v \in \Phi^{ext}_+$ is an arbitrary positive extended root.
Then once again we have some shortest possible expression $\tau:= s_{k+1} \cdots s_1 c^r$ such that $\tau(v)$ is no longer positive.
Since $\Phi^{ext} = \Phi^{ext}_+ \coprod -\Phi^{ext}_+$ by \cref{prop:rootproperties} and each $s_i$ only changes the coefficient of one basis element, it fact we must have that $s_k\cdots s_1 c^r(v) = [A] \cdot e_{k+1}$ for some $A \in \Irr(\cC(\Gamma))$.
Consider the functor $C^- := R^-_1\cdots R^-_n$.
It is clear now that the indecomposable object $V := (C^-)^r R^-_1 \cdots R^-_k(S(k+1))\otimes A$ maps to $v$ under $\dim$, using the same arguments as before.

For the final statement, given $V$ together with \eqref{eqn: coming from simples}, we set $V':= (C^-)^r R^-_1 \cdots R^-_k(S(k+1))$, where it follows that $V' \otimes A \cong V$ with $\dim(V') \in \Phi_+$ by construction.
\end{proof}

\begin{example}
We continue again with the Coxeter quiver from \cref{eg:I2(5)Coxeterquiver}.
Its underlying labelled graph $\Gamma := \overline{Q}$ is of type $I_2(5) = \dynkin [Coxeter,gonality=5]I{}$, and so the corresponding group $W_\Gamma$ is isomorphic to the Coxeter group of $I_2(5)$, which is the group of isometries of the regular pentagon.

For simplicity, we shall identify $K_0(\cC(Q))$ with $\bbZ[\delta] \subset \bbR$ via sending $[\1] \mapsto 1$ and $[\Pi] \mapsto \delta$, where $\delta = 2\cos(\pi/5)$ is the golden ratio satisfying $\delta^2 = 1+\delta$.
The 10 extended positive roots in $L_\Gamma = \bbZ[\delta] \oplus \bbZ[\delta]$ are listed below.
The 5 positive roots are listed on the left column and the extended positive roots on the right are obtained from the left by scalar multiplying by $\delta$.
\begin{multicols}{2}
\begin{center}
\begin{enumerate}[(i)]
\item $\begin{pmatrix} 1 & 0 \end{pmatrix}$; \\
\item $\begin{pmatrix} 0 & 1 \end{pmatrix}$; \\
\item $\begin{pmatrix} 1 & \delta \end{pmatrix}$; \\
\item $\begin{pmatrix} \delta & 1 \end{pmatrix}$; \\
\item $\begin{pmatrix} \delta & \delta \end{pmatrix}$; \\
\item $\begin{pmatrix} \delta & 0 \end{pmatrix}$; \\
\item $\begin{pmatrix} 0 & \delta \end{pmatrix}$; \\
\item $\begin{pmatrix} \delta & 1+\delta \end{pmatrix}$; \\
\item $\begin{pmatrix} 1+\delta & \delta \end{pmatrix}$; \\
\item $\begin{pmatrix} 1+\delta & 1+\delta \end{pmatrix}$.
\end{enumerate}
\end{center}
\end{multicols}
This list is also in correspondence with the list of indecomposable objects in \cref{eg:unfoldedA4} via taking the dimension vector.
In particular, the indecomposable representations on the right column in \cref{eg:unfoldedA4} are obtained from the left by tensoring with $\Pi$ on the right.
\end{example}

\section{Path algebra and its modules}\label{sec: path algebra}
As in the classical case, we show that a suitable notion of path algebra of a Coxeter quiver $Q$ can be defined so that its category of modules is equivalent (as abelian \emph{module categories} over $\cC(Q)$) to the category of representations of $Q$.
We first recall the notions of an algebra and its modules in fusion categories; see e.g. \cite[Section 3.1]{ostrik_2003}.

\subsection{Algebras and modules in tensor categories}
Throughout this subsection, $\cC$ denotes a strict fusion category.

\begin{definition}
An \emph{algebra} in $\cC$ is an object $A$ in $\cC$ equipped with two maps $\mu: A\otimes A \ra A$ (multiplication) and $\eta: \1 \ra A$ (unit), satisfying the associativity and the left and right unital constraints:
\begin{center}
\begin{tikzcd}
A\otimes A \otimes A 
	\ar[r, "\mu \otimes \id"] 
	\ar[d, "\id \otimes \mu"] 
& A\otimes A 
	\ar[d, "\mu"] \\
A \otimes A 
	\ar[r, "\mu"] 
& A
\end{tikzcd}
(associativity), \quad
\begin{tikzcd}
\1 \otimes A 
	\ar[r, "\eta \otimes \id"] 
	\ar[dr, "=", swap] 
& A\otimes A  
	\ar[d, "\mu"]
& A \otimes \1 
	\ar[l, "\id \otimes \eta", swap] 
	\ar[dl, "="] \\
& A  
\end{tikzcd}
(unital).
\end{center}
\end{definition}

\begin{definition}\label{defn: modules over algebra}
Let $A$ and $B$ be algebras in $\cC$.
A left (resp. right) \emph{module} of $A$ is an object $M$ in $\cC$ equipped an action map $\lambda: A\otimes M \ra M$ (resp. $\lambda: M\otimes A \ra M$) satisfying the associativity and the left (resp. right) unital constraints:
\begin{center}
\begin{tikzcd}
A\otimes A \otimes M 
	\ar[r, "\mu \otimes \id"] 
	\ar[d, "\id \otimes \lambda"] 
& A\otimes M 
	\ar[d, "\lambda"] \\
A \otimes M 
	\ar[r, "\lambda"] 
& M
\end{tikzcd}
(associativity), \quad
\begin{tikzcd}
\1 \otimes M 
	\ar[r, "\eta \otimes \id"] 
	\ar[dr, "=", swap] 
& A\otimes M  
	\ar[d, "\lambda"] \\
& M  
\end{tikzcd}
(left unital).
\end{center}
A \emph{bimodule} over $(A,B)$ is an object $M$ which is both a left module over $A$ and a right module over $B$, where the two module structures are compatible:
\begin{center}
\begin{tikzcd}
A\otimes M \otimes B 
	\ar[r, "\lambda \otimes \id"] 
	\ar[d, "\id \otimes \lambda'", swap] 
& M\otimes B 
	\ar[d, "\lambda'"] \\
A \otimes M 
	\ar[r, "\lambda"] 
& M
\end{tikzcd} (compatible actions).
\end{center}
We denote the category of left (resp. right) modules over $A$ as ${}_A\mod$ (resp. $\mod_A$); similarly the category of bimodules over $(A,B)$ is denoted as ${}_A\mod_{B}$.
\end{definition}

Note that ${}_A \mod$ (resp. $\mod_A$) is naturally a right (resp. left) module category over $\cC$; namely there is a natural tensor functor from $\cC^{\otimes op}$ to $\cEnd({}_A\mod)$ defined by $A \mapsto -\otimes A$ (resp. $\cC^{\otimes op}$ to $\cEnd(\mod_A)$ defined by $A \mapsto A\otimes -$).

\begin{definition}
An algebra $A$ in $\cC$ is said to be \emph{semi-simple} if its category of left modules is semi-simple.
\end{definition}

\begin{example}
\begin{enumerate}[(i)]
\item Let $\1$ denote the monoidal unit of $\cC$. Then $\1$ is always an algebra object, induced by the monoidal structure of $\cC$. Moreover, ${}_{\1}\mod$ is equivalent to $\cC$ and therefore $\1$ is a semi-simple algebra. 
\item The direct sum $\bigoplus_i \1$ can also be made into a semi-simple algebra, where multiplication between differently-indexed $\1$'s are zero.
\end{enumerate}
\end{example}

\begin{definition}
Let $A, B$ and $C$ be algebra objects in $\cC$.
For $(M, \theta)$ and $(N, \theta')$ objects in ${}_A\mod_B$ and ${}_B\mod_C$ respectively, we define $M\otimes_B N$ to be the cokernel of the morphism $\theta\otimes \id - \id \otimes \theta'$
\[
M\otimes_B N := \coker(M\otimes B \otimes N \xra{\theta\otimes \id - \id \otimes \theta'} M \otimes N) \in {}_A\mod_C.
\]
\end{definition}

\subsection{Coxeter quiver path algebras} \label{subsec: path algebra}
Let $Q=(Q_0, Q_1, s,t,v)$ be a Coxeter quiver and let $\cC(Q)$ be its associated fusion category.
We shall construct an algebra object associated to $Q$ in $\cC(Q)$ as follows.

Firstly, recall that the monoidal unit $\1$ is  an algebra object in a canonical way.
For each vertex $i \in Q_0$, let $(e_i := \1, \mu_i := \id, \eta_i := \id)$ be the canonical algebra object associated to $\1$, and consider the semi-simple algebra $(S := \bigoplus_{i \in Q_0} e_i, \mu_S, \eta_S)$ formed by their direct sum; namely $\eta_S = [\eta_i]_{i\in Q_0} : \1 \ra S$ and $\mu_S: S \otimes S \ra S$ is defined on each summand $e_i \otimes e_j$ by $\mu_i$ if $i=j$ and otherwise it is zero.

To each arrow $i \xra{v(\alpha)} j$ in $Q$, we assign an object $p(\alpha) := \nPi[v(\alpha)]_{v(\alpha)-3}$.
Define $D:= \bigoplus_{\alpha \in Q_1} p(\alpha)$ and view it as a bimodule over $S$ as if we are doing ``path multiplication'', namely:
\begin{itemize}
\item as a left module over $S$, the map $\lambda: S\otimes D \ra D$ is defined on each summand $e_i \otimes p(\alpha)$ by zero if $t(\alpha) \neq i$, and otherwise it is the identity map on $p(\alpha)$ (note that $p(\alpha) = \1 \otimes p(\alpha)$).
\item the right module structure over $S$ is defined similarly, where the map $\lambda': D\otimes S \ra D$ is defined on each summand $p(\alpha) \otimes e_i$ by zero if $s(\alpha) \neq i$, and otherwise it is the identity map on $p(\alpha)$.
\end{itemize}

\begin{lemma}
We have the following isomorphism of $(S,S)$-bimodules:
\[
\overbrace{D \otimes_S D \otimes_S \cdots  \otimes_S D}^{n \text{ times}} \cong \bigoplus_{\substack{(\alpha_n, \alpha_{n-1}, ..., \alpha_1)\\ \text{ is a path} } } p(\alpha_n) \otimes p(\alpha_{n-1}) \otimes \cdots \otimes p(\alpha_1).
\]
\end{lemma}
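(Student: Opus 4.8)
The plan is to induct on $n$. The case $n=1$ is the definition of $D$, so the content is entirely in the inductive step, which amounts to computing $M \otimes_S D$ for $M := D^{\otimes_S(n-1)}$. By the inductive hypothesis $M \cong \bigoplus_{(\alpha_{n-1},\dots,\alpha_1)} p(\alpha_{n-1})\otimes\cdots\otimes p(\alpha_1)$, the sum running over length-$(n-1)$ paths, with the ``endpoint'' $(S,S)$-bimodule structure: the left action on the summand indexed by $(\alpha_{n-1},\dots,\alpha_1)$ is the projection $S \to e_{t(\alpha_{n-1})}$ and the right action the projection $S \to e_{s(\alpha_1)}$. The only genuine computation is the instance $n=2$, $M=D$; every later step runs verbatim.

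To compute $M \otimes_S D$ I would unwind the defining coequaliser of $\rho_M \otimes \id_D$ and $\id_M \otimes \lambda_D$ from $M\otimes S\otimes D$ to $M\otimes D$, where $\rho_M$ is the right $S$-action on $M$ and $\lambda_D$ the left $S$-action on $D$. Since $S=\bigoplus_{i\in Q_0} e_i$ with each $e_i=\1$, we get $M\otimes S\otimes D=\bigoplus_{i\in Q_0} M\otimes D$, and on the $i$-th copy the two maps are $\pi^s_i\otimes\id_D$ and $\id_M\otimes\pi^t_i$, where $\pi^s_i$ projects $M$ onto the sub-bimodule of paths with $s(\alpha_1)=i$ and $\pi^t_i$ projects $D$ onto $\bigoplus_{\gamma:\,t(\gamma)=i} p(\gamma)$. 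As $\cC(Q)$ is semisimple, the coequaliser is the cokernel of the difference of these two maps. Restricting to a fixed summand $\bigl(p(\alpha_{n-1})\otimes\cdots\otimes p(\alpha_1)\bigr)\otimes p(\gamma)$ of the target, the difference map out of the matching summands of $M\otimes S\otimes D$ is, on the $i$-th copy, multiplication by the scalar $\delta_{i,s(\alpha_1)}-\delta_{i,t(\gamma)}$. If $s(\alpha_1)\neq t(\gamma)$ this is an isomorphism for $i=s(\alpha_1)$, so the summand is killed; if $s(\alpha_1)=t(\gamma)$ every coefficient vanishes and the summand survives intact. Since $s(\alpha_1)=t(\gamma)$ is exactly the condition that $(\alpha_{n-1},\dots,\alpha_1,\gamma)$ is a path, and the residual left and right $S$-actions on the surviving summand manifestly record $t(\alpha_{n-1})$ and $s(\gamma)$, relabelling $(\alpha_{n-1},\dots,\alpha_1,\gamma)$ as $(\alpha_n,\dots,\alpha_1)$ yields the claimed isomorphism of $(S,S)$-bimodules and closes the induction; here one also invokes associativity of $\otimes_S$ so that $D^{\otimes_S n}$ is unambiguous.

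The only point requiring care — and the main (minor) obstacle — is the justification that the coequaliser is computed as the cokernel of the difference of the two structure maps in this ambient semisimple setting, together with carrying the bimodule structure faithfully through the induction so that the $n=2$ argument applies unchanged. A slicker but more set-up-heavy alternative would be to identify ${}_S\mod_S$ with the category of $Q_0\times Q_0$-graded objects of $\cC(Q)$, under which $\otimes_S$ becomes ``matrix multiplication'' and $D$ the adjacency object $D_{j,i}=\bigoplus_{\alpha:\, i\to j} p(\alpha)$; then $D^{\otimes_S n}$ is by inspection the sum over length-$n$ paths. I would most likely present the direct computation above, as it is short.
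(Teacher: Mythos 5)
Your proof is correct and is essentially the argument the paper has in mind: its own ``proof'' consists of the single sentence that this is a simple calculation of the required coequaliser property, left to the reader, and your inductive unwinding of the coequaliser (as the cokernel of the difference map, computed summand by summand with the scalar $\delta_{i,s(\alpha_1)}-\delta_{i,t(\gamma)}$) is exactly that calculation carried out. The only cosmetic remark is that the identification of a coequaliser of two parallel maps with the cokernel of their difference holds in any abelian category and does not require semisimplicity, so your appeal to semisimplicity there is harmless but unnecessary.
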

\begin{proof}
This is a simple calculation of showing the required cokernel property, which we leave to the reader.
\end{proof}

We are now ready to define the path algebra of $Q$.
\begin{definition}\label{defn: path alg}
The \emph{path algebra} $\Path(Q)$ of $Q$ is an algebra object in $\cC(Q)$ defined as the free tensor algebra of the $(S,S)$-bimodule $D$.
To spell it out, denote 
\[
T^k(D):= \overbrace{D \otimes_S D \otimes_S \cdots  \otimes_S D}^{k \text{ times}}, 
\]
where $T^0(D) := S$ by definition.
As an object in $\cC(Q)$, the path algebra is given by
\[
\Path(Q) := \bigoplus_{k=0}^\infty T^k(D).
\]
The multiplication map $\mu: \Path(Q) \otimes \Path(Q) \ra \Path(Q)$ is induced by the tensor product over $S$, namely the map $T^k(D) \otimes T^\ell(D) \ra T^{k+\ell}(D)$ comes from the cokernel structure of $T^{k+\ell}(D) = T^k(D) \otimes_S T^\ell(D)$.
The unit map is given by the composition $\eta: \1 \xra{\eta_S} S \xhookrightarrow{\subseteq} \Path(Q)$.
\end{definition}
Note that since $Q$ is assumed to be acyclic, $D \otimes_S D \otimes_S D \otimes_S \cdots$ eventually becomes 0, hence $\Path(Q)$ is defined by a finite direct sum and in particular is an object in $\cC(Q)$.
(Otherwise the path algebra will be an algebra object in the ind completion of $\cC(Q)$ instead.)

Let us make a few observations.
Firstly, note that every $\Path(Q)$-action map $\lambda: \Path(Q) \otimes M \ra M$ on its module $M$ is completely determined by its restriction to $e_i \otimes M$ and $p(\alpha) \otimes M$ for each $i$ and each arrow $j \xra{\alpha} k$ due to the associativity requirement.
Moreover, every $\Path(Q)$-module $(M, \lambda)$ can be restricted to a $S$-module $(M, \lambda|_{S\otimes M})$.
Note that $M$ as a module over $S$ is semi-simple (since $S$ is semi-simple), decomposing into a direct sum of $e_i$-modules, which can be explicitly described as follows.
We define
\begin{equation} \label{eqn: eiM}
e_iM := \img(\lambda|_{e_i\otimes M}) \xhookrightarrow{\fri(\lambda|_{e_i\otimes M})} M
\end{equation}
as the image of the morphism $\lambda|_{e_i\otimes M}$, so that we have a unique factorisation of the morphism $\lambda|_{e_i\otimes M}$ into
\begin{equation} \label{eq: lambda epi-mono decomp}
\begin{tikzcd}
& e_iM \ar[rd, hookrightarrow, "\fri(\lambda|_{e_i\otimes M})"] & \\
M = e_i \otimes M \ar[twoheadrightarrow, ur, "\overline{\lambda|_{e_i\otimes M}}"] \ar[rr, "\lambda|_{e_i\otimes M}"] & & M .
\end{tikzcd}
\end{equation}
We have the following decomposition of $M$ (as an $S$-module and as an object in $\cC(Q)$):
\begin{lemma} \label{lem: vertex decomposition}
The morphisms 
\[
\left[\overline{\lambda|_{e_i\otimes M}} \right]_{i \in Q_0}: M \ra \bigoplus_{i \in Q_0} e_iM, \text{ and} \quad 
	\left[\fri(\lambda|_{e_i\otimes M}) \right]^T_{i \in Q_0}: \bigoplus_{i \in Q_0} e_iM \ra M
\]
are mutual inverses of $S$-modules, so that $M \cong \bigoplus_{i\in Q_0} e_iM$.
\end{lemma}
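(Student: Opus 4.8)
The plan is to recognise the situation as the standard one: a complete system of orthogonal idempotents on an object of an abelian category splits it as a direct sum of their images, and here the idempotents are the components of the $S$-action on $M$.

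First I would unwind the restricted $S$-module structure. Since each $e_i = \1$, we have $e_i \otimes M = M$, so put $p_i := \lambda|_{e_i \otimes M} \colon M \to M$. Because $\eta_S = [\eta_i]_{i \in Q_0}$ with every $\eta_i = \id_{\1}$, the map $\eta_S \otimes \id_M \colon M = \1 \otimes M \to S \otimes M = \bigoplus_{i \in Q_0} M$ is the ``diagonal'' $[\id_M]_{i \in Q_0}$, and the left unital axiom $\lambda \circ (\eta_S \otimes \id_M) = \id_M$ becomes $\sum_{i \in Q_0} p_i = \id_M$. Restricting the associativity axiom $\lambda \circ (\mu_S \otimes \id_M) = \lambda \circ (\id_S \otimes \lambda)$ to the summand $e_i \otimes e_j \otimes M$, and using that $\mu_S|_{e_i \otimes e_j}$ equals $\id_{\1}$ (into $e_i$) if $i = j$ and $0$ otherwise, one side reads $\delta_{ij}\, p_i$ and the other reads $p_i \circ p_j$; hence $p_i \circ p_j = \delta_{ij}\, p_i$. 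Thus $\{p_i\}_{i \in Q_0}$ is a complete orthogonal family of idempotents.

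Next I would pass to the epi-mono factorisations. Write $\overline{p_i}$ and $\fri(p_i)$ for the epi and mono parts of $p_i = \fri(p_i) \circ \overline{p_i}$ through $e_iM := \img(p_i)$. From $p_i^2 = p_i$ and cancellation of the mono $\fri(p_i)$ on the left and the epi $\overline{p_i}$ on the right one gets $\overline{p_i} \circ \fri(p_i) = \id_{e_iM}$; from $p_i \circ p_j = 0$ for $i \neq j$ the same cancellations give $\overline{p_i} \circ \fri(p_j) = 0$. So $\overline{p_i} \circ \fri(p_j) = \delta_{ij}\,\id_{e_iM}$ for all $i,j$. Now the two maps in the statement are mutually inverse by a one-line matrix computation: $[\fri(p_i)]^T_{i \in Q_0} \circ [\overline{p_i}]_{i \in Q_0} = \sum_{i} \fri(p_i) \circ \overline{p_i} = \sum_i p_i = \id_M$, while the component of $[\overline{p_i}]_{i \in Q_0} \circ [\fri(p_j)]^T_{j \in Q_0}$ from the summand $e_jM$ to the summand $e_iM$ is $\overline{p_i} \circ \fri(p_j) = \delta_{ij}\,\id$, so that composite is $\id_{\bigoplus_i e_iM}$. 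Finally I would observe that $e_iM$ is a sub-$S$-module of $M$ on which $e_j$ acts by $\delta_{ij}\,\id$ (immediate from $p_j \circ p_i = \delta_{ij}\, p_i$ via the splitting), that $\fri(p_i)$ and $\overline{p_i}$ are therefore $S$-module morphisms, and hence so are the assembled maps; this upgrades the isomorphism to one of $S$-modules, as claimed.

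I do not anticipate a real obstacle. The only step needing a moment's care is deducing $\overline{p_i} \circ \fri(p_j) = \delta_{ij}\,\id$ from the idempotent relations, which uses nothing beyond the abelian structure of $\cC(Q)$ and the uniqueness of epi-mono factorisations fixed in the conventions; everything else is bookkeeping with the vector/matrix notation $[\,\cdot\,]$ and $[\,\cdot\,]^T$.
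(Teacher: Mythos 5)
Your proof is correct and follows essentially the same route as the paper: use the unital axiom to get $\sum_i \lambda|_{e_i\otimes M} = \id_M$, observe that these maps are mutually orthogonal idempotents, and cancel the mono and epi parts of the factorisations to deduce $\overline{\lambda|_{e_i\otimes M}} \circ \fri(\lambda|_{e_j\otimes M}) = \delta_{ij}\,\id$. You are in fact slightly more thorough than the paper, which merely asserts the orthogonal-idempotent property where you derive it from the associativity axiom restricted to $e_i \otimes e_j \otimes M$, and which leaves the $S$-module compatibility implicit.
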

\begin{proof}
By the unital property of modules, we have that $\lambda|_{S\otimes M} \circ (\eta_S \otimes \id_M) = \id_M$.
By definition we have that $\lambda|_{S\otimes M} = [\lambda|_{e_i\otimes M}]^T_{i \in Q_0}: \bigoplus_{i \in Q_0} M = \bigoplus_{i \in Q_0} e_i \otimes M \ra M$, which together give us
\[
\id_M = 
	\lambda|_{S\otimes M} \circ (\eta_S \otimes \id_M) = 
		[\lambda|_{e_i\otimes M}]^T_{i \in Q_0}[\id_M]_{i\in Q_0} =
			\sum_{i \in Q_0} \lambda|_{e_i\otimes M}
\] 
As such, we have that 
\[
\left[\fri(\lambda|_{e_i\otimes M}) \right]^T_{i\in Q_0} \circ \left[\overline{\lambda|_{e_i\otimes M}} \right]_{i \in Q_0} = \sum_{i \in Q_0} \lambda|_{e_i\otimes M} = \id_M.
\]

On the other hand, note that the self-morphisms $\lambda|_{e_i\otimes M}$ of $M$ are mutually orthogonal idempotents.
Using the epi-mono decomposition \eqref{eq: lambda epi-mono decomp}, we get
\begin{align*}
\fri(\lambda|_{e_i\otimes M}) \circ \overline{\lambda|_{e_i\otimes M}} \circ \fri(\lambda|_{e_j\otimes M}) \circ \overline{\lambda|_{e_j\otimes M}} 
	&= \lambda|_{e_i\otimes M} \lambda|_{e_j\otimes M} \\
		&= 
		\begin{cases}
		\fri(\lambda|_{e_i\otimes M})\overline{\lambda|_{e_i\otimes M}}, &\text{ if } i = j; \\
		0, &\text{ otherwise}.
		\end{cases}
\end{align*}
In either case, the monomorphism property of $\fri(\lambda|_{e_i\otimes M})$ and the epimorphism property of $\overline{\lambda|_{e_j\otimes M}}$ allow us to conclude that 
\[
\overline{\lambda|_{e_i\otimes M}} \circ \fri(\lambda|_{e_j\otimes M}) =
	\begin{cases}
	\id_{e_iM}, &\text{ if } i = j; \\
	0, &\text{ otherwise}.
	\end{cases}
\]
Hence the composition $\left[\overline{\lambda|_{e_i\otimes M}} \right]_{i \in Q_0} \circ \left[\fri(\lambda|_{e_i\otimes M}) \right]^T_{i \in Q_0}$ is indeed the identity on $\bigoplus_{i \in Q_0} e_iM$
\end{proof}


Let us now understand the $\Path(Q)$-module structure on $\bigoplus_{i \in Q_0} e_iM$ induced by the $S$-module isomorphism above.
For each arrow $\alpha \in Q_1$, let us define $\widetilde{\lambda(\alpha)}$ to be the following composition of morphisms:
\[
\bigoplus_{i \in Q_0} (p(\alpha) \otimes e_iM) = p(\alpha)\otimes \bigoplus_{i \in Q_0} e_iM \xra{\cong} p(\alpha)\otimes M \xra{\lambda|_{p(\alpha)\otimes M}} M \xra{\cong} \bigoplus_{i \in Q_0} e_iM,
\]
which we know completely determines the induced $\Path(Q)$-action map on $\bigoplus_{i \in Q_0} e_iM$:
\[
\widetilde{\lambda}: \Path(Q) \otimes \bigoplus_{i \in Q_0} e_iM \ra \bigoplus_{i \in Q_0} e_iM.
\]
By definition, the morphism $\widetilde{\lambda(\alpha)}$ is uniquely determined by each of its component morphisms $p(\alpha) \otimes e_i M \ra e_j M$ between each pair of their summands, which is given by the composition:
\[
p(\alpha) \otimes e_i M \xhookrightarrow{\id_{p(\alpha)} \otimes \fri(\lambda|_{e_i \otimes M})} p(\alpha) \otimes M \xra{\lambda|_{p(\alpha)\otimes M}} M = e_j \otimes M \xtwoheadrightarrow{\overline{\lambda|_{e_j \otimes M}}} e_j M.
\]
We shall denote each of these component morphisms by $\widetilde{\lambda(\alpha)}_{i,j}$.
\begin{lemma} \label{lem: arrow action determined by component}
Let $\alpha$ be an arrow in $Q$.
Then $\widetilde{\lambda(\alpha)}_{i, j}$ is the zero map whenever $i \neq s(\alpha)$ or $j \neq t(\alpha)$.
In other words, the single component morphism $\widetilde{\lambda(\alpha)}_{s(\alpha),t(\alpha)}$ completely determines $\widetilde{\lambda(\alpha)}$, and hence $\widetilde{\lambda}$.
\end{lemma}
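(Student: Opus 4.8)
The plan is to deduce both vanishing statements from the associativity of the $\Path(Q)$-action, played against (left and right) multiplication by the idempotent summands $e_i$ of $S\subseteq\Path(Q)$. The crucial facts, immediate from the $(S,S)$-bimodule structure on $D=\bigoplus_{\alpha\in Q_1}p(\alpha)$, are that the multiplication $\mu$ of $\Path(Q)$ restricts to $e_j\otimes p(\alpha)\ra\Path(Q)$ as $\id_{p(\alpha)}$ into the degree-$1$ summand when $j=t(\alpha)$ and as $0$ when $j\neq t(\alpha)$, and symmetrically restricts to $p(\alpha)\otimes e_i\ra\Path(Q)$ as $\id_{p(\alpha)}$ when $i=s(\alpha)$ and as $0$ when $i\neq s(\alpha)$. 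Everything then reduces to bookkeeping with the epi--mono factorisation \eqref{eq: lambda epi-mono decomp} of $\lambda|_{e_i\otimes M}$.

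First I would treat the case $j\neq t(\alpha)$. Restricting the associativity constraint of the $\Path(Q)$-module $M$ to the summand $e_j\otimes p(\alpha)\otimes M$ of $\Path(Q)\otimes\Path(Q)\otimes M$, and using $e_j=\1$ to identify $e_j\otimes p(\alpha)\otimes M\cong p(\alpha)\otimes M$ and $e_j\otimes M\cong M$, one side is $\lambda|_{e_j\otimes M}\circ\lambda|_{p(\alpha)\otimes M}$ while the other side is $\lambda$ precomposed with $\mu|_{e_j\otimes p(\alpha)}\otimes\id_M=0$. Hence $\lambda|_{e_j\otimes M}\circ\lambda|_{p(\alpha)\otimes M}=0$; substituting $\lambda|_{e_j\otimes M}=\fri(\lambda|_{e_j\otimes M})\circ\overline{\lambda|_{e_j\otimes M}}$ and cancelling the monomorphism $\fri(\lambda|_{e_j\otimes M})$ on the left gives $\overline{\lambda|_{e_j\otimes M}}\circ\lambda|_{p(\alpha)\otimes M}=0$. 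Since $\widetilde{\lambda(\alpha)}_{i,j}=\overline{\lambda|_{e_j\otimes M}}\circ\lambda|_{p(\alpha)\otimes M}\circ\bigl(\id_{p(\alpha)}\otimes\fri(\lambda|_{e_i\otimes M})\bigr)$, it vanishes for every $i$. The case $i\neq s(\alpha)$ is symmetric: restricting associativity to $p(\alpha)\otimes e_i\otimes M$ and using $\mu|_{p(\alpha)\otimes e_i}=0$ yields $\lambda|_{p(\alpha)\otimes M}\circ\bigl(\id_{p(\alpha)}\otimes\lambda|_{e_i\otimes M}\bigr)=0$; inserting the factorisation of $\lambda|_{e_i\otimes M}$ and cancelling the epimorphism $\id_{p(\alpha)}\otimes\overline{\lambda|_{e_i\otimes M}}$ on the right — legitimate because $-\otimes-$ is bi-exact in $\cC(Q)$, so tensoring an epimorphism with $\id_{p(\alpha)}$ is again an epimorphism — gives $\lambda|_{p(\alpha)\otimes M}\circ\bigl(\id_{p(\alpha)}\otimes\fri(\lambda|_{e_i\otimes M})\bigr)=0$, hence $\widetilde{\lambda(\alpha)}_{i,j}=0$ for every $j$. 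Combining the two cases, only $\widetilde{\lambda(\alpha)}_{s(\alpha),t(\alpha)}$ can be nonzero; as $\widetilde{\lambda(\alpha)}$ is determined by its component morphisms and $\widetilde{\lambda}$ is in turn assembled from the $\widetilde{\lambda(\alpha)}$ together with the tautological $S$-part, the final assertion follows.

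The only genuine hazard — and the step I expect to be the most fiddly to write cleanly — is the careful restriction of the ternary associativity constraint to the correct summand, together with the identifications $e_i\otimes(-)=(-)$ coming from $e_i=\1$; once that is in place the cancellations of the mono $\fri(\lambda|_{e_j\otimes M})$ and the epi $\id_{p(\alpha)}\otimes\overline{\lambda|_{e_i\otimes M}}$ are routine. No input is needed beyond the factorisation \eqref{eq: lambda epi-mono decomp}, bi-exactness of the tensor product in $\cC(Q)$, and the bimodule structure of $D$ over $S$.
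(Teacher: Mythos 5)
Your argument is correct and is essentially the paper's own proof: both cases are handled by restricting the associativity constraint to the summands $p(\alpha)\otimes e_i\otimes M$ and $e_j\otimes p(\alpha)\otimes M$, using that $\mu$ vanishes there unless $i=s(\alpha)$, $j=t(\alpha)$, and then cancelling the epimorphism $\id_{p(\alpha)}\otimes\overline{\lambda|_{e_i\otimes M}}$ (resp.\ the monomorphism $\fri(\lambda|_{e_j\otimes M})$) coming from the factorisation \eqref{eq: lambda epi-mono decomp}. The only difference is presentational — you write the two associativity identities as equations where the paper draws commutative diagrams — so nothing further is needed.
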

\begin{proof}
Consider the following commutative diagram
\[\begin{tikzcd}
	{p(\alpha)\otimes e_iM} && {e_j M} \\
	{p(\alpha)\otimes M} && {M = e_j \otimes M} \\
	{p(\alpha)\otimes e_i \otimes M} && {p(\alpha)\otimes M}
	\arrow["{\widetilde{\lambda(\alpha)}_{i,j}}", from=1-1, to=1-3]
	\arrow["{\id_{p(\alpha)}\otimes\mathfrak{i}(\lambda|_{e_i\otimes M})}", hook, from=1-1, to=2-1]
	\arrow["{\lambda|_{p(\alpha)\otimes M}}"', from=2-1, to=2-3]
	\arrow["{\overline{\lambda|_{e_j\otimes M}}}"', two heads, from=2-3, to=1-3]
	\arrow["{\id_{p(\alpha)}\otimes (\lambda|_{e_i\otimes M})}"', from=3-1, to=2-1]
	\arrow["{\mu|_{p(\alpha)\otimes e_i}\otimes\id_M}"', from=3-1, to=3-3]
	\arrow["{\lambda|_{p(\alpha)\otimes M}}"', from=3-3, to=2-3]
	\arrow["{\id_{p(\alpha)}\otimes(\overline{\lambda|_{e_i\otimes M}})}", shift left=3, curve={height=-30pt}, two heads, from=3-1, to=1-1]
\end{tikzcd}\]
where the bottom square is obtained from the associativity of the module action.
If $i \neq s(\alpha)$, then the bottom square is zero and by commutativity of the diagram we obtain 
\[
\widetilde{\lambda(\alpha)}_{i,j} \circ \id_{p(\alpha)}\otimes(\overline{\lambda|_{e_i\otimes M}}) = 0.
\]
Since $\id_{p(\alpha)}\otimes(\overline{\lambda|_{e_i\otimes M}})$ is an epimorphism, we get $\widetilde{\lambda(\alpha)}_{i,j} = 0$ as required.

Now consider instead the following commutative diagram
\[\begin{tikzcd}
	{p(\alpha)\otimes e_iM} && {e_j M} \\
	{p(\alpha)\otimes M} && {M = e_j \otimes M} \\
	{e_j\otimes p(\alpha)\otimes M} \\
	{p(\alpha)\otimes M} && M
	\arrow["{\widetilde{\lambda(\alpha)}_{i,j}}", from=1-1, to=1-3]
	\arrow["{\id_{p(\alpha)}\otimes\mathfrak{i}(\lambda|_{e_i\otimes M})}"', hook, from=1-1, to=2-1]
	\arrow["{\lambda|_{p(\alpha)\otimes M}}", from=2-1, to=2-3]
	\arrow["{\lambda|_{p(\alpha)\otimes M}}"', from=4-1, to=4-3]
	\arrow["{\mu|_{e_j \otimes p(\alpha)}\otimes\id_M}", from=3-1, to=4-1]
	\arrow["{=}"{marking}, draw=none, from=3-1, to=2-1]
	\arrow["{\overline{\lambda|_{e_j\otimes M}}}", two heads, from=2-3, to=1-3]
	\arrow["{\mathfrak{i}(\lambda|_{e_j\otimes M})}", shift left=4, curve={height=-30pt}, hook, from=1-3, to=4-3]
	\arrow["{\id_{e_j}\otimes\lambda|_{p(\alpha)\otimes M}}"'{pos=0.4}, shift left=1, from=3-1, to=2-3]
	\arrow["{\lambda|_{e_j\otimes M}}", from=2-3, to=4-3]
\end{tikzcd}\]
where the bottom diagram is again obtained from associativity.
If $j \neq t(\alpha)$, the bottom diagram is zero and so we get
\[
\mathfrak{i}(\lambda|_{e_j\otimes M}) \circ \widetilde{\lambda(\alpha)}_{i,j} = 0.
\]
Since $\mathfrak{i}(\lambda|_{e_j\otimes M})$ is a monomorphism, we have $\widetilde{\lambda(\alpha)}_{i,j} = 0$ as required.
\end{proof}
To conclude, we get the following:
\begin{proposition} \label{prop: arrow action determines module structure}
The $\Path(Q)$-module structure morphism 
\[
\widetilde{\lambda}: \Path(Q) \otimes \bigoplus_{i \in Q_0} e_iM \ra \bigoplus_{i \in Q_0} e_iM
\]
on  $\bigoplus_{i \in Q_0} e_iM$ induced by the $S$-module isomorphism in \cref{lem: vertex decomposition} is completely determined by the component morphisms $\widetilde{\lambda(\alpha)}_{s(\alpha),t(\alpha)}: p(\alpha) \otimes e_{s(\alpha)}M \ra e_{t(\alpha)}M$ for each arrow $\alpha \in Q_1$.
\end{proposition}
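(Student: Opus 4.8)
The plan is to bootstrap from the two preceding lemmas together with the fact — recorded in the text preceding \cref{lem: vertex decomposition} — that $\Path(Q)$ is generated as an algebra by the subobjects $S$ and $D$. Write $N := \bigoplus_{i\in Q_0} e_iM$ for brevity. What must be shown is that a $\Path(Q)$-module structure $\widetilde{\lambda}$ on $N$ extending the transported $S$-module structure of \cref{lem: vertex decomposition} is uniquely pinned down once the morphisms $\widetilde{\lambda(\alpha)}_{s(\alpha),t(\alpha)}\colon p(\alpha)\otimes e_{s(\alpha)}M \to e_{t(\alpha)}M$ are prescribed for all $\alpha \in Q_1$; equivalently, two such structures agreeing on these components coincide.

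First I would make precise the reduction to $S$ and $D$. Because $\Path(Q) = S \oplus D \oplus (D\otimes_S D)\oplus\cdots$ is the tensor algebra of $D$ over $S$, the inclusion of $S\oplus D$ into $\Path(Q)$ generates it as an algebra; concretely, the multiplication $\mu$ restricts to epimorphisms $D\otimes_S\cdots\otimes_S D \to \Path(Q)$ onto the higher summands. Hence, by the associativity axiom for a module action, the full action map $\widetilde{\lambda}\colon\Path(Q)\otimes N\to N$ is determined by its two restrictions $\widetilde{\lambda}|_{S\otimes N}$ and $\widetilde{\lambda}|_{D\otimes N}$: the action of a length-$k$ path summand $p(\alpha_k)\otimes\cdots\otimes p(\alpha_1)$ is, by an induction on $k$ using the associativity square and the $(S,S)$-bimodule structure of $D$, the iterated composite of the actions of the individual $p(\alpha_\ell)$. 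This is exactly the observation recorded before \cref{lem: vertex decomposition}.

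Next, the $S$-part carries no freedom. By construction $\widetilde{\lambda}|_{S\otimes N}$ is the $S$-module structure on $N$ obtained by transporting the canonical one on $M$ along the isomorphism of \cref{lem: vertex decomposition}; and the computation inside the proof of that lemma shows that $\overline{\lambda|_{e_i\otimes M}}\circ\fri(\lambda|_{e_j\otimes M})$ is $\id_{e_iM}$ when $i=j$ and $0$ otherwise, so $e_i$ acts on $N$ simply as the idempotent projecting onto its $i$-th summand. Thus $\widetilde{\lambda}|_{S\otimes N}$ is the same for every module structure under consideration and contributes nothing to the data. Finally, since $D = \bigoplus_{\alpha\in Q_1} p(\alpha)$, the restriction $\widetilde{\lambda}|_{D\otimes N}$ is the direct sum of the maps $\widetilde{\lambda(\alpha)}\colon p(\alpha)\otimes N\to N$, each of which is determined by its component morphisms $\widetilde{\lambda(\alpha)}_{i,j}$; and \cref{lem: arrow action determined by component} kills every one of these except $\widetilde{\lambda(\alpha)}_{s(\alpha),t(\alpha)}$. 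Combining the last two paragraphs with the first gives the claim.

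The proof is essentially bookkeeping — the two lemmas do the real work — and the only mildly delicate point is the induction in the second paragraph: one must verify that the coequaliser identifications defining the iterated $\otimes_S$ are compatible with iterating the action map, so that ``the action of a path is the composite of the actions of its arrows'' is literally correct at the level of morphisms in $\cC(Q)$. This is immediate from the associativity square together with the definition of $D\otimes_S D$ as a coequaliser, but it is the one place where a reader would want to see the diagram chased.
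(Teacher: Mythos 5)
Your proposal is correct and takes essentially the same route as the paper, which states this proposition without a separate proof precisely because it is the combination of the observation that a $\Path(Q)$-action is determined by its restrictions to $S\otimes M$ and to the $p(\alpha)\otimes M$ (via associativity and the tensor-algebra structure), the decomposition of \cref{lem: vertex decomposition}, and the vanishing of the off-components in \cref{lem: arrow action determined by component}. Your write-up merely makes explicit the bookkeeping (the forced idempotent action of $S$ and the induction on path length through the coequaliser defining $\otimes_S$) that the paper leaves implicit.
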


We arrive at the main result of this section.
\begin{theorem}\label{thm: rep and module equiv}
The categories $\rep(Q)$ and $\Path(Q)$-$\mod$ are equivalent as abelian, right module categories over $\cC(Q)$.
\end{theorem}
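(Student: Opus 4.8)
The plan is to construct a pair of mutually quasi-inverse additive functors $F$ and $G$ between $\Path(Q)\text{-}\mod$ and $\rep(Q)$, and then to upgrade the resulting equivalence to one of abelian, right $\cC(Q)$-module categories. The functor $F\colon \Path(Q)\text{-}\mod \to \rep(Q)$ is essentially already assembled from the preceding lemmas: given a left $\Path(Q)$-module $(M,\lambda)$, restrict it to an $S$-module and invoke \cref{lem: vertex decomposition} to identify $M$ with $\bigoplus_{i\in Q_0}e_iM$; set $F(M)_i := e_iM$, and for each arrow $\alpha$ set $F(M)_\alpha := \widetilde{\lambda(\alpha)}_{s(\alpha),t(\alpha)}\colon p(\alpha)\otimes e_{s(\alpha)}M \to e_{t(\alpha)}M$, which is precisely a datum of the shape required in \cref{defn: coxeter quiver representation} since $p(\alpha)=\nPi[v(\alpha)]_{v(\alpha)-3}$. \Cref{prop: arrow action determines module structure} guarantees that these component morphisms lose no information. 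On morphisms, a $\Path(Q)$-module map $g\colon M\to N$ is in particular an $S$-module map, hence by \cref{lem: vertex decomposition} splits as a direct sum of maps $g_i\colon e_iM\to e_iN$; that the $g_i$ form a morphism of representations is a short diagram chase from the compatibility of $g$ with the restrictions $\lambda|_{p(\alpha)\otimes M}$, using \cref{lem: arrow action determined by component}. Functoriality is then immediate.

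For $G\colon \rep(Q)\to\Path(Q)\text{-}\mod$, the conceptual observation is that $\Path(Q)$ is by \cref{defn: path alg} the tensor algebra of the $S$-bimodule $D$ over the semisimple algebra $S$, so by the universal property of the tensor algebra a left $\Path(Q)$-module is the same datum as a left $S$-module $M$ together with a single morphism of left $S$-modules $D\otimes_S M\to M$, subject to no further constraint. Combining \cref{lem: vertex decomposition} (left $S$-modules $\leftrightarrow$ tuples $(M_i)_{i\in Q_0}$ of objects of $\cC(Q)$) with the computation $D\otimes_S M\cong\bigoplus_{\alpha\in Q_1}p(\alpha)\otimes M_{s(\alpha)}$, where the $\alpha$-summand is an $e_{t(\alpha)}$-module (the one-step instance of the bimodule calculation stated just before \cref{defn: path alg}), such a morphism is exactly a family $\big(\phi_\alpha\colon p(\alpha)\otimes M_{s(\alpha)}\to M_{t(\alpha)}\big)_{\alpha\in Q_1}$, i.e.\ a representation of $Q$. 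Thus I would define $G(V):=\bigoplus_{i\in Q_0}V_i$ with the $\Path(Q)$-action whose restriction to $e_i$ is the projection-inclusion idempotent onto $V_i$, whose restriction to $p(\alpha)$ is $V_\alpha$, and which is extended to the higher $\otimes_S$-powers of $D$ by composing the $V_\alpha$ along paths (well defined and associative because $\cC(Q)$ is strict); $G$ acts summandwise on morphisms. One may either read off well-definedness from the tensor-algebra generality just quoted, or write the action down and verify the associativity and unit axioms of \cref{defn: modules over algebra} by hand, which is routine.

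It then remains to produce natural isomorphisms $FG\cong\id_{\rep(Q)}$ and $GF\cong\id_{\Path(Q)\text{-}\mod}$. The first is immediate from the construction: for $V\in\rep(Q)$ one computes $e_i(G(V))=V_i$ and $\widetilde{\lambda(\alpha)}_{s(\alpha),t(\alpha)}=V_\alpha$ on the nose, so $FG(V)=V$. For the second, given $(M,\lambda)$ the module $GF(M)$ carries on $\bigoplus_{i\in Q_0}e_iM$ precisely the action reconstructed from the component morphisms $\widetilde{\lambda(\alpha)}_{s(\alpha),t(\alpha)}$; the mutually inverse $S$-module maps of \cref{lem: vertex decomposition}, namely $\left[\fri(\lambda|_{e_i\otimes M})\right]^T_{i\in Q_0}\colon \bigoplus_{i\in Q_0}e_iM\to M$ and its inverse, intertwine this reconstructed action with $\lambda$ by \cref{prop: arrow action determines module structure}, hence give an isomorphism of $\Path(Q)$-modules, natural in $M$ since the decomposition is. Being additive and mutually quasi-inverse, $F$ and $G$ are equivalences; as both $\rep(Q)$ and $\Path(Q)\text{-}\mod$ are abelian (kernels and cokernels of $\Path(Q)$-module maps are formed in $\cC(Q)$ and inherit the module structure, exactly as for $\rep(Q)$), this is automatically an equivalence of abelian categories.

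Finally I would check that $F$ is a functor of right $\cC(Q)$-module categories. The right action on $\Path(Q)\text{-}\mod$ is $M\mapsto M\otimes A$ as recalled after \cref{defn: modules over algebra}, and on $\rep(Q)$ it is componentwise right tensoring, as recalled after \cref{defn: coxeter quiver representation}. Because $\lambda|_{e_i\otimes(M\otimes A)}=\lambda|_{e_i\otimes M}\otimes\id_A$ one gets $e_i(M\otimes A)=(e_iM)\otimes A$, and similarly the arrow-component morphisms of $M\otimes A$ are those of $M$ tensored with $\id_A$; hence $F(M\otimes A)=F(M)\otimes A$, and since $\cC(Q)$ is strict the coherence data on both sides are trivial, so $F$ together with the identity natural transformations is the desired equivalence of right $\cC(Q)$-module categories. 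The only input in the whole argument that is not purely formal is the identification, inside the construction of $G$, of a $\Path(Q)$-module structure with a representation -- equivalently, the associativity of the path action -- and that is exactly what the tensor-algebra universal property plus \cref{lem: vertex decomposition}, \cref{lem: arrow action determined by component} and \cref{prop: arrow action determines module structure} deliver; so I do not anticipate a real obstacle, only careful bookkeeping.
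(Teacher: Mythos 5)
Your proposal is correct and follows essentially the same route as the paper: it uses the same pair of functors (the paper's $\cF$ and $\cG$ are your $G$ and $F$), built on \cref{lem: vertex decomposition}, \cref{lem: arrow action determined by component} and \cref{prop: arrow action determines module structure}. The only differences are cosmetic — you conclude by exhibiting explicit quasi-inverses (streamlining the well-definedness of $G$ via the tensor-algebra universal property), whereas the paper verifies essential surjectivity and then deduces fullness from a dimension count on hom-spaces.
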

\begin{proof}
We shall construct two functors 
\[
\cF: \rep(Q) \leftrightarrows \Path(Q)\text{-}\mod : \cG
\]
and use $\cG$ to show that $\cF$ is essentially surjective and fully faithful.

First suppose we are given $V$ a representation of $Q$.
We define a $\Path(Q)$-module $M:= \cF(V)$ as follows.
As an object we have $M = \bigoplus_{i \in Q_0} V_i$, with the action morphism $\lambda: \Path(Q) \otimes M \ra M$ defined as follows:
\begin{itemize}
\item for each $i\in Q_0$, the component morphism $e_i \otimes V_i \ra V_i$ is the identity morphism;
\item for each admissible path of arrows $\alpha_{n} \circ \alpha_{n-1} \circ \cdots \circ \alpha_1$ so that 
\[
p(\alpha_n) \otimes p(\alpha_{n-1}) \otimes \cdots \otimes p(\alpha_1) \text{ is a summand of } \overbrace{D \otimes_S D \otimes_S \cdots \otimes_S D}^{n \text{ times}},
\]
the component morphism $p(\alpha_n) \otimes p(\alpha_{n-1}) \otimes \cdots \otimes p(\alpha_1) \otimes V_{s(\alpha_1)} \ra V_{t{\alpha_2}}$ is defined by the composition of morphisms $V_{\alpha_{n}} \circ (\id \otimes V_{\alpha_{n-1}}) \circ \cdots \circ (\id^{\otimes n-1} \otimes V_{\alpha_{1}})$; and
\item all other component morphisms are zero.
\end{itemize}
It is easy to check the required associative and unital properties.
Given a morphism of representations $f: V \ra W$, we define $\cF(f)$ as the direct sum of the morphisms $f_i: V_i \ra W_i$; namely the component morphisms $\cF(f)_{i,j}: V_i \ra W_j$ is the morphism $f_i$ when $i=j$ and 0 otherwise.
The fact that this defines a morphism of $\Path(Q)$-modules follows from the commutativity property $W_\alpha \circ f_i = V_{\alpha} \circ f_j$ for each arrow $i \xra{v(\alpha)} j$.

Now suppose $M$ is a module over $\Path(Q)$.
We define $V:= \cG(M)$ a representation of $Q$ as follows.
On each vertex $i \in Q_0$, set $V_i := e_iM$ with $e_iM$ as in \eqref{eqn: eiM}.
For each arrow $i \xra{v(\alpha)} j$ in $Q$, we set $V_{\alpha} = \widetilde{\lambda(\alpha)}_{i,j}$ as in \cref{lem: arrow action determined by component}.
Given a morphism of $\Path(Q)$-modules $\varphi: (M,\lambda) \ra (N, \lambda')$, we define $f := \cG(\varphi)$ as follows:
for each vertex $i \in Q_0$, $f_i$ is the following composition of morphisms:
\[
e_iM \xhookrightarrow{\fri(\lambda|_{e_i\otimes M})} M \xra{\varphi} N \xtwoheadrightarrow{\overline{\lambda'|_{e_i\otimes N}}} e_iN.
\]
The fact that this defines a morphism of representations follows from $\varphi$ being a morphism of $\Path(Q)$-modules combined with \cref{lem: arrow action determined by component}.

We now show that $\cF$ is essentially surjective and fully faithful.
By \cref{prop: arrow action determines module structure}, we have an isomorphism of $\Path(Q)$-modules
\[
\cF(\cG(M)) = \bigoplus_{i \in Q_0} e_iM \cong M
\]
for every $\Path(Q)$-module $M$, which shows that $\cF$ is essentially surjective.
It is clear from the definition that $\cF(f)=0$ implies $f=0$ for any morphism of representations $f: V \ra W$.
Since our hom spaces are vector spaces (over $\bbC$), this implies that $\cF$ is a faithful functor.
Moreover, this gives us an inequality of dimensions: 
\begin{equation} \label{eq: hom space inequality}
\dim_{\bbC}( \Hom_{\rep(Q)}(V,W)) \leq \dim_{\bbC}(\Hom_{\Path(Q)\text{-}\mod}(\cF(V), \cF(W))).
\end{equation}
As such, fullness of $\cF$ follows from showing that the inequality above is indeed an equality.
To this end, note that the functor $\cG$ also defines an injective morphism of vector spaces 
\[
\Hom_{\Path(Q)\text{-}\mod}(\cF(V), \cF(W)) \xhookrightarrow{\cG} \Hom_{\rep(Q)}(\cG(\cF(V)), \cG(\cF(W)))
\]
We leave it to the reader to check that $\cG(\cF(V)) \cong V$ and $\cG(\cF(W)) \cong W$ in $\rep(Q)$, which implies that \eqref{eq: hom space inequality} is in fact an equality, as required.

The fact that $\cF$ (and $\cG$) is exact follows directly from the definition of the abelian structure on $\rep(Q)$ and the definition of $\cF$ on morphisms.
It is also clear from the definition that both functors respect the right module category structures.
\end{proof}

\begin{remark}\label{rmk: module of fusion ring}
Note that the path algebra $\Path(Q)$ is almost never commutative (except in the trivial cases).
As such, even when $\cC(Q) \cong \TLJ_n$ for some fixed $n$, $\Path(Q)$ does not fall into the classification appearing in \cite{kirillov_ostrik_2002, ostrik_2003}.
Moreover, $\Path(Q)$ -- as with classical path algebras -- are almost never semi-simple (i.e. their category of modules are almost never semi-simple).
Nonetheless, it is easy to see that when $\cC(Q) \cong \TLJ_n$ for some fixed $n$, $K_0(\Path(Q)$-$\mod)$ falls into the classification of Etingof--Khovanov \cite[Theorem 3.4]{etinkho_95} as the direct sum of type $A$ modules over $K_0(\TLJ_n)$.
Namely, $K_0(\Path(Q)$-$\mod) \cong K_0(\rep(Q))$ is a free $K_0(\TLJ_n)$-module generated by the simple representations at each vertex.
\end{remark}

\section{Further remarks}\label{sec: further remarks}
We end this paper with a few further remarks on related works and possible extensions.
\subsection{The Coxeter groups of \texorpdfstring{$Q$ and $\check{Q}$}{Q and its unfolding}}
If $i \in Q_0$ is a sink (resp. source) in $Q$, then so is $(A,i) \in \check{Q}_0$ for each $A \in \Irr(\cC(Q))$.
In particular, it is clear that
\begin{equation}\label{eqn: reflect quiver and unfold}
\check{(\sigma_i Q)} = \prod_{A \in \Irr(\cC(Q))} \sigma_{(A,i)}\check{Q},
\end{equation}
where the order of the product does not matter since the vertices $(A,i)$ are all pairwise disjoint.

Note that $\sigma_i \mapsto \prod_{A \in \Irr(\cC(Q))} \sigma_{(A,i)}$ is also exactly the defining injective homomorphism induced by the (un)folding of the underlying Coxeter graphs (as given in \cite[Corollary 3.3]{Lusz83}  or equivalently \cite[Proposition 2.3]{Crisp99}).
This relation can be lifted to the functorial level on representations of quivers as follows.
With $S^\pm_{(A,i)}$  denoting the usual reflection functors on sink (resp. source) of classical quivers and  $\cU$ denote the equivalence defined in the proof of \cref{thm: rep Q and checkQ equiv}, the following diagram is commutative (up to natural isomorphisms):
\[
\begin{tikzcd}[column sep = 4cm]
	{\rep(Q)} & {\rep(\sigma_iQ)} \\
	{\rep(\check{Q})} & {\rep(\prod_{A\in\Irr(\cC(Q))}\sigma_{(A,i)}\check{Q}).}
	\arrow["{R^\pm_i}", from=1-1, to=1-2]
	\arrow["\cU"', from=1-2, to=2-2]
	\arrow["\cU"', from=1-1, to=2-1]
	\arrow["{\prod_{A\in\Irr(\cC(Q))} S^\pm_{(A,i)}}", from=2-1, to=2-2]
\end{tikzcd}
\]
In other words, as in the classical theory, the representations of finite-type Coxeter quivers can be viewed as a (categorical) realisation of Coxeter root systems over commutative rings in the sense of  \cite{Dyer09}, providing categorical interpretation of some of the results in loc. cit.
\begin{remark} 
The commutative diagram above also implies that $R^-_i$ is a left adjoint of $R^+_i$.
\end{remark}

\subsection{Representations in other fusion categories and module categories}
It is intriguing to find out what other fusion categories can be considered as ``natural candidates'' for a nice theory of generalised quiver representations.
Even if we focus on Coxeter quivers, certain choices can be changed without affecting the end results.
For example, in the odd $n$ cases, one could've used $\cC_n = \TLJ_n$ instead of just the even part; the reason that we took the even part is so that when $n=3$ we get the category (equivalent to) $\vec_{\bbC}$.
Moreover, instead of tensoring with $\nPi_{n-3}$ for the arrows labelled by $n$, we could instead tensor by $\nPi_1$.

On labels $n=12$ and $n=30$, we could also use the type $E_6$ and type $E_8$ fusion categories in place of $\TLJ_{12}$ (type $A_{11}$) and $\TLJ_{30}$ (type $A_{29}$) respectively, where $\nPi_{n-3}$ is replaced by the generating object of the respective category (see \cite[Section 2.3]{Edie-Michell20} and references therein for a description of the $E_6$ and $E_8$ fusion categories).
In these cases, the category of representations of the $I_2(12)$ and $I_2(30)$ Coxeter quivers will be equivalent to the category of representations of the classical quiver whose underlying graph is of type $E_6$ and type $E_8$ respectively.

One could also try and work over positive characteristic using the Verlinde category introduced by Ostrik in \cite{ostrik_2020} (note that we never used the braiding structure in the fusion categories that we have considered).

Just as quivers are useful in studying finite-dimensional algebras, \cref{thm: rep and module equiv} also suggests that it could be worth exploring the usage of quivers in understanding algebras in fusion categories, which could also lead to the understanding of (finite) module categories over fusion categories.

\subsection{Relation to cluster algebras and their categorifications} \label{subsec: cluster and folding}
In the process of working on this paper, the author came across the paper of Duffield--Tumarkin \cite{DufTumarkin22}, which approaches a related problem directly from the representations of classical quivers.
Their motivation is different from ours; the aim was to categorify certain cluster algebra combinatorics and they focus purely on the folding of finite type quivers $\check{Q}$ to obtain induced semi-ring actions on $\rep(\check{Q})$.
The semi-ring they consider agrees with the positive part of our fusion ring $K_0(\cC(Q))$.
Moreover, their semi-ring action on $\rep(\check{Q})$ can indeed be lifted to an action of $\cC(Q)$, which coincides with our induced action of $\cC(Q)$ on $\rep(\check{Q})$ through the equivalence $\rep(Q) \cong \rep(\check{Q})$ in \cref{thm: rep Q and checkQ equiv}.
Compare their Theorem 1.1 and Theorem 1.2 with our \cref{thm: gabriel root} and \cref{thm: rep Q and checkQ equiv} respectively.

Due to the incompetence of the author in the theory of cluster algebras, we shall not comment further on the (possible) relevance of our results in this paper with cluster combinatorics, where the interested reader are referred to \cite{DufTumarkin22} for the finite type cases.

\printbibliography

\end{document}